\documentclass[reqno,10pt]{amsart}

\usepackage[noadjust]{cite}
\usepackage{amssymb,amsmath,amsthm,}
\usepackage{amscd}
\usepackage{amsfonts}
\usepackage{amssymb}
\usepackage{latexsym}
\usepackage{color}
\usepackage{esint}
\usepackage{graphicx}
\usepackage{caption}
\usepackage{subcaption}
\usepackage[normalem]{ulem}
\usepackage{amsthm}
\usepackage{verbatim}
\usepackage{MnSymbol} 
\usepackage[colorlinks]{hyperref}

\usepackage[makeroom]{cancel}
\usepackage{mathtools}
\usepackage{pgfplots}

\let\hide\iffalse

\newtheorem{theorem}{Theorem}[section]

\newtheorem{corollary}[theorem]{Corollary}

\newtheorem{lemma}[theorem]{Lemma}
\newtheorem{proposition}[theorem]{Proposition}
\newtheorem{remark}[theorem]{Remark}

\let\e=\varepsilon

\let\p=\partial

\newcommand{\R}{\mathbb{R}}
\newcommand{\T}{\mathbb{T}}

\renewcommand{\S}{\mathbb{S}}

\newcommand{\be}{\begin{equation}}
	\newcommand{\bm}{\begin{multline}}
		\newcommand{\ee}{\end{equation}}

	\newcommand{\Bes}{\begin{eqnarray*}}
		\newcommand{\Ees}{\end{eqnarray*}}
	\newcommand{\Be}{\begin{equation}}
		\newcommand{\Ee}{\end{equation}}
	
	\def\p{\partial}

	\def\R{\mathbb{R}}
	
	\def\B{\begin{equation}}
		\def\E{\end{equation}}
	\def\BN{\begin{eqnarray*}}
		\def\EN{\end{eqnarray*}}
	
	\numberwithin{equation}{section}
	
	\usepackage{tikz-cd}
	\usepackage{ragged2e}
	\usepackage{etoolbox}
	\usepackage{lipsum}
	\usepackage[normalem]{ulem}
	\usepackage{cancel}
	\usepackage{color}

	\allowdisplaybreaks

\begin{document}
	\title[Large-amplitude Multi-species Boltzmann equation]{Global Well-posedness for the Multi-Species Boltzmann Equation with Large Amplitude Initial Data}
	
	\author{Gyounghun Ko}
\address{Academy of Mathematics and Systems Science, Chinese Academy of Sciences, Beijing, 100190, China}
\email{gyeonghungo@amss.ac.cn}

	\author{Myeong-Su Lee} 
	\address{Research Institute of Mathematics, Seoul National University}
 	\email{msl3573@snu.ac.kr}
	
	\author{Sung-Jun Son}
	\address{Center for Mathematical Machine Learning and its Applications (CM2LA), Department of Mathematics, Pohang University of Science and Technology} 
	\email{sungjun129@postech.ac.kr}
	\begin{abstract}	
This paper establishes the global well-posedness of the multi-species Boltzmann equation with large-amplitude initial data in the periodic domain $\mathbb{T}^3$. In contrast to the single-species case, the multi-species mixture model lacks structural symmetry in its collision operators due to the distinct masses of different species. This asymmetry makes it difficult to obtain pointwise estimates for the nonlinear collision terms. Although the Carleman representation for the mixture model introduced in \cite{BD2016} provides a useful reduction of the collision integral, it does not directly yield the desired estimate. To overcome this difficulty, we identify an additional algebraic cancellation structure which leads to the pointwise estimates for the nonlinear terms. By applying this refined approach, we derive the necessary velocity-weighted $L^\infty$ estimates for the nonlinear terms. Furthermore, under the smallness assumption on the initial relative entropy, we establish a uniform lower bound for the nonlinear collision frequency and prove that the large-amplitude solutions exist globally in time and decay exponentially to the global equilibrium.
	\end{abstract}
	\maketitle
	\tableofcontents
	\section{Introduction} 
	The Boltzmann equation is one of the fundamental kinetic models describing collisional particle systems in a statistical manner. In the context of dilute gases, it describes the evolution of the molecular distribution function over time. However, many physical systems of practical interest consist of mixtures of several interacting species rather than a single gas component. In such cases, the kinetic description requires a system of Boltzmann equations, where each species is characterized by its own distribution function and interacts with other species through cross-collision operators. The presence of multiple species with different molecular masses and collision mechanisms leads to a substantially more complex mathematical structure, particularly with respect to conservation laws and entropy dissipation. As a consequence, analytical techniques developed for the monatomic Boltzmann equation do not directly extend to the setting of gas mixtures. From a mathematical perspective, the analysis of multi-species Boltzmann systems is further complicated by the strong coupling induced by cross-collision operators and the asymmetry of the particle masses.
	
	Motivated by this issue, we study the initial value problem for the multi-species Boltzmann equation. We consider a gas mixture in the periodic domain $\mathbb{T}^3$ with velocity variable $v \in \mathbb{R}^3$ and time $t \ge 0$. The multi-species Boltzmann equation for $N$ species is given by 
	\begin{equation}\label{mixed bol}
		\partial_{t}F_{i} + v \cdot \nabla_{x}F_{i} = \sum_{j=1}^{N} Q_{ij}(F_{i}, F_{j}), \quad \forall i \in \{1,\cdots, N\},
	\end{equation}
	where $F_i=F_i(t, x, v) \ge 0$ denotes the distribution function of the $i$-th species. The collision operator $Q_{ij}(F_i, F_j)$ is defined as the integral:
	\begin{equation*}
		Q_{ij}(F_i, F_j)(v) = \int_{\R^3 \times \S^2} B_{ij}(v-v_*, \sigma) [F_i(v')F_j(v'_*) - F_i(v)F_j(v_*)] d\sigma dv_*, 
	\end{equation*}
	where $v, v_*$ are the pre-collision velocities and $v', v'_*$ are the post-collisional velocities. For a binary collision between a particle of species $i$ with mass $m_i$ and a particle of species $j$ with mass $m_j$, the post-collisional velocities $(v', v'_*)$ are determined by the conservation of momentum and energy:
	\begin{align*}
		m_i v + m_j v_* &= m_i v' + m_j v'_*, \\
		\frac{1}{2}m_i |v|^2 + \frac{1}{2}m_j |v_*|^2 &= \frac{1}{2}m_i |v'|^2 + \frac{1}{2}m_j |v'_*|^2.
	\end{align*}
	More explicitly, they can be written as: 
	\begin{align} \label{post-collision v}
		v' &= \frac{1}{m_i+m_j}(m_iv + m_j v_* +m_j|v-v_*|\sigma) \quad \textrm{and} \quad 
		v'_* &= \frac{1}{m_i+m_j}(m_iv + m_j v_* -m_i|v-v_*|\sigma),
	\end{align}
	where $\sigma \in \S^2$ is the unit scattering vector. Following the paper \cite{BD2016}, we assume that the collision kernel $B_{ij}$ depends only on $|v-v_*|$ and the scattering angle $\theta$. We consider collision kernels satisfying Grad's angular cutoff assumption of the form
	\begin{equation*}
		B_{ij}(v-v_*,\sigma)=B_{ij}(|v-v_*|, \cos \theta) = \Phi_{ij}(|v-v_*|) b_{ij}(\cos \theta),
	\end{equation*}
	where the scattering angle $\theta \in [0, \pi]$ is defined through
	\begin{equation*}
		\cos \theta = \left\langle \frac{v-v_*}{|v-v_*|}, \sigma \right\rangle.
	\end{equation*}
	Moreover, we assume the following conditions for all $i,j \in \{1, \cdots , N\}$:
	\begin{align} \label{kernel}
		&(1) \Phi_{ij}(|z|) = C^\Phi_{ij} |z|^\gamma, \quad C_{ij}^{\Phi} >0, \;\gamma \in [0, 1]\;  \textrm{(hard potentials)}.\cr
		&(2) b_{ij}(\cos \theta) \textrm{ is a nonnegative measurable function satisfying } 0 \le b_{ij}(\cos \theta) \le C^b |\cos \theta|.\cr
		&(3) \textrm{The kernels satisfy symmetry condition} \; B_{ij} = B_{ji}.
	\end{align}
	The collision operator $Q_{ij}(F_i, F_j)$ can be decomposed into the gain and loss parts as $Q_{ij} = Q_{ij}^+ - Q_{ij}^-$, where
	\begin{align*}
		Q_{ij}^+(F_i, F_j) &= \int_{\R^3 \times \S^2} B_{ij}(v-v_*, \sigma) F_i(v') F_j(v'_*) d\sigma dv_*, \\
		Q_{ij}^-(F_i, F_j) &= \int_{\R^3 \times \S^2} B_{ij}(v-v_*, \sigma) F_i(v) F_j(v_*) d\sigma  dv_*.
	\end{align*}
	
\subsection{Notation}
	To avoid ambiguity in the multi-species setting, we summarize below several conventions that will be used throughout the paper.
	\begin{itemize}
		\item We denote a vector-valued function by $\pmb{f} = (f_1, \dots, f_N)$.	
		\item All species masses are assumed to be positive and fixed, so that any dependence on the masses can be absorbed into generic constants.
		\item  We use the notation $A(\pmb{f})$ to denote a scalar-valued operator depending on $\pmb{f} = (f_1,\dots,f_N)$, that is, its value depend on all components of $\pmb{f}$.		
		\item Generic positive constants, denoted by $C$, may vary from line to line. Their dependence on parameters will be indicated when relevant (e.g., $C_\delta$). 		
		\item For vector-valued functions $\pmb{f}=(f_1,\cdots, f_N)$ and $\pmb{g}=(g_1,\cdots, g_N)$, we define 
		\[
		\pmb{f}(v)\cdot \pmb{g}(v) := \sum_{j=1}^N f_j(v)\,g_j(v)
		\]
		and 
		\[
		\langle \pmb{f}, \pmb{g} \rangle_{L^2_v}
		:= \int_{\mathbb{R}^3} \pmb{f}(v)\cdot \pmb{g}(v)\,dv
		= \sum_{j=1}^N \int_{\mathbb{R}^3} f_j(v)\,g_j(v)\,dv.
		\]
		\item For $\pmb{F}=(F_1,\cdots, F_N)$, we define the $L^\infty$ norm by
\[
\|\pmb{F}\|_{L^\infty} := \sum_{i=1}^N \|F_i\|_{L^\infty},
\]
where 
\[
\|F_i\|_{L^\infty} := \operatorname*{ess\,sup}_{x \in \mathbb{T}^3,\, v \in \mathbb{R}^3} |F_i(x,v)|.
\]
More generally, for $1 \le p \le \infty$, the $L^p$ norms are taken with respect to $(x,v)$ unless otherwise specified. 
When the domain of integration is restricted, we indicate it explicitly; for instance, 
$\|\cdot\|_{L^p_v}$ denotes the $L^p$ norm taken only with respect to the velocity variable $v$.		
\item We define the velocity weight function 
\begin{align} \label{v_weight} 
w(v) := \langle v \rangle^q, \quad \textrm{for} \;  q>4,
\end{align}
where $\langle v \rangle := (1+|v|^2)^{1/2}$ denotes the Japanese bracket. 
For $1 \le p \le \infty$, we define the weighted norm by
\[
\|f\|_{L^p(w)} := \| w f \|_{L^p}.
\]
	\end{itemize}

\subsection{Conservation laws and H-theorem}
The structure of the multi-species Boltzmann equation is determined by the collision operator $Q_{ij}$ and the associated entropy dissipation. For any test functions $\pmb{\psi}(v) = (\psi_1, \dots, \psi_N)$, we have the following weak formulation:
\begin{align*}
	&\sum_{i,j=1}^N \int_{\mathbb{R}^3} Q_{ij}(F_i, F_j) \psi_i(v) \, dv \\
	&= \frac{1}{4} \sum_{i,j=1}^N \int_{ \mathbb{R}^3  \times \mathbb{R}^3\times \S^2} B_{ij}(v-v_*,\sigma) [F_i(v') F_j(v'_*) - F_i(v) F_j(v_*)] [\psi_i + \psi_{j,*} - \psi_i' - \psi_{j,*}'] \, d\sigma dv_* dv,
\end{align*}
where $\psi_i=\psi_i(v), \psi_{j,*}=\psi_{j}(v_*), \psi_i'=\psi_i(v'),$ and $\psi_{j,*}'=\psi_{j}(v'_*)$. By choosing $\pmb{\psi}$ such that 
\begin{align*}
\psi_i + \psi_{j,*} = \psi_i' + \psi_{j,*}', \quad \forall i,j \in \{1, \cdots, N \},
\end{align*}
we obtain the collision invariants, which lead to the following conservation laws:
\begin{align*}
	 \pmb{\psi} = (\delta_{1k}, \dots, \delta_{Nk}) &\rightarrow \frac{d}{dt} \int_{\mathbb{T}^3 \times \mathbb{R}^3} F_k \, dvdx= 0, \quad \forall k \in \{1, \dots, N\}, \\
	\pmb{\psi} = (m_1 v, \dots, m_N v) &\rightarrow \frac{d}{dt} \sum_{i=1}^N \int_{\mathbb{T}^3 \times \mathbb{R}^3} m_i v F_i \, dvdx = 0, \\ 
	 \pmb{\psi} = (\frac{1}{2} m_1 |v|^2, \dots, \frac{1}{2} m_N |v|^2) &\rightarrow \frac{d}{dt} \sum_{i=1}^N \int_{\mathbb{T}^3 \times \mathbb{R}^3} \frac{1}{2} m_i |v|^2 F_i \, dvdx = 0.
\end{align*}
This yields conservation of the total number density $n_{\infty,i}$ of each species, of the total momentum $\rho_{\infty}u_{\infty}$, and its total energy $\frac{3}{2} \rho_{\infty} T_{\infty}$: for all $t \geq0$, 
\begin{align*}
	n_{\infty,i} &= \int_{\mathbb{T}^3 \times \mathbb{R}^3} F_{i}(t,x,v) \, dvdx, \\
	\rho_\infty u_\infty &= \sum_{i=1}^N \int_{\mathbb{T}^3 \times \mathbb{R}^3} m_i v F_{i}(t,x,v)\, dvdx, \\
	3\rho_\infty T_\infty &= \sum_{i=1}^N \int_{\mathbb{T}^3 \times \mathbb{R}^3} m_i |v - u_\infty|^2 F_{i}(t,x,v) \, dvdx,
\end{align*}
where $\rho_\infty= \sum_{i=1}^N m_i n_{\infty,i}$ denotes the global density of the gas.\\
\indent The entropy structure plays a crucial role in the relaxation of the multi-species Boltzmann system. The total entropy $E(\pmb{F})$ is defined by
\begin{equation} \label{entropy}
	E(\pmb{F}) := \sum_{i=1}^{N} \int_{\mathbb{T}^3 \times \mathbb{R}^3} F_i \log F_i \, dvdx.
\end{equation}
The H-theorem for mixtures states that the entropy production is non-positive:
\begin{equation*}
	\frac{d}{dt}E(\pmb{F}) = D(\pmb{F}) \le 0.
\end{equation*}
Moreover, the entropy production $D(\pmb{F})$ vanishes if and only if each $F_i$ takes the form of a local Maxwellian: 
\begin{equation*}
	M_i(t,x,v) = n_i(t,x) \left( \frac{m_i}{2\pi k_B T(t,x)} \right)^{3/2} \exp \left( -\frac{m_i |v - u(t,x)|^2}{2 k_B T(t,x)} \right),
\end{equation*}
where $n_i$ denotes the local number density of species $i$, while $u$ and $T$ are the common local bulk velocity and temperature, respectively. In the periodic box $\T^3$, the multi-species H-theorem ensures that the stationary solution to \eqref{mixed bol} associated with the initial data is uniquely given by a global Maxwellian $\pmb{\mu} = (\mu_1, \dots, \mu_N)$. The general form of the global Maxwellian is given by
\begin{equation*}
	\mu_i(v) = n_{\infty,i} \left( \frac{m_i}{2\pi k_B T_\infty} \right)^{3/2} \exp \left( -\frac{m_i |v - u_\infty|^2}{2 k_B T_\infty} \right).
\end{equation*}
Without loss of generality, we normalize the global equilibrium such that the gas is at rest and the temperature is unity. Specifically, we set the global momentum $u_\infty = 0$ and the temperature $k_B T_\infty = 1$. Under these physical constraints, the global Maxwellian reduces to
\begin{equation} \label{global maxwellian}
	\mu_i(v) = n_{\infty, i} \left( \frac{m_i}{2\pi} \right)^{3/2} \exp \left( -m_i \frac{|v|^2}{2} \right).
\end{equation}
This centered Maxwellian $\mu_i$ serves as the reference state for our perturbation analysis, satisfying $Q_{ij}(\mu_i, \mu_j) = 0$ for all $i,j \in \{1, \cdots, N \}$. To study the convergence of solutions toward the global equilibrium $\pmb{\mu}$, we introduce the relative entropy (or Kullback-Leibler divergence), defined as: 
\begin{align} \label{relative entropy}
 \mathcal{E}(\pmb{F})=\mathcal{E}(\pmb{F})(t) := E(\pmb{F}) - E(\pmb{\mu}) = \sum_{i=1}^N \int_{\T^3 \times \R^3} F_i \log F_i -\mu_i \log \mu_i dvdx, 
\end{align} 
where $\mu_i$ is defined in \eqref{global maxwellian}.

\subsection{Perturbation regime}
Under the perturbative regime, we define the solution as $F_i = \mu_i + \sqrt{\mu_i} f_i$, where $\mu_i$ is the global Maxwellian \eqref{global maxwellian} for each species $i$. The perturbed multi-species Boltzmann equation can be expressed as
\begin{equation}\label{refor Bol}
	\partial_{t}f_i + v \cdot \nabla_{x}f_i = L_i(\pmb{f}) + \Gamma_i(\pmb{f}), \quad \forall i \in \{1,\cdots, N\}, 
\end{equation}
where $L_i(\pmb{f})$ is the linearized collision operator and $\Gamma_i(\pmb{f})$ is the nonlinear part. The linearized operator $\pmb{L} = (L_1, \dots, L_N)$ is defined by

\begin{equation*}
	L_i(\pmb{f}) = \sum_{j=1}^N L_{ij}(\pmb{f}) = \sum_{j=1}^N \frac{1}{\sqrt{\mu_i}} \left[ Q_{ij}(\mu_i, \sqrt{\mu_j}f_j) + Q_{ij}(\sqrt{\mu_i}f_i, \mu_j) \right] = -\nu_i(v)f_i + K_i(\pmb{f}).
\end{equation*}
Specifically, the linearized collision operator $L_i(\pmb{f})$ is decomposed into collision frequency term $-\nu_i(v)f_i$ and the integral operator $K_i(\pmb{f})$. The collision frequency for each species $i$ is defined as 
\begin{equation} \label{nu_def}
	\nu_i(v) = \sum_{j=1}^N \nu_{ij}(v)= \sum_{j=1}^N \frac{1}{\sqrt{\mu_i}} Q_{ij}^{-}(\sqrt{\mu_i}, \mu_j) = \sum_{j=1}^N \int_{\R^3 \times \S^2} B_{ij}(v-v_*, \sigma) \mu_j(v_*) \, d\sigma dv_* .
\end{equation}
The integral operator $K_i(\pmb{f})$ is defined by
\begin{align} \label{def_K}
K_i(\pmb{f}) := \sum_{j=1}^N 	K_{ij}(\pmb{f})(v),
\end{align}
where for each $i,j \in \{1,\dots,N\}$, the operator $K_{ij}$ is given by 
\begin{align*}
		K_{ij}(\pmb{f})(v)
		&:= \frac{1}{\sqrt{\mu_i(v)}}
		\left[
		Q_{ij}(\mu_i, \sqrt{\mu_j} f_j)(v)
		+ Q_{ij}^{+}(\sqrt{\mu_i} f_i, \mu_j)(v)
		\right]\cr
		&= \int_{\R^3 \times \S^2}B_{ij}(v-v_*, \sigma)\sqrt{\mu_j(v_*)}\left[\sqrt{\mu_i(v')}f_j(v'_*)+\sqrt{\mu_j(v'_*)}f_i(v')\right]d\sigma dv_* \cr
		&\quad - \int_{\R^3 \times \S^2}B_{ij}(v-v_*, \sigma)\sqrt{\mu_j(v_*)}\sqrt{\mu_i(v)}f_j(v_*)d\sigma dv_*.
\end{align*}
The nonlinear collision term is given by $\Gamma_i(\pmb{f}) = \sum_{j=1}^{N} \Gamma(f_i, f_j)$, where 
\begin{equation*} 
	\Gamma(f_i, f_j) = \frac{1}{\sqrt{\mu_i}} Q_{ij} (\sqrt{\mu_i} f_i , \sqrt{\mu_j}f_j).
\end{equation*}
The nonlinear collision term is also split into gain and loss terms $\Gamma_i= \Gamma^+_{i} - \Gamma^-_{i}$:
\begin{align*}
	\Gamma^+_{i}(\pmb{f})= \sum_{j=1}^N\Gamma^+(f_i, f_j) &= \sum_{j=1}^N\int_{\R^3 \times \S^2} B_{ij}(v-v_*, \sigma) \sqrt{\mu_j(v_*)} f_i(v') f_j(v'_*) d\sigma dv_*, \\
	\Gamma^-_{i}(\pmb{f})=\sum_{j=1}^N\Gamma^-(f_i, f_j) &= \sum_{j=1}^N\int_{\R^3 \times \S^2} B_{ij}(v-v_*, \sigma) \sqrt{\mu_j(v_*)} f_i(v) f_j(v_*) d\sigma dv_*.
\end{align*}

\subsection{History and related works}
For the classical Boltzmann equation describing dynamics of colliding monatomic gases, extensive progress has been made in the study of its well-posedness. DiPerna-Lions \cite{Diperna-Lion} established the global existence of renormalized solutions for general initial data possessing finite mass, energy, and entropy. Despite this fundamental result, the issue of uniqueness for such solutions remains open. In addition to existence theories, the investigation of long-time behavior constitutes a central theme in the analysis of the Boltzmann equation, largely motivated by the H-theorem. Since entropy dissipation vanishes at the global Maxwellian, it is natural to expect convergence toward this equilibrium state. Under global-in-time a priori regularity assumptions and a Gaussian lower bound on solutions, Desvillettes and Villani \cite{DV} proved convergence to the global Maxwellian with an almost exponential rate.

Alongside these developments, well-posedness and asymptotic behavior have also been studied within perturbative frameworks. Ukai \cite{Ukai} first constructed global-in-time solutions in a periodic domain $\mathbb{T}^3$, assuming the initial data are sufficiently close to a global Maxwellian. Subsequently, Guo \cite{GuoVPB,GuoVMB} employed energy methods in high Sobolev spaces to obtain classical solutions in the absence of physical boundaries. In bounded domains, however, high-order regularity cannot generally be expected; see \cite{GKTT2017,Kim2011,KLKRM}. To address this difficulty, Guo \cite{Guo10} introduced an $L^2$-$L^\infty$ bootstrap framework, which enables the construction of solutions with lower regularity under suitable boundary conditions. This approach was later refined by Kim and Lee \cite{KimLee}, who replaced the analyticity assumption in \cite{Guo10} with general $C^3$ uniformly convex bounded domains under the specular reflection boundary condition. For further developments concerning boundary value problems, we refer to \cite{CKLVPB,KimLeeNonconvex,KL_holder,KKL2025}. For the system of polyatomic gases, related results can be found in \cite{DL2023}.
	
However, the aforementioned works based on perturbative frameworks rely heavily on the assumption that the initial perturbation is sufficiently small in the $L^\infty$ norm. Relaxing this constraint to allow for a general class of large initial data remains a challenging open problem. Nevertheless, some progress has been made in this direction. For instance, \cite{DHWY2017} replaced the $L^\infty_{x,v}$ smallness assumption with smallness in $L_x^1 L_v^\infty$ and initial relative entropy, thereby allowing for large amplitudes in the $L^\infty_{x,v}$ norm. Then, \cite{DW2019,KLP2022,DKL2023} established large-amplitude results by utilizing the smallness of the initial relative entropy rather than the $L_x^1 L_v^\infty$ norm. For polyatomic gases, related results can be found in \cite{GS2025}. More recently, large-amplitude problems have been studied in the $L^p_v L^\infty_x$ framework rather than in the $L^\infty_{x,v}$ setting; see, for instance, \cite{KK2026,DKL2025}.
	
Early works on kinetic models for gas mixtures focused on constructing thermodynamically consistent descriptions. Desvillettes et al. introduce a kinetic model for gaseous mixtures with chemical reactions, which preserves the appropriate conservation laws and satisfies fundamental thermodynamic principles \cite{DMS2005}. The diffusive behavior of kinetic mixture models was later investigated by Boudin, Grec, Pavić, and Salvarani, who studied the diffusion asymptotics of a kinetic model for gaseous mixtures \cite{BGPS2013}. In a subsequent work, Boudin, Grec, and Salvarani rigorously derived the Maxwell–Stefan diffusion limit from a kinetic model of gas mixtures \cite{BGS2015}. More recently, Daus et al. analyzed hypocoercivity and exponential convergence to equilibrium for a linearized multi-species Boltzmann system \cite{DJMZ2016}, while related spectral gap stability results were established in \cite{BBBG2020}. Our study is partly motivated by the work of Briant and Daus \cite{BD2016}. In parallel, inelastic kinetic models for gas mixtures have also been studied; see, for instance, \cite{AL2023,RT2025}. Recently, classical solutions for multi-species Boltzmann equation with unequal molecular masses were established in the soft potential regime \cite{WWW2026}. However, the existing results for the multi-species Boltzmann equation are restricted to the small-amplitude setting. To the best of our knowledge, the global well-posedness of the multi-species Boltzmann equation for large-amplitude initial data has not been fully understood. Finally, we refer to  \cite{KLY2021_1,BKLY_BGK,BKPY2021,KLY2021_2,BKPY2023,BKLY2024,HLY2024} for various results on relaxation models of the Boltzmann equation in the context of gas mixtures.

\subsection{Strategy and Novelty}

As discussed in the previous subsection, significant progress has been made for the multi-species Boltzmann equation in the small-amplitude regime, covering both hard potentials \cite{BD2016} and, more recently, soft potentials \cite{WWW2026}. However, the question of global well-posedness and convergence to equilibrium for large-amplitude initial data remains entirely open, particularly for gas mixtures. The existing theories, including \cite{BD2016}, fundamentally rely on the assumption that the initial perturbation is sufficiently small in the $L^\infty_{x,v}$ norm to control the nonlinear collision operator $\Gamma$. In the large-amplitude setting, this strategy breaks down as the $L^\infty_{x,v}$ smallness assumption is no longer valid, rendering the direct extension of standard perturbative arguments impossible.

To overcome this, we adopt a strategy inspired by recent developments in the single-species Boltzmann equation \cite{KLP2022} and polyatomic models \cite{GS2025}, which successfully handle large initial perturbations by exploiting the smallness of relative entropy rather than the $L^\infty_{x,v}$ norm. However, extending this framework to gas mixtures with distinct masses is highly non-trivial and presents unique analytical challenges not present in the single-species case.

To implement this strategy, a central problem is to control the nonlinear collision term—specifically the gain term $\Gamma^+$ without relying on a priori smallness in $L^{\infty}_{x,v}$. This necessitates a more refined point-wise estimate of the nonlinear collision operator. We emphasize that the multi-species setting with distinct masses ($m_i \neq m_j$) further complicates this problem. The mass disparity breaks the intrinsic symmetry properties of the collision operator found in single-species models. Consequently, the standard Carleman representation cannot be directly applied; it fails for mixtures with distinct masses because the post-collisional variables exhibit a complex dependence that prevents the decoupling of parallel and orthogonal components.

In the remainder of this subsection, we outline the key novelties and technical tools developed to address these structural challenges, proceeding in the following three steps\\

\textbf{Step 1: Estimate of the nonlinear gain term $\Gamma^+(f, f)$ via Carleman representation.} The classical pointwise estimate for the nonlinear collision operator,
\begin{align*}
	|w\Gamma(f, f)| \leq C \nu(v) \|wf\|_{L^\infty}^2,
\end{align*}
is no longer applicable in large-amplitude regime, because it relies crucially on the smallness of the weighted $L^\infty_{x,v}$ norm. Consequently, we require an alternative estimate to control the nonlinear term $\Gamma$ without assuming any smallness of $\Vert wf \Vert_{L^\infty}$. \\
\indent To address this, we decompose the nonlinear term into gain and loss parts, $\Gamma=\Gamma^+-\Gamma^-$, and treat them separately. The estimate for the loss term $\Gamma^-$ is deferred to Step 2, and we focus here on the gain term $\Gamma^+$. In the context of single-species gases, the following refined point-wise estimate is typically employed:
\begin{align*}
	|w(v) \Gamma^+(f)(t,x,v)| \leq C \frac{ \Vert wf(t) \Vert_{L^\infty}}{1+|v|} \left( \int_{\R^3} (1+ |\eta|)^{-\ell }|wf(\eta)|^p d\eta \right)^{1/p},
\end{align*} 
for some $p>1$, where $w(v) = \langle v \rangle^q$. This shows that the nonlinear gain term can be controlled even for large-amplitude data, provided that the initial relative entropy is sufficiently small.\\
\indent We note that this estimate can be derived through the Carleman representation. For the single-species case, the Carleman representation enjoys strong geometric simplifications: the post-collision velocities $v'$ and $v_*'$ can be decomposed cleanly into components parallel $z_{\parallel}:= (z\cdot \omega) \omega$  and orthogonal $z_{\perp}:= z-z_{\parallel}$ to the relative velocity $z:=v_*-v$. This orthogonal–parallel decomposition is essential for revealing the symmetry structure of the collision gain operator, thereby enabling the derivation of the nonlinear estimates stated above. However, in the multi-species setting with distinct particle masses $m_i \neq m_j$, this structure breaks down. Specifically, while the decomposition for $v'$ remains manageable, in the sense that 
\begin{align*}
	v' = v'(v,z_{\parallel})= v+ \frac{2m_j}{m_i+m_j} z_{\parallel}, 
\end{align*}
the post-collision velocity $v_*'$ possesses a more complicated structure:
\begin{align*}
	v_*' =v_*'(v,z_{\parallel},z_{\perp})= v + \left(1 - \frac{2m_i}{m_i+m_j} \right)z_{\parallel} + z_{\perp}. 
\end{align*}
The presence of the parallel component in $v_*'$, which is absent in the single-species case, reflects the mass asymmetry of the collision and prevents a clean orthogonal–parallel decoupling (see the proof of Lemma \ref{gain est_new}).
This prevents a direct extension of the single-species Carleman analysis and creates new difficulties in controlling the gain term.\\
\indent To resolve the geometric difficulty caused by the mass asymmetry, we use the Carleman-type representation introduced in \cite{BD2016} (Lemma \ref{carleman 2}), which expresses the gain term through an integration over a sphere $\tilde{E}^{ij}_{vv'_*}$ of radius $R_{vv'_*}$ centered at $O_{vv'_*}$. This representation, however, only performs the geometric reduction of the collision integral, and the velocity behavior of the resulting exponential factors still has to be analyzed. A direct expansion of the exponential factors produced by the change of variables yields the product
	\begin{align*}
		e^{-\frac{m_j}{4}|v'_*|^2}\, e^{\frac{m_i}{4}|v|^2}\, e^{-\frac{m_i}{4}R^2_{vv'_*}}\, e^{-\frac{m_i}{4}|O_{vv'_*}|^2}\, e^{\frac{m_i}{2}R_{vv'_*}|O_{vv'_*}|},
	\end{align*}
	in which both growing and decaying contributions in the velocity variable are present, with the cross-term $e^{\frac{m_i}{2}R_{vv'_*}|O_{vv'_*}|}$ in particular preventing a direct sign analysis. It is not immediately clear whether these exponential factors retain sufficient decay in the velocity variable to derive the desired pointwise gain estimate. \\
	\indent The first contribution of this work is to reveal an exact algebraic cancellation among these factors. We prove that, after substituting the explicit forms of $R_{vv'_*}$ and $O_{vv'_*}$ from \eqref{RO}, the five exponential factors above combine into the single negative semi-definite quadratic form:
	\begin{align*}
		-\frac{m_j}{4|m_i-m_j|^2}\bigl(|m_i v - m_j v'_*| - m_i|v-v'_*|\bigr)^2.
	\end{align*}
	This cancellation, which to our knowledge has not previously been observed in the literature on multi-species kinetic equations, yields the polynomial decay in the velocity variable required for the pointwise gain estimate of Lemma \ref{gain est_new}.\\

\textbf{Step 2: Nonlinear solution operator and exponential weight integral.} 
To make use of the gain estimate for $\Gamma^+$ derived in Step 1, we reformulate the multi-species Boltzmann equation:
\begin{equation*}
\partial_t f_i  + v \cdot \nabla_x f_i  + \mathcal{R}_i(\pmb{f}) f_i = K_i(\pmb{f}) +\Gamma^+(\pmb{f}), \quad \forall i \in \{1,\cdots, N\}, 
\end{equation*}
where $\mathcal{R}_i(\pmb{f})$ is defined as the sum of the linear collision frequency $\nu_i(v)f_i $ and the nonlinear loss term $\Gamma^-(\pmb{f})$. Accordingly, the nonlinear solution operator related with the damping term $\mathcal{R}_i(\pmb{f})$ is represented by 
\begin{equation*}
\mathcal{A}_i(t,x,v)
:= \exp\!\left(
-\int_0^t \mathcal{R}_i(\pmb f)\bigl(s,\,x-v(t-s),\,v\bigr)\,ds
\right).
\end{equation*}
Under the hard potential assumption, the linear collision frequency $\nu_i(v)$ admits a positive lower bound (Lemma \ref{CFE}). The damping rate generated by $\mathcal{R}_i(\pmb{f})$, however, depends on $\pmb{f}$ itself through the loss term, and it is not clear whether a uniform lower bound holds when $\|w\pmb{f}\|_{L^\infty}$ is large. In the single-species case \cite{DW2019,DKL2023}, the corresponding lower bound is obtained by controlling a single Gaussian-weighted velocity integral through the smallness of the initial relative entropy. In the multi-species setting, the cross-collision terms involve $\mu_j(v_*)$ for every $j \neq i$, and the analysis requires controlling $N$ distinct Gaussian-weighted velocity integrals simultaneously, each associated with a different species. Furthermore, the loss term $\Gamma_i^-(\pmb{f})$ has a polynomial growth of order $(1+|v|)^\gamma$, and the analysis requires showing that this contribution does not dominate the damping induced by $\nu_i(v)$ for each species. \\
\indent To establish such a positive lower bound for $\mathcal{R}_i(\pmb f)$, we therefore
perform a weighted velocity integral estimate.
More precisely, we control the exponentially weighted quantity
\[
\int_{\mathbb R^3} e^{-\frac{|v|^2}{4}} |h_i(t,x,v)|\,dv \leq C,
\quad \forall i \in \{1, \cdots, N\} ,
\]
which is essential since the estimate for the nonlinear loss term
$\Gamma_i^-(\pmb f)$ produces a polynomial velocity growth of order
$(1+|v|)^\gamma$.
The Gaussian weight $e^{-\frac{|v|^2}{4}}$ provides sufficient decay to absorb this
growth, while the smallness of the initial relative entropy guarantees that the
weighted integral remains uniformly bound after a finite time $\tilde{t}$.
As a consequence, the contribution of the nonlinear loss term can be controlled and
made sufficiently small, which allows us to recover the lower bound
\begin{align*}
	\mathcal{R}_i(\pmb f)(t,x,v) \ge \frac12 \nu_i(v) \gtrsim 1, \quad \forall (t,x,v) \in [\tilde{t},\infty) \times \T^3 \times \R^3. 
\end{align*}
\indent Thanks to this positive lower bound, the mild formulation associated with the
nonlinear solution operator exhibits an effective damping structure. \\ 

\textbf{Step 3: $L^\infty(\langle v \rangle^q)$ estimate via Gr\"{o}nwall inequality.} Using the positive lower bound for the nonlinear solution operator established in Step 2 together with the refined nonlinear gain estimate derived in Step 1, we
rewrite the equation in a new mild form:
\begin{equation*}
f_i(t)
\sim  e^{-\nu_i t} f_{0,i}
+ \int_0^t e^{-\nu_i(t-s)} \bigl[ K_i (\pmb{f})(s) + \Gamma^+_{i}(\pmb{f})(s) \bigr]\,ds.
\end{equation*}
The core of the argument is an $L^2$--$L^\infty$ bootstrap scheme in the spirit of Guo’s approach in \cite{Guo10}. Although the $L^\infty_{x,v}$ norm of $wf_0$ may be large, we assume that the initial relative entropy. In particular, our paper provides the control of the $L^1_{x,v}$ and $L^2_{x,v}$ norms by the initial relative entropy in Lemma \ref{RE}. A double Duhamel iteration yields the $L^1_{x,v}$ or $L^2_{x,v}$ type norms, which allows the contributions from both linear and nonlinear terms to be controlled through the initial relative entropy. 

More precisely, combining the nonlinear gain estimate from Step~1 with the new mild
formulation, we derive a weighted $L^\infty$ estimate in which the $L^\infty_{x,v}$ norm of
the solution is bounded by a functional involving $L^1_{x,v}$ or $L^2_{x,v}$ type norms generated through the double Duhamel iteration and controlled by the small initial relative entropy. This leads to the following Gr\"{o}nwall inequality 
\begin{align*}
\|w\pmb{f}(t)\|_{L^\infty} \lesssim e^{-\lambda t} \| w\pmb{f}_{0} \|_{L^\infty}\left(1+\int_{0}^{t}\|w\pmb{f}(s)\|_{L^\infty}ds\right)+\mathcal{P}\left(\sup_{0\leq s \leq t} \|w\pmb{f}(s)\|_{L^\infty}\right) \mathcal{E}(\pmb{F}_0).
\end{align*}
where $\mathcal{P}$ is a polynomial function. As a consequence, there exists a finite time $T_0>0$ such that the solution enters
the small-amplitude regime for all $t\ge T_0$. Thereafter, the small-amplitude theory applies, yielding global
well-posedness and convergence to equilibrium.

\subsection{Organization of this paper}
In Section \ref{Sec2}, we state global well-posedness results for the multi-species Boltzmann equation, including both small-amplitude and large-amplitude initial data. In particular, we present a new lower bound estimate for a nonlinear collision frequency, which plays a crucial role in the analysis of large-amplitude solutions. In Section \ref{Sec3}, we introduce the preliminary tools used throughout this paper: entropy estimates for the multi-species Boltzmann equation, estimates for the linearized operator developed in \cite{BD2016}, and pointwise estimates for the gain part of the nonlinear collision operator. Section \ref{sec:small_regime} is devoted to proving the main theorem in the small-amplitude regime. In Section \ref{sec:LAPP}, we first establish the lower bound for the nonlinear collision frequency stated in Section \ref{Sec2}, and then derive a weighted $L^\infty$ Gr\"onwall-type estimate. Finally, in Section \ref{sec:large_regime}, we combine the results of Sections \ref{sec:small_regime} and \ref{sec:LAPP} to complete the proof of the main theorem for large-amplitude perturbations.
\section{Main results} \label{Sec2}
We begin with the perturbative regime where the initial data are sufficiently small in a weighted $L^\infty_{x,v}$ norm. 
In this setting, the solution remains globally small and exhibits exponential decay. The proof of this theorem is given in Section~\ref{sec:small_regime}.
\begin{theorem} \label{thm:main2}Let $w(v)$ be the velocity weight function defined in \eqref{v_weight}. Let the initial data be given by 
\begin{align*}
F_{0,i}(x,v) = \mu_i(v) + \sqrt{\mu_i(v)} f_{0,i}(x,v) \ge 0,
\end{align*}
and assume that the conservation laws of mass, momentum, and energy are satisfied, that is,
\begin{align} \label{conserv}
\begin{cases} 
    \displaystyle \int_{\mathbb{T}^3 \times \mathbb{R}^3} \sqrt{\mu_i(v)}f_{0,i} (x,v) \, dvdx =0, & \forall i \in \{1,\cdots,N\}, \\[10pt]
    \displaystyle \sum_{i=1}^N \int_{\mathbb{T}^3 \times \mathbb{R}^3} m_i v \sqrt{\mu_i(v)} f_{0,i}(x,v) \, dvdx =0, \\[10pt]
    \displaystyle \sum_{i=1}^N \int_{\mathbb{T}^3 \times \mathbb{R}^3} m_i |v|^2 \sqrt{\mu_i(v)} f_{0,i}(x,v) \, dvdx=0,
\end{cases}
\end{align}
There exists a sufficiently small constant $\kappa>0$ such that if 
		\begin{align*}
			\Vert w\pmb{f}_0 \Vert_{L^\infty} \leq \kappa,
		\end{align*} 
		then the multi-species Boltzmann equation admits a unique global solution 
		\begin{align*}
			F_i(t,x,v)=\mu_i(v) + \sqrt{\mu_i(v)} f_i(t,x,v) \geq 0, \quad \forall i \in \{1,\cdots, N\},
		\end{align*}
		satisfying  
		\begin{align*}
			\Vert w\pmb{f}(t) \Vert_{L^\infty} \leq C e^{-\lambda_1 t} \Vert w\pmb{f}_0 \Vert_{L^\infty}, \quad \forall t \geq 0, 
		\end{align*}
		for some constants $C>0$ and $\lambda_1 >0$.
\end{theorem}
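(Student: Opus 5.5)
This is the standard small-amplitude theory, run through Guo's $L^2$--$L^\infty$ framework \cite{Guo10} adapted to mixtures as in \cite{BD2016}. The plan has three parts: a linear decay estimate for the semigroup generated by $-v\cdot\nabla_x+\pmb L$, a weighted $L^\infty$ bound on the nonlinear term, and a contraction argument.

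\textbf{Step 1: linear $L^2$ decay.} On the orthogonal complement of the collision invariants, I will use the spectral gap of $\pmb L$ for mixtures proved in \cite{BD2016}:
\[
-\langle \pmb L\pmb f,\pmb f\rangle_{L^2_v}\ge \lambda_0\|(I-\pmb P)\pmb f\|_{L^2_v(\nu)}^2 .
\]
Here $\pmb P$ is the projection onto the kernel, which is spanned by $(\delta_{ik}\sqrt{\mu_i})_i$ for $k=1,\dots,N$, by $(m_iv\sqrt{\mu_i})_i$, and by $(m_i|v|^2\sqrt{\mu_i})_i$.

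The macroscopic part $\pmb P\pmb f$ is recovered by the usual hypocoercivity (or macro--micro) argument on $\T^3$. The conservation laws \eqref{conserv} kill the spatial means of the macroscopic coefficients, so Poincaré's inequality applies. This gives exponential decay of the linear semigroup:
\[
\|U(t)\pmb f_0\|_{L^2}\le Ce^{-\lambda t}\|\pmb f_0\|_{L^2}.
\]
The same argument, carried out for the inhomogeneous linear problem with a source $\pmb g$ satisfying $\pmb P\pmb g=0$, yields an $L^2$ decay estimate in terms of $\pmb g$. This applies to $\Gamma$, since $\pmb P\Gamma(\pmb f)=0$ by the conservation structure of $Q_{ij}$.

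\textbf{Step 2: weighted $L^\infty$ estimate for the linear problem.} Set $h_i=wf_i$ and write the mild form along characteristics:
\[
h_i(t)=e^{-\nu_i t}h_{0,i}+\int_0^t e^{-\nu_i(t-s)}K_{w,i}(\pmb h)(s)\,ds .
\]
I will iterate this Duhamel formula twice. The ingredients are the kernel bounds for $K_{ij}$ from \cite{BD2016}: each $K_{ij}$ has a kernel $k_{ij}(v,u)$ with
\[
\int|k_{w,ij}(v,u)|e^{\epsilon|v-u|^2}\,du\lesssim \langle v\rangle^{-1}.
\]
The double integral then splits into three pieces. Large velocities contribute small factors through $\langle v\rangle^{-1}$. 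Short times $s-s'<\epsilon$ are small. On the remaining bounded set, I truncate the kernel and change variables $y=x-v(t-s)-v'(s-s')$; the Jacobian is bounded below away from $s-s'\approx 0$, which converts the integral into an $L^2_{x,v}$ norm. The outcome is
\[
\|h(t)\|_{L^\infty}\lesssim e^{-\lambda t}\|h_0\|_{L^\infty}+\int_0^t e^{-\lambda(t-s)}\|f(s)\|_{L^2}\,ds ,
\]
and Step 1 then closes the linear estimate. The main obstacle is exactly this step: the kernel estimates for the cross-species $K_{ij}$ with $m_i\neq m_j$. For these I rely entirely on \cite{BD2016} and only have to check that the weighted version with $w=\langle v\rangle^q$ transfers.

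\textbf{Step 3: nonlinear closure.} I will use the pointwise bound
\[
|w\Gamma_i(\pmb f)|\le C\nu_i(v)\|w\pmb f\|_{L^\infty}^2 ,
\]
which holds because $q>4$ and $\sqrt{\mu_j(v_*)}$ provides the decay in the $v_*$ integration. Write $f=U(t)f_0+\int_0^t U(t-s)\Gamma(f)(s)\,ds$. The factor $\nu$ in the bound is absorbed through $\int_0^t e^{-\nu(t-s)}\nu\,ds\le 1$ in the mild form. Consider the space
\[
X=\Bigl\{\pmb f:\ \sup_t e^{\lambda_1 t}\|w\pmb f(t)\|_{L^\infty}\le 2C\kappa\Bigr\}.
\]
On $X$ the Duhamel map is a contraction when $\kappa$ is small, which gives existence and uniqueness. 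For positivity of $F_i$, I take $F$ as the limit of the standard iteration
\[
\partial_t F^{n+1}_i+v\cdot\nabla_x F^{n+1}_i+F^{n+1}_i\sum_j\int B_{ij}F^n_j=\sum_j Q^+_{ij}(F^n_i,F^n_j),
\]
which preserves nonnegativity at every stage. The same smallness makes this iteration converge in the weighted $L^\infty$ norm, so its limit coincides with the fixed point and $F_i\ge 0$.
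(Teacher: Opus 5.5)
Your proposal is correct and follows essentially the same route as the paper. Both use $L^2$ decay from the coercivity and macroscopic estimates of \cite{BD2016}, then Guo's double-Duhamel $L^2$--$L^\infty$ bootstrap for the weighted linear problem. The nonlinear step is a Picard iteration closed with $|w\Gamma_i(\pmb f)|\lesssim\langle v\rangle^\gamma\|w\pmb f\|_{L^\infty}^2$; the paper's $\pmb h^{m+1}=\pmb h^{m+1}_g+\pmb h^{m+1}_\Gamma$ scheme is exactly your contraction. Positivity is then obtained from the $Q^+$-iteration.

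One point should be made explicit. The nonlinear Duhamel term cannot be bounded through a weighted $L^\infty$ estimate on $U(t-s)$ alone, because $w\Gamma$ grows like $\langle v\rangle^\gamma$. As in the paper, it must go through the $e^{-\nu(t-s)}$ mild form, a second Duhamel step, and the inhomogeneous $L^2$ bound with source $\Gamma(\pmb f)$. Your remark about absorbing $\nu$ amounts to this, but it should be stated as such.
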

	We establish a key estimate for the solution operator $\mathcal{R}_i(\pmb{f})$ in the large-amplitude regime. This estimate serves as a fundamental ingredient for applying the nonlinear estimate in the proof of Theorem \ref{thm:main}. The proof of this proposition is given in Section~\ref{sec:LAPP}.
	\begin{proposition} \label{Rf est}
	Let $w(v)$ denote the velocity weight function defined in \eqref{v_weight}. Let $f_i(t,x,v)$ be a solution to \eqref{refor Bol} with initial data 
${f}_{0,i}(x,v)$ satisfying \eqref{conserv}, and define $h_i(t,x,v)=w(v) f_i(t,x,v)$ for $1\leq i \leq N$. Assume that 
	\begin{align*}
		\sup_{0\leq s\leq t}\|h_i(s)\|_{L^\infty}\leq \bar{M}<\infty, \quad\forall i\in \{1,\cdots, N\}.
	\end{align*}
	Then, there exists a  small positive constant $\varepsilon_1>0$ such that if the initial relative entropy satisfies $\mathcal{E}(\pmb F_0)\leq \varepsilon_1$, then
	\begin{align*}
		\mathcal{R}_i(\pmb{f})(t,x,v)\geq \frac{1}{2} \nu_i(v)>0, \quad \forall (t, x, v) \in [\tilde{t}, T_0] \times \mathbb{T}^3 \times \mathbb{R}^3,
	\end{align*} 
	where 
		\begin{align*}
			\mathcal{R}_i(\pmb{f})(t,x,v):= \sum_{j=1}^N \int_{\R^3 \times \S^2} B_{ij}(v-v_*,\sigma) [\mu_j(v_*)+\sqrt{\mu_j(v_*)} f_j(v_*)] d\sigma dv_*
		\end{align*}
and $\nu_i(v)$ is the collision frequency defined in \eqref{nu_def}. Here, $\tilde t>0$ is a constant to be determined in the proof.
\end{proposition}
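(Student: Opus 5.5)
Since $\mathcal{R}_i(\pmb f)=\nu_i(v)+\sum_j\int B_{ij}\sqrt{\mu_j(v_*)}f_j(v_*)\,d\sigma dv_*$, it suffices to show that for $t\ge\tilde t$ the loss term satisfies
\begin{align*}
\Big|\sum_{j=1}^N\int_{\R^3\times\S^2}B_{ij}(v-v_*,\sigma)\sqrt{\mu_j(v_*)}f_j(t,x,v_*)\,d\sigma dv_*\Big|\le \tfrac12\nu_i(v).
\end{align*}
Using $B_{ij}\le C|v-v_*|^\gamma$ and $|v-v_*|^\gamma\le (1+|v|)^\gamma(1+|v_*|)^\gamma$, together with $\nu_i(v)\gtrsim(1+|v|)^\gamma$ from Lemma \ref{CFE}, it is enough to prove that for every $j$,
\begin{align*}
\int_{\R^3}(1+|v_*|)^\gamma\sqrt{\mu_j(v_*)}\,|f_j(t,x,v_*)|\,dv_*\le c_0,
\end{align*}
with $c_0$ a small constant depending only on the masses and kernel constants. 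Since $\sqrt{\mu_j}(1+|v_*|)^\gamma\lesssim e^{-m_j|v_*|^2/8}$, this reduces to showing that a Gaussian-weighted velocity integral of $h_j$ (or of $f_j$) is small, pointwise in $(t,x)$.

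\textbf{Mild formulation along characteristics.} The pointwise-in-$x$ smallness is the main obstacle, because the relative entropy only controls $L^1_{x,v}$ and $L^2_{x,v}$ type norms (Lemma \ref{RE}). I would write the equation in mild form with the linear damping $e^{-\nu_j(v)t}$:
\begin{align*}
f_j(t)=e^{-\nu_j t}f_{0,j}(x-vt,v)+\int_0^t e^{-\nu_j(t-s)}\big[K_j(\pmb f)+\Gamma_j(\pmb f)\big](s,x-v(t-s),v)\,ds.
\end{align*}
I then insert this into the Gaussian-weighted $v$-integral and treat the pieces as follows.
\begin{itemize}
\item The initial term is bounded by $e^{-\nu_0 t}\bar M$. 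This is small once $t\ge\tilde t$, with $\tilde t\sim\log\bar M$.
\item For the $s$-integral, I cut out $s\in[t-\epsilon,t]$, which contributes $O(\epsilon\bar M^2)$. I also cut large velocities, which contributes $O(\bar M^2/N_0)$ using the kernel bounds for $K_j$ and the bound $|w\Gamma|\lesssim\nu\bar M^2$.
\item On the remaining bounded region, I change variables $y=x-v(t-s)$, with Jacobian $(t-s)^{-3}\le\epsilon^{-3}$. This converts the $v$-integral into an $x$-integral over $\T^3$. The integrand $K_j(\pmb f)+\Gamma_j(\pmb f)$ is then bounded in $L^1_{x,v}$ by $C_{N_0}(1+\bar M)$ times $\|\pmb f\|$ in $L^1$/$L^2$ on a truncated set.
\item On that truncated set, Lemma \ref{RE} bounds the resulting norm by $C\sqrt{\mathcal E(\pmb F_0)}+C\mathcal E(\pmb F_0)$. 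Here I would split $F_j-\mu_j$ into the region $|F_j-\mu_j|\le\mu_j$, which gives the $L^2$ part, and its complement, which gives the $L^1$ part.
\end{itemize}
Putting these together gives a bound of the form
\begin{align*}
C\bar M e^{-\nu_0 t}+C(\epsilon+N_0^{-1})(1+\bar M)^2+C_{\epsilon,N_0}(1+\bar M)^2\sqrt{\varepsilon_1}.
\end{align*}

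\textbf{Choice of parameters and main difficulty.} I first choose $N_0$ large and $\epsilon$ small, depending on $\bar M$, then $\tilde t$, and finally $\varepsilon_1$ so that the total is at most $c_0$. This yields $\mathcal{R}_i(\pmb f)\ge\frac12\nu_i$ on $[\tilde t,T_0]$. The delicate point I anticipate is the $\Gamma^+$ part inside the Duhamel integral. After the change of variables it still involves a double integral over $(v_*,\sigma)$, so obtaining an $L^1_{x,v}$ bound that is linear in the entropy-controlled quantity requires the gain estimate of Lemma \ref{gain est_new}, applied with one factor in $L^\infty$ (costing $\bar M$) and the other factor in $L^1$. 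This is the step where I would check the multi-species kernel carefully. If the entropy only gives $\sqrt{\varepsilon_1}$ rather than $\varepsilon_1$, the argument still closes, because $\varepsilon_1$ is allowed to depend on $\bar M$.
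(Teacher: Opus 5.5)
Your proposal is correct and follows essentially the same route as the paper. You reduce to smallness of $\int_{\R^3} e^{-m_j|v|^2/8}|h_j(t,x,v)|\,dv$, write the Duhamel formula with the linear damping $e^{-\nu_j(v)(t-s)}$, and discard the window $[t-\delta,t]$ and large velocities; on the remaining bounded region you change variables $y=x-(t-s)v$, apply Lemma \ref{RE}, and handle the $\Gamma^+$ contribution through Lemma \ref{gain est_new}. Two minor corrections are worth making. First, that lemma gives weighted $L^2$ and $L^5$ norms in $\eta$ rather than an $L^1$ norm, and the paper converts the $L^5$ piece into the entropy-controlled $L^2$ norm via $|h_k|^5\le \bar M^3|h_k|^2$, which produces a factor $\mathcal{E}(\pmb F_0)^{1/5}$. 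Second, the initial term is better bounded by $Ce^{-\nu_0 t}\|\pmb h_0\|_{L^\infty}$ than by $Ce^{-\nu_0 t}\bar M$, so that $\tilde t$ depends only on the initial data; this is needed later, when $\bar M$ is defined in terms of $\tilde t$ in \eqref{Mbar}, to avoid a circular dependence.
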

	Finally, We now state our main result in the large-amplitude regime, where the initial perturbation may be arbitrarily large in the weighted $L^\infty_{x,v}$ norm, provided that the initial relative entropy is sufficiently small. The proof is given in Section~\ref{sec:large_regime}.		
	\begin{theorem}\label{thm:main}
	Let $w(v)$ be as defined in \eqref{v_weight}. 
The initial data are given by 
\[
F_{0,i}(x,v) = \mu_i(v) + \sqrt{\mu_i(v)} f_{0,i}(x,v) \ge 0.
\]
Assume that the initial perturbation ${f}_{0,i}$ satisfies \eqref{conserv} and
\[
\| w \pmb{f}_0 \|_{L^\infty} \le M_0
\]
for some fixed (possibly large) constant $M_0>0$. Then there exists a sufficiently small constant $\varepsilon_2 >0$ such that, if
\[
\mathcal{E}(\pmb{F}_0) \le \varepsilon_2,
\]
the multi-species Boltzmann equation~\eqref{mixed bol}
admits a unique global-in-time mild solution
\begin{align*}
F_i(t,x,v)=\mu_i(v) + \sqrt{\mu_i(v)} f_i(t,x,v) \geq 0, 
\quad \forall i \in \{1,\cdots, N\},
\end{align*}
satisfying 
\[
\| w \pmb{f}(t) \|_{L^\infty}
\le
 C M_0^2 \exp \left\{ \frac{C M_0^2}{\lambda_1}\right\} e^{-\vartheta t},
\qquad
\forall t \ge 0,
\]
for some constants $C>0$ and $\vartheta>0$, where $\lambda_1$ is defined in Theorem \ref{thm:main2}.
\end{theorem}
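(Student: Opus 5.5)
The proof combines a local existence theory, the damping lower bound of Proposition \ref{Rf est}, a weighted $L^\infty$ Gr\"onwall estimate, and the small-amplitude result Theorem \ref{thm:main2}.

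\textbf{Step 1: local solution and a priori bound.} First I construct a local-in-time mild solution. I iterate the mild form with the nonlinear damping factor $\mathcal{A}_i$, so positivity $F_i\ge 0$ is preserved. Using the crude bound $|w\Gamma(\pmb f,\pmb f)|\lesssim \nu\|w\pmb f\|_{L^\infty}^2$, this gives a solution on $[0,t_*]$ with $t_*\sim (1+M_0)^{-1}$ and $\|w\pmb f\|_{L^\infty}\le 2M_0$ there. The bootstrap ansatz is then
\[
\sup_{0\le s\le T_0}\|w\pmb f(s)\|_{L^\infty}\le \bar M,
\]
with $\bar M$ of size $CM_0^2\exp\{CM_0^2/\lambda_1\}$ and $T_0$ a time to be chosen large. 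Under this ansatz, Proposition \ref{Rf est} with $\mathcal{E}(\pmb F_0)\le\varepsilon_1$ yields $\mathcal{R}_i(\pmb f)\ge\frac12\nu_i$ on $[\tilde t,T_0]$. On the short interval $[0,\tilde t]$, I only use $\mathcal{R}_i\ge -C\bar M\langle v\rangle^\gamma e^{-cv^2}$-type bounds, losing at most a constant factor.

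\textbf{Step 2: Gr\"onwall estimate.} Next I write the mild formulation along characteristics with factor $\mathcal{A}_i$. The $K_i$ term is iterated twice (double Duhamel), with the standard splitting of large velocities, small time gaps and a compactly truncated kernel. The resulting $L^1_{x,v}$/$L^2_{x,v}$ pieces are bounded via Lemma \ref{RE} by $\mathcal{E}(\pmb F_0)$ times polynomials in $\bar M$. For the $\Gamma^+$ term I use the pointwise gain estimate Lemma \ref{gain est_new}, which gives a factor $\|w\pmb f\|_{L^\infty}/\langle v\rangle$ times a weighted $L^p_v$ norm. That weighted $L^p_v$ norm is again controlled by $\|w\pmb f\|_{L^\infty}^{1-1/p}$ times entropy-controlled $L^1$ quantities, except for the initial-data piece. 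The outcome is
\[
\|w\pmb f(t)\|_{L^\infty}\lesssim e^{-\lambda t}M_0\Bigl(1+\int_0^t\|w\pmb f(s)\|_{L^\infty}ds\Bigr)+\mathcal P(\bar M)\,\mathcal{E}(\pmb F_0)^{\alpha}.
\]
Applying Gr\"onwall, I get $\|w\pmb f(t)\|_{L^\infty}\le CM_0^2\exp\{CM_0^2/\lambda\}e^{-\lambda t/2}+C\mathcal P(\bar M)\mathcal{E}(\pmb F_0)^\alpha$ on $[0,T_0]$. This closes the ansatz with strict inequality once $\varepsilon_2$ is small depending on $\bar M$.

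\textbf{Step 3: entering the small regime.} Now choose $T_0$ so that $CM_0^2e^{CM_0^2/\lambda_1}e^{-\lambda T_0/2}\le\kappa/2$. Then choose $\varepsilon_2$ so that $C\mathcal P(\bar M)\varepsilon_2^\alpha\le\kappa/2$. With these choices $\|w\pmb f(T_0)\|_{L^\infty}\le\kappa$. The solution satisfies the conservation laws \eqref{conserv} for all times, so Theorem \ref{thm:main2} applies from time $T_0$ onward. This gives global existence and the bound $C\kappa e^{-\lambda_1(t-T_0)}$. Combining this with the bound on $[0,T_0]$ and taking $\vartheta=\min(\lambda/2,\lambda_1)$ gives the stated decay, with the constant $CM_0^2\exp\{CM_0^2/\lambda_1\}$. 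Uniqueness follows from the Lipschitz estimate of the mild formulation in the weighted $L^\infty$ norm, given the uniform bound.

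\textbf{Main obstacle.} The hardest step is Step 2, in particular keeping the $\Gamma^+$ contribution linear in the sup norm times an entropy-small factor. This factor must not grow with $\bar M$ faster than $\varepsilon_2$ can compensate, and there is a circularity between $\bar M$ and $\varepsilon_2$. I would resolve it by fixing $\bar M$ from $M_0$ alone before choosing $\varepsilon_2$. The time interval $[0,\tilde t]$, where there is no damping, must also be treated, by absorbing its contribution through local-existence bounds into the exponential constant.
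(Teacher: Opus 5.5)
Your overall architecture matches the paper's. You use local existence, then the damping bound of Proposition \ref{Rf est} on $[\tilde t,T_0]$, then a weighted $L^\infty$ Gr\"onwall inequality via double Duhamel expansion (Lemma \ref{gain est_new} for $\Gamma^+$, Lemma \ref{RE} for the compact-velocity, time-separated pieces). You finish with $\|w\pmb f(T_0)\|_{L^\infty}\le\kappa$ and Theorem \ref{thm:main2}, with $\bar M$ fixed from $M_0$ before $\varepsilon_2$. Note that the remainder also contains a non-entropic piece $(M^{-1/5}+\delta)(1+\bar M)^3$, so the cutoffs $M,\delta$ must be fixed after $\bar M$ and before $\varepsilon_2$.

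\textbf{The failing step is the undamped interval $[0,\tilde t]$.} From the $L^\infty$ ansatz alone you only get $|\mathcal R_i-\nu_i|\le C\bar M\langle v\rangle^\gamma$. The perturbative part $\sum_j\int B_{ij}\sqrt{\mu_j}f_j\,d\sigma dv_*$ has no Gaussian decay in $v$; at a vacuum point it equals $-\nu_i(v)$. The resulting factor $\exp\{C\bar M\langle v\rangle^\gamma\tilde t\}$ is unbounded in $v$ for $\gamma>0$. If you insert a Gaussian to make the loss $v$-uniform, it becomes $e^{C\bar M\tilde t}$ and enters the prefactor of your Gr\"onwall inequality. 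Closing $\sup\|w\pmb f\|_{L^\infty}\le\bar M/2$ would then require $\bar M\gtrsim M_0e^{C\bar M\tilde t}\cdots$ with $\tilde t\sim\nu_0^{-1}\log M_0$. That is impossible for large $M_0$, and it is exactly the circularity you meant to avoid. Local-existence bounds cannot absorb this interval either: $t_*\sim(1+M_0)^{-1}$, while $\tilde t\sim\log M_0$.

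\textbf{The fix is elementary.} Since $F_j\ge0$, you have $\mathcal R_i(\pmb f)=\sum_j\int B_{ij}F_j(v_*)\,d\sigma dv_*\ge0$, so $G_{v,i}(t,s)\le1$ always. Combined with Proposition \ref{Rf est}, this gives $G_{v,i}(t,s)\le e^{\lambda_0\tilde t}e^{-\lambda_0(t-s)}$ for all $0\le s\le t\le T_0$, uniformly in $v$. By \eqref{tilde_t}, $e^{2\lambda_0\tilde t}\lesssim M_0$ depends only on $M_0$; this is \eqref{Gv}. This is what makes $\bar M=\bar M(M_0)$ admissible and produces the constant $M_0^2\exp\{CM_0^2/\lambda_1\}$.

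\textbf{The same issue appears in your Step 1.} With only $\mathcal A_i\le1$ and no damping, the crude bound $|w\Gamma|\lesssim\nu\|w\pmb f\|_{L^\infty}^2$ integrates to $t\,\nu(v)\|w\pmb f\|_{L^\infty}^2$. That is not small uniformly in $v$ when $\gamma>0$. Use instead the $v$-uniform gain bound $|w\Gamma_i^+(\pmb f)|\le C\|w\pmb f\|_{L^\infty}^2$ from Corollary \ref{nonlinear est}, keeping the loss part in the nonnegative damping. With these two corrections your argument is essentially the paper's.
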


	\section{Preliminary Estimates}\label{Sec3}
	In this section, we establish the preliminary estimates will be used throughout this paper.
	\subsection{Entropy Theory in the Multi-species Boltzmann Equation}
This subsection is devoted to providing a rigorous analysis of the entropy evolution within the multi-species framework. Based on the multi-species Boltzmann equation \eqref{mixed bol}, the time derivative of the entropy of each individual species $i$ is formulated is given by 
\begin{align*}
	&\frac{d}{dt} \int_{\mathbb{T}^3 \times \mathbb{R}^3} F_i \log F_i \, dvdx \\
	&= \sum_{j=1}^N \int_{\mathbb{T}^3 \times \mathbb{R}^3} \int_{\R^3 \times \S^2} B_{ij}(v-v_*,\sigma) [F_i(v') F_j(v'_*) - F_i(v) F_j(v_*)] \log F_i(v) \, d\sigma dv_* dvdx.
\end{align*}
Recalling the entropy $E(\pmb{F})$ introduced in \eqref{entropy}, we sum the above expression over all species $i=1, \dots, N$. By using the symmetry of the collision kernel $B_{ij}$ and the weak form of the collision operator, we derive the H-theorem for the mixture:
\begin{align}
	&\frac{d}{dt} E(\pmb{F}) \nonumber \\
	&= \frac{1}{4} \sum_{i,j=1}^N \int_{\mathbb{T}^3 \times \mathbb{R}^3} \int_{\R^3 \times \S^2}B_{ij}(v-v_*,\sigma) [F_i(v') F_j(v'_*) - F_i(v) F_j(v_*)] \log \frac{F_i(v) F_j(v_*)}{F_i(v') F_j(v'_*)} \, d\sigma dv_* dvdx \le 0. \label{H-theorem}
\end{align}
The inequality \eqref{H-theorem} demonstrates the fundamental dissipative nature of the multi-species system, where the equality holds if and only if the distribution $\pmb{F}$ attains the local Maxwellian state. Furthermore, as $t \to \infty$, the solution is expected to relax towards the global Maxwellian $\pmb{\mu}$, which serves as the unique equilibrium state determined by the conserved quantities.\\
	\indent A fundamental property of the relative entropy \eqref{relative entropy} is its monotonicity; since $\log \mu_i$ is a collision invariant and mass is conserved, the H-theorem implies that $\mathcal{E}(\pmb{F})(t) \leq \mathcal{E}(\pmb{F}_0)$ for all $t \ge 0$. The following lemma provides crucial functional estimates that allow us to control the perturbation in both $L^1_{x,v}$ and $L^2_{x,v}$ norms by the initial relative entropy. 
	\begin{lemma}\label{RE}
		Assume that $F_i(t,x,v)$ solves the multi-species Boltzmann equation \eqref{mixed bol} for each $i \in \{1, \cdots, N\}$. For any $t \geq0$, we have 
		\begin{align*}
			\sum_{i=1}^N \left(\int_{\T^3 \times \R^3} \frac{1}{4\mu_i} |F_i - \mu_i|^2 \chi_{\{|F_i-\mu_i|\leq \mu_i\}} dvdx +\int_{\T^3 \times \R^3} \frac{1}{4} |F_i - \mu_i| \chi_{\{|F_i-\mu_i|\geq \mu_i\}}dvdx \right) \leq \mathcal{E}(\pmb{F}_0). 
		\end{align*}
		Furthermore, one obtains the estimate in terms of $f_i:=(F_i-\mu_i) / \sqrt{\mu_i}$:
		\begin{align*}
			\frac{1}{4}\sum_{i=1}^N \left(\int_{\T^3 \times \R^3} |f_i(t,x,v)|^2 \chi_{\{|f_i|\leq \sqrt{\mu_i}\}} dvdx +\int_{\T^3 \times \R^3}\sqrt{\mu_i(v)}|f_i(t,x,v)| \chi_{\{|f_i|\geq \sqrt{\mu_i}\}}dvdx \right) \leq \mathcal{E}(\pmb{F}_0). 
		\end{align*}
	\end{lemma}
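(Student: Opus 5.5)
We argue through the standard convexity identity for the relative entropy, applied species by species and pointwise in $(x,v)$. Set $\Psi(s):=s\log s-s+1\ge 0$ for $s\ge0$. Since $\log\mu_i$ is a linear combination of $1$ and $m_i|v|^2$, and these are collision invariants, the conservation laws give
\begin{align*}
\sum_{i=1}^N\int_{\T^3\times\R^3}(F_i-\mu_i)\bigl(1+\log\mu_i\bigr)\,dvdx=0 .
\end{align*}
This uses $u_\infty=0$, $k_BT_\infty=1$ and the identities \eqref{conserv}. Hence
\begin{align*}
\mathcal{E}(\pmb F)(t)=\sum_{i=1}^N\int_{\T^3\times\R^3}\bigl(F_i\log F_i-\mu_i\log\mu_i-(1+\log\mu_i)(F_i-\mu_i)\bigr)\,dvdx=\sum_{i=1}^N\int_{\T^3\times\R^3}\mu_i\,\Psi\!\left(\frac{F_i}{\mu_i}\right)dvdx .
\end{align*}
By the H-theorem \eqref{H-theorem}, $\mathcal{E}(\pmb F)(t)\le\mathcal{E}(\pmb F_0)$. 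It therefore suffices to prove a pointwise lower bound on $\mu_i\Psi(F_i/\mu_i)$.

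Write $F_i/\mu_i=1+y$ with $y\ge -1$. We claim the elementary inequality
\begin{align*}
\Psi(1+y)\ \ge\ \tfrac14 y^2\ \text{ if } |y|\le 1,\qquad \Psi(1+y)\ \ge\ \tfrac14|y|\ \text{ if } |y|\ge 1 .
\end{align*}
\emph{Case $|y|\le1$.} Taylor's theorem with integral remainder gives $\Psi(1+y)=y^2\int_0^1(1-\tau)\frac{d\tau}{1+\tau y}$. Since $1+\tau y\le 2$, this is at least $y^2/4$.

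\emph{Case $y\ge1$.} The function $\Psi(1+y)/y$ is increasing. Indeed, by convexity of $\Psi$ with $\Psi(1)=0$, the slope $(\Psi(1+y)-\Psi(1))/y$ is nondecreasing in $y$. At $y=1$ its value is $2\log2-1\approx0.386\ge 1/4$. The lower endpoint $y=-1$ lies in the first case, since there $|y|\le1$.

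Multiplying by $\mu_i$, the first case gives $\mu_i\Psi\ge\frac{1}{4\mu_i}|F_i-\mu_i|^2$ on $\{|F_i-\mu_i|\le\mu_i\}$. The second case gives $\mu_i\Psi\ge\frac14|F_i-\mu_i|$ on $\{|F_i-\mu_i|\ge\mu_i\}$. Integrating over $\T^3\times\R^3$ and summing in $i$ yields the first inequality of the lemma.

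For the second inequality, substitute $F_i-\mu_i=\sqrt{\mu_i}f_i$:
\begin{itemize}
\item the set $\{|F_i-\mu_i|\le\mu_i\}$ becomes $\{|f_i|\le\sqrt{\mu_i}\}$;
\item the integrand $\frac{1}{4\mu_i}|F_i-\mu_i|^2$ becomes $\frac14|f_i|^2$;
\item the integrand $\frac14|F_i-\mu_i|$ becomes $\frac14\sqrt{\mu_i}|f_i|$.
\end{itemize}
This gives the stated bound exactly.

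The main obstacle is the vanishing of the linear term $\sum_i\int(F_i-\mu_i)(1+\log\mu_i)$. For each species separately, only the mass $\int(F_i-\mu_i)=0$ vanishes. The energy term vanishes only after summing over species, because $\log\mu_i$ contains the term $-m_i|v|^2/2$ with mass-dependent coefficient, and only the total energy $\sum_i m_i\int|v|^2F_i$ is conserved. For this reason the relative-entropy identity must be used for the whole mixture rather than species by species. The pointwise convexity bounds are then routine.
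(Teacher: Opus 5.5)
Your argument is the paper's in substance. Both proofs bound $\mu_i\Psi(F_i/\mu_i)$ below by $\frac{1}{4\mu_i}|F_i-\mu_i|^2$ where $|F_i-\mu_i|\le\mu_i$: you use the integral remainder, the paper uses the Lagrange remainder $\frac{1}{2\tilde F_i}|F_i-\mu_i|^2$ with $\tilde F_i\le 2\mu_i$. Both bound it below by $\frac14|F_i-\mu_i|$ where $F_i>2\mu_i$: you use the monotone convex slope and $\Psi(2)=2\log2-1$, the paper uses $\tilde F_i<F_i<2(F_i-\mu_i)$. Both then invoke entropy dissipation. The paper differentiates $\sum_i\int\mu_i\psi(F_i/\mu_i)$ in time and tacitly identifies its initial value with $\mathcal{E}(\pmb F_0)$. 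Your explicit use of \eqref{conserv}, including the remark that the energy part of $\log\mu_i$ cancels only after summing over species, is exactly what that identification requires.

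The final summation step does not go through as you state it. The two indicator sets in the statement overlap on $\{|F_i-\mu_i|=\mu_i\}$, so adding your two integrated bounds does not produce the stated left-hand side. On $\{F_i=2\mu_i\}$ the statement counts $\frac{\mu_i}{4}+\frac{\mu_i}{4}=\frac{\mu_i}{2}$, whereas you only have $\mu_i\Psi(2)=(2\log2-1)\mu_i<\frac{\mu_i}{2}$. No argument can close this, because the literal statement is false. Take $F_{0,1}=\mu_1(1+y(x))$ with $y=1$ on $A\subset\T^3$, $y=-\epsilon$ on a disjoint $B$ with $|A|=\epsilon|B|$, $y=0$ elsewhere, and $F_{0,i}=\mu_i$ for $i\ge2$. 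Then \eqref{conserv} holds, yet at $t=0$ the left-hand side exceeds $\mathcal{E}(\pmb F_0)$ by $n_{\infty,1}|B|\bigl((\tfrac32-2\log2)\epsilon+O(\epsilon^2)\bigr)>0$ for small $\epsilon$. What you actually prove is the version with a strict inequality in the second indicator, i.e.\ $\{|F_i-\mu_i|>\mu_i\}$ and $\{|f_i|>\sqrt{\mu_i}\}$. This is also what the paper's proof proves, since it splits $1=\chi_{\{|F_i-\mu_i|\le\mu_i\}}+\chi_{\{|F_i-\mu_i|>\mu_i\}}$. State the lemma in that form and your proof is complete.
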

	\begin{proof}
		It follows from Taylor's expansion that 
		\begin{align*}
			F_i \log F_i -\mu_i \log \mu_i = (1+ \log \mu_i) (F_i- \mu_i) +\frac{1}{2\tilde{F_i}} |F_i - \mu_i|^2, 
		\end{align*}
		where $\tilde{F_i}$ is between $F_i$ and $\mu_i$. Then, we compute 
		\begin{align*}
			\frac{1}{2\tilde{F_i}} |F_i - \mu_i|^2 = 	F_i \log F_i -\mu_i \log \mu_i -  (1+ \log \mu_i) (F_i- \mu_i)  =: \psi\left(\frac{F_i}{\mu_i}\right) \mu_i, 
		\end{align*}
		where $\psi(x):=x\log x - x +1$. Thus, 
		\begin{align*}
			\int_{\T^3 \times \R^3}  \frac{1}{2\tilde{F_i}} |F_i - \mu_i |^2 dvdx = \int_{\T^3 \times \R^3} \psi \left(\frac{F_i}{\mu_i}\right) \mu_i dvdx. 
		\end{align*}
		For the left-hand side, we split the integration domain as 
		\begin{align*}
			1= 1 \chi_{\{|F_i-\mu_i| \leq \mu_i\}} + 1 \chi_{\{|F_i-\mu_i|>\mu_i\}}.
		\end{align*}
		Since $F_i$ is non-negative, the condition $|F_i - \mu_i| > \mu_i$ implies $F_i > 2\mu_i$. Thus, on  $\{|F_i - \mu_i| > \mu_i\}$,  we have 
		\begin{align*}
			\frac{|F_i -\mu_i|}{\tilde{F_i}} = \frac{F_i-\mu_i}{\tilde{F_i}} > \frac{F- \frac{1}{2}F_i}{F_i} = 1/2.
		\end{align*}
		 On the other hand, over $\{|F_i - \mu_i| \leq \mu_i\}$, we have $0\leq F_i \leq 2\mu_i$. This implies that 
		\begin{align*}
			\frac{1}{\tilde{F_i}} \geq \frac{1}{2\mu_i}.
		\end{align*}
		Thus, we get
		\begin{align*}
			\int_{\T^3 \times \R^3} \frac{1}{4\mu_i} |F_i - \mu_i|^2 \chi_{\{|F_i-\mu_i|\leq \mu_i\}} dvdx +\int_{\T^3 \times \R^3} \frac{1}{4} |F_i - \mu_i| \chi_{\{|F_i-\mu_i|\> \mu_i\}}dvdx  \leq \int_{\T^3 \times \R^3} \psi\left(\frac{F_i}{\mu_i}\right) \mu_i dvdx. 
		\end{align*}
		Due to $\psi'(x) =\log x$, we can deduce from the multi-species Boltzmann equation \eqref{mixed bol} that 
		\begin{align*}
			\p_t \left[\sum_{i=1}^N \psi(\frac{F_i}{\mu_i}) \mu_i\right] +\nabla_x \cdot \left[\sum_{i=1}^N\psi(\frac{F_i}{\mu_i}) \mu_i v \right] =\sum_{i,j=1}^N Q_{ij}(F_i,F_j) \log \frac{F_i}{\mu_i}.
		\end{align*}
		Integrating  over $(x,v) \in \T^3 \times \R^3$, it follows from the collision invariance that 
		\begin{align*}
			\frac{d}{dt} \left[\sum_{i=1}^N \int_{\T^3 \times \R^3} \psi(\frac{F_i}{\mu_i}) \mu_i dvdx \right] = \sum_{i,j=1}^N\int_{\T^3 \times \R^3} Q_{ij}(F_i,F_j) \log F_i dvdx \leq 0.
		\end{align*}
		Hence, we conclude that 
		\begin{align*}
			\sum_{i=1}^N \left(\int_{\T^3 \times \R^3} \frac{1}{4\mu_i} |F_i - \mu_i|^2 \chi_{\{|F_i-\mu_i|\leq \mu_i\}} dvdx +\int_{\T^3 \times \R^3} \frac{1}{4} |F_i - \mu_i| \chi_{\{|F_i-\mu_i|\> \mu_i\}}dvdx \right) \leq \mathcal{E}(\pmb{F}_0). 
		\end{align*}
		Moreover, by substituting the perturbation $F_i(t,x,v)= \mu_i(v) + \sqrt{\mu_i(v)} f_i(t,x,v)$, we can derive  
		\begin{align*}
		\frac{1}{4}\sum_{i=1}^N \left(\int_{\T^3 \times \R^3}  |f_i(t,x,v)|^2 \chi_{\{|f_i|\leq \sqrt{\mu_i}\}} dvdx +\int_{\T^3 \times \R^3}  \sqrt{\mu_i(v)}|f_i(t,x,v)| \chi_{\{|f_i|\geq \sqrt{\mu_i}\}}dvdx \right) \leq \mathcal{E}(\pmb{F}_0). 
		\end{align*}
	\end{proof}
	\subsection{Estimates for the Linear Operator}
	In this subsection, we analyze the properties of the linearized collision operator $\pmb{L} = (L_1, \dots, L_N)$. We recall that the linearized operator can be decomposed into the collision frequency $\nu_i(v)$ and an integral operator $K_i$ defined in \eqref{nu_def} and \eqref{def_K}, respectively
	\begin{equation*}
		L_i(\pmb{f}) = -\nu_i(v)f_i + K_i(\pmb{f}).
	\end{equation*}
	 To establish the global well-posedness, it is crucial to understand bounds for $\nu_i$ and the integrability of the operator $K_i$. The following lemma provides estimates for $\nu_i$, showing that they are comparable across different species.
	\begin{lemma}\label{CFE}
		\cite{BD2016}
		For all $i \in \{1, \dots, N\}$, there exist constants $\nu_i^{(0)}, \nu_i^{(1)} > 0$ such that
		\begin{align*}
			\nu_0 \le \nu_i^{(0)} (1 + |v|)^\gamma \le \nu_i(v) \le \nu_i^{(1)} (1 + |v|)^\gamma, \quad \forall v \in \mathbb{R}^3,
		\end{align*}
		where $\nu_0 = \min\{\nu_1^{(0)}, \dots, \nu_N^{(0)}\}$.
		As a consequence, the collision frequencies satisfy
		\begin{align*}
			\nu_i(v) \le \beta \nu_{ii}(v), \quad \forall v \in \mathbb{R}^3,
		\end{align*}
		for some constant $\beta > 0$.
	\end{lemma}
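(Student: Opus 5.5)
The plan is to compute $\nu_i(v)=\sum_j \nu_{ij}(v)$ explicitly. Since $B_{ij}=\Phi_{ij}(|v-v_*|)b_{ij}(\cos\theta)$, integrating in $\sigma$ first gives
\begin{align*}
\nu_{ij}(v)=\|b_{ij}\|_{L^1(\S^2)}\,C^\Phi_{ij}\int_{\R^3}|v-v_*|^\gamma \mu_j(v_*)\,dv_*,
\end{align*}
where $\|b_{ij}\|_{L^1(\S^2)}=2\pi\int_{-1}^1 b_{ij}(s)\,ds\le 4\pi C^b$ is finite by \eqref{kernel}. I will assume, as is implicit in the lemma, that each $b_{ii}$ is not identically zero, so that $\|b_{ii}\|_{L^1(\S^2)}>0$.

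\textbf{Upper bound.} Use $|v-v_*|^\gamma\le (1+|v|)^\gamma(1+|v_*|)^\gamma$, valid for $\gamma\in[0,1]$. The Gaussian moment $\int (1+|v_*|)^\gamma\mu_j(v_*)dv_*$ is finite, so $\nu_{ij}(v)\le C_{ij}(1+|v|)^\gamma$. Summing over $j$ gives the constant $\nu_i^{(1)}$.

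\textbf{Lower bound.} It suffices to bound the single term $\nu_{ii}$ from below, since the other terms are nonnegative. I will show
\begin{align*}
I_j(v):=\int|v-v_*|^\gamma\mu_j(v_*)\,dv_*\ge c_j(1+|v|)^\gamma .
\end{align*}
Split into two regions.
\begin{itemize}
\item For $|v|\ge 2$, restrict to $|v_*|\le 1$. There $|v-v_*|\ge |v|-1\ge \tfrac13(1+|v|)$, so $I_j(v)\ge 3^{-\gamma}(1+|v|)^\gamma\int_{|v_*|\le1}\mu_j$.
\item For $|v|\le 2$, $I_j$ is continuous and strictly positive, because the integrand is positive almost everywhere. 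Hence it has a positive minimum on this compact set, and $(1+|v|)^\gamma\le 3^\gamma$ there.
\end{itemize}
Together these give $\nu_{ii}(v)\ge \nu_i^{(0)}(1+|v|)^\gamma$. Since $\nu_{ij}\ge0$, the same bound holds for $\nu_i$. Taking $\nu_0=\min_i\nu_i^{(0)}$ and using $(1+|v|)^\gamma\ge1$ yields the leftmost inequality.

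\textbf{Consequence.} We now have $\nu_i\le \nu_i^{(1)}(1+|v|)^\gamma\le (\nu_i^{(1)}/\nu_i^{(0)})\,\nu_{ii}(v)$. Setting $\beta=\max_i \nu_i^{(1)}/\nu_i^{(0)}$ gives $\nu_i(v)\le\beta\,\nu_{ii}(v)$.

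The only delicate point is the lower bound near the origin, which the compactness and continuity argument handles. Beyond that, one needs the nondegeneracy assumption on $b_{ii}$, and the masses $m_j$ enter only through the constants.
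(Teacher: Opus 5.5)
Your proof is correct and follows the standard route behind this lemma (the paper gives no proof of its own and simply cites \cite{BD2016}): integrate out $\sigma$ to reduce $\nu_{ij}(v)$ to a constant times $\int_{\R^3}|v-v_*|^\gamma\mu_j(v_*)\,dv_*$, then compare this Gaussian convolution with $(1+|v|)^\gamma$ from above and below. You were also right to prove the lower bound for $\nu_{ii}$ itself, which is what actually makes $\nu_i\le\beta\nu_{ii}$ follow, and to flag the nondegeneracy $b_{ii}\not\equiv 0$: hypothesis (2) in \eqref{kernel} as written allows $b_{ij}\equiv 0$, and in that case both the lower bound and the comparison $\nu_i\le\beta\nu_{ii}$ fail.
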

	To further understand the behavior of the linearized operator, we investigate the pointwise decay properties of the kernels $k_{ij}$ associated with the operator $K_i$. The following lemma shows that these kernels exhibit fast decay under suitable exponential and polynomial weights, which are essential for obtaining estimates in weighted functional spaces.
	\begin{lemma} \cite{BD2016} \label{K est}
		For all $i \in \{1,\cdots, N\}$, there exists $\pmb{k}_i=(k_{ij})_{1\leq j \leq N}$ such that 
		\begin{align*}
			K_i(\pmb{f})(v) =\int_{\R^3} (\pmb{k}_i \cdot \pmb{f} ) dv_* =  \sum_{j=1}^N \int_{\R^3} k_{ij} (v,v_*) f_{j}(v_*) dv_*.  
		\end{align*}
		Let $\beta>0$ and $\theta \in [0,\frac{1}{4})$. Then, there exist $C_{\theta,\beta}>0$ and $\varepsilon_{\theta,\beta}>0$ such that for all $i,j \in \{1,\cdots , N\}$ and $\varepsilon \in [0,\varepsilon_{\theta,\beta})$,
		\begin{align*}
			\int_{\R^3} |k_{ij} (v,v_*)| e^{\varepsilon m |v-v_*|^2 +\varepsilon m \frac{||v|^2-|v_*|^2|^2}{|v-v_*|^2}}\frac{(1+|v|)^{\beta}e^{\theta|v|^2}}{(1+|v_*|)^{\beta}e^{\theta|v_*|^2}}dv_*\leq \frac{C_{\beta,\theta}}{1+|v|}. 
		\end{align*} 
	\end{lemma}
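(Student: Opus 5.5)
The statement just above is Lemma \ref{K est}, quoted from \cite{BD2016}. The plan is to make the kernel $k_{ij}$ explicit and then bound it pointwise by Gaussian-type expressions, following Grad's classical treatment adapted to unequal masses.

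\textbf{Step 1: splitting $K_{ij}$.} Write
\[
K_{ij}=K^{(2)}_{ij,1}+K^{(2)}_{ij,2}-K^{(1)}_{ij},
\]
corresponding to the three terms in the definition of $K_{ij}$.

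The loss-type term $K^{(1)}_{ij}$ has the explicit kernel
\[
k^{(1)}_{ij}(v,v_*)=C\,|v-v_*|^\gamma\sqrt{\mu_i(v)\mu_j(v_*)},
\]
which decays like $e^{-\frac{m_i}{4}|v|^2-\frac{m_j}{4}|v_*|^2}$.

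For the two gain-type terms $f_j(v'_*)$ and $f_i(v')$, perform the Carleman-type change of variables (Lemma \ref{carleman 2} of \cite{BD2016}). This turns the integral over $(v_*,\sigma)$ into an integral over the post-collisional velocity, with the remaining variables running over a sphere or hyperplane. Then rename the integration variable $v_*$.

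\textbf{Step 2: pointwise kernel bounds.} The goal is the Grad-type estimate
\[
|k^{(2)}_{ij}(v,v_*)|\le \frac{C}{|v-v_*|}\exp\Big(-c|v-v_*|^2-c\frac{||v|^2-|v_*|^2|^2}{|v-v_*|^2}\Big),
\]
with $c>0$ depending on the masses. The key computation is the exponential factor: after using energy and momentum conservation, the Gaussian exponents must combine into a negative definite quadratic form. This is the same kind of cancellation highlighted in the introduction, and $R_{vv'_*},O_{vv'_*}$ are substituted explicitly. In the hard-potential range $\gamma\in[0,1]$, the factor $|v-v_*|^\gamma$ is absorbed after slightly shrinking $c$.

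\textbf{Step 3: weighted integral.} Multiply the kernel bound by
\[
e^{\varepsilon m|v-v_*|^2+\varepsilon m\frac{||v|^2-|v_*|^2|^2}{|v-v_*|^2}}\,\frac{(1+|v|)^\beta e^{\theta|v|^2}}{(1+|v_*|)^\beta e^{\theta|v_*|^2}}.
\]
The polynomial ratio is at most $C(1+|v-v_*|)^\beta$. For the Gaussian ratio, write $|v|^2-|v_*|^2=(v-v_*)\cdot(v+v_*)$ and use
\[
\theta\big(|v|^2-|v_*|^2\big)\le \theta|v-v_*|\,\frac{||v|^2-|v_*|^2|}{|v-v_*|}.
\]
By Young's inequality this is controlled by the two quadratic terms already in the exponent. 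Since $\theta<1/4$, a strictly negative exponent remains, provided $\varepsilon<\varepsilon_{\theta,\beta}$ is small.

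We then have to bound
\[
\int |v-v_*|^{-1}e^{-c'|v-v_*|^2-c'\frac{|(v-v_*)\cdot(v+v_*)|^2}{|v-v_*|^2}}\,dv_*.
\]
Set $u=v_*-v$ and decompose it into components parallel and orthogonal to $v$. The second exponential restricts the parallel component to a window of size $\lesssim 1/|v|$, which gives the decay $C/(1+|v|)$. The loss kernel $k^{(1)}$ satisfies the same bound trivially, thanks to its full Gaussian decay.

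\textbf{Main obstacle.} The hardest step is Step 2 for the term involving $f_j(v'_*)$ with $m_i\ne m_j$. There the Jacobian and the exponent contain mass-dependent factors, so the constants $c$ and the admissible range of $\theta$ must be tracked with care. I would first carry out the exponent algebra in the center-of-mass frame, and only afterwards bound the remaining angular integral using $b_{ij}\le C^b|\cos\theta|$.
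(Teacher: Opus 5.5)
The paper gives no proof of this lemma. It imports it from \cite{BD2016}, and later uses it only with $\theta=0$, since the weight is $w=\langle v\rangle^q$.

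For $\theta=0$ your outline closes, with $\varepsilon_{\theta,\beta}$ depending on the masses: explicit loss kernel, Carleman representation for the two gain pieces, a Grad-type Gaussian bound, then the parallel/orthogonal integration giving $C/(1+|v|)$. One bookkeeping point: the $f_i(v')$ piece of $K_{ij}$ contributes to $k_{ii}$, not to $k_{ij}$.

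\textbf{The gap is Step 3 for $\theta>0$.} Absorbing $\theta(|v|^2-|v_*|^2)\le\frac{\theta}{2}\bigl(|v-v_*|^2+\frac{||v|^2-|v_*|^2|^2}{|v-v_*|^2}\bigr)$ requires the Step 2 constant $c$ to exceed $\theta/2$. Your phrase ``since $\theta<1/4$'' silently uses the single-species value $c=1/8$, and for mixtures this fails.

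Consider the $f_j(v'_*)$ piece. The extra factor $\sqrt{\mu_i(v')}$, absent in $\Gamma^+$, adds $-\frac{m_i}{4}(R_{vv_*}-|O_{vv_*}|)^2$ to the merely semi-definite form of Lemma \ref{carleman 3}. The kernel exponent is therefore
\[
-\frac{m_j(A-m_ir)^2+m_i(A-m_jr)^2}{4(m_i-m_j)^2},\qquad A=|m_iv-m_jv_*|,\quad r=|v-v_*|.
\]
\begin{itemize}
\item At $v=\frac{2m_j}{m_i+m_j}v_*$ this equals $-\frac{m_j(m_i-m_j)^2}{4(m_i+m_j)^2}|v_*|^2$. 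This is the backscattering configuration, where the Carleman sphere passes through $v'=0$.
\item The Grad quantity there is of order $|v_*|^2$, so the best $c$ in your Step 2 bound is $O((m_i-m_j)^2)$.
\item Worse, the species-blind weight in the statement contributes $e^{\theta(|v|^2-|v_*|^2)}=\exp\bigl(\theta\frac{(m_j-m_i)(3m_j+m_i)}{(m_i+m_j)^2}|v_*|^2\bigr)$ at that point.
\item This beats the decay as soon as $m_j>m_i$ and $\theta>\frac{m_j(m_j-m_i)}{4(3m_j+m_i)}$. An example is $m_i=1$, $m_j=1.1$, $\theta=0.1$, $b_{ij}(s)=|s|$.
\item The weighted integral then grows like $e^{c|v|^2}$. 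No careful tracking of constants can rescue the estimate with the weight as written.
\end{itemize}

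\textbf{The fix.} Pair the Gaussian weight with the species masses, $e^{\theta m_i|v|^2}/e^{\theta m_j|v_*|^2}$ for $k_{ij}$, and fold it into the collision integrand before the Carleman reduction. In collision variables, with $v'_*$ the kernel variable, energy conservation gives $m_i|v|^2-m_j|v'_*|^2=m_i|v'|^2-m_j|v_*|^2$. Hence
\[
\sqrt{\mu_j(v_*)\mu_i(v')}\,\frac{e^{\theta m_i|v|^2}}{e^{\theta m_j|v'_*|^2}}\lesssim e^{-(\frac14-\theta)m_i|v'|^2-(\frac14+\theta)m_j|v_*|^2}.
\]
Likewise you get $e^{-(\frac14+\theta)m_j|v_*|^2-(\frac14-\theta)m_j|v'_*|^2}$ for the $f_i(v')$ piece and $e^{-(\frac14-\theta)m_i|v|^2-(\frac14+\theta)m_j|v_*|^2}$ for the loss kernel.

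These are dominated by the unweighted integrands with $\frac14$ replaced by $\frac14-\theta>0$. Your Step 2 computation then goes through verbatim, with constants proportional to $\frac14-\theta$. Step 3 only has to absorb the polynomial ratio and the $\varepsilon$-factor, which is where $\varepsilon_{\theta,\beta}$ comes from.
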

	
	\subsection{Pointwise Gain Estimates via Carleman Representation}
To obtain global well-posedness for large-amplitude data, it is imperative to establish precise pointwise estimates for the nonlinear collision operator $(\Gamma^+_{i}(\pmb{f}))_{1\leq i \leq N}$. To this end, in this subsection, we derive a weighted pointwise estimate for
\begin{align*}
	\sum_{j=1}^{N}|w(v)\Gamma^+(f_i,f_j)(v)|.
\end{align*}
By using the energy conservation
	\begin{align*}
		\frac{m_i}{2}|v|^2+\frac{m_j}{2}|v_*|^2=\frac{m_i}{2}|v'|^2+\frac{m_j}{2}|v'_*|^2,
	\end{align*}
	we get 
	\begin{align*}
		\frac{m_i}{2}|v|^2\leq\frac{m_i}{2}|v'|^2+\frac{m_j}{2}|v'_*|^2.
	\end{align*}
	This implies that 
	\begin{align*}
		\frac{m_i}{4} |v|^2 \leq \frac{m_i}{2}|v'|^2 \quad \text{or} \quad \frac{m_i}{4} |v|^2 \leq \frac{m_j}{2} |v_*'|^2. 
	\end{align*}
	Therefore, it follows that
	\begin{align*}
		w(v) \leq 2^{q} w(v') \quad \textrm{or} \quad  w(v) \leq C w(v'_*).
	\end{align*}
	Then we have 
	\begin{align} \label{gain_split}
		|w(v)\Gamma^+(f_i,f_j)| &\leq w(v) \int_{\R^3 \times \S^2}B(v-v_*,\sigma)\sqrt{\mu_j(v_*)}|f_i(v')f_j(v'_*)|d\sigma dv_*\cr
		&\leq C_q  \int_{\R^3 \times \S^2}B_{ij}(v-v_*,\sigma)\sqrt{\mu_j(v_*)}|w(v')f_i(v')f_j(v'_*)|d\sigma dv_*\cr
		&\quad + C_q  \int_{\R^3 \times \S^2}B_{ij}(v-v_*,\sigma)\sqrt{\mu_j(v_*)}|f_i(v')w(v'_*)f_j(v'_*)|d\sigma dv_*\cr
		& \leq C_q\|wf_i(t)\|_{L^\infty}  \int_{\R^3 \times \S^2}B_{ij}(v-v_*,\sigma)\frac{\sqrt{\mu_i(v')}\sqrt{\mu_j(v'_*)}}{\sqrt{\mu_i(v)}}|f_j(v'_*)|d\sigma dv_*\cr
		&\quad + C_q\|wf_j(t)\|_{L^\infty}  \int_{\R^3 \times \S^2}B_{ij}(v-v_*,\sigma)\sqrt{\mu_j(v_*)}|f_i(v')|d\sigma dv_*\cr
		&:=I_1+I_2.
	\end{align}
	
	The two terms $I_1$ and $I_2$ require different treatments due to the mass asymmetry $m_i \neq m_j$. The post-collisional velocity $v'$ depends only on the component of the relative velocity parallel to the collisional direction, regardless of the masses. Hence the term $I_2$ involving $f_i(v')$ admits the orthogonal--parallel decoupling and can be estimated as in the single-species case. In contrast, the post-collisional velocity $v'_*$ depends on both the parallel and orthogonal components, so the term $I_1$ involving $f_j(v'_*)$ does not admit a clean orthogonal--parallel decoupling. This constitutes the genuine multi-species difficulty.
	
	To overcome this difficulty in $I_1$, we recall the Carleman representation for gas mixtures established in \cite{BD2016}, which transforms the collision integral into a form amenable to pointwise analysis by integrating over geometric structures (spheres or planes) in the velocity space.
\begin{lemma}\cite{BD2016}(Carleman representation for the multi-species Boltzmann equation) \label{carleman 2}
	For $i,j \in \{1,\dots,N\}$, the following representation holds:
	\begin{align*}
		&\int_{\R^3 \times \S^2} B_{ij}(v-v_*,\sigma) f(v') g(v'_*) \, d\sigma dv_* \\ 
		&= C\int_{\mathbb{R}^3} \frac{1}{| v-v'_*|} \left( \int_{\tilde{E}^{ij}_{vv'_*}} \frac{B_{ij}\left(v-V(v',v'_*), \frac{v'_*-v'}{|v'_*-v'|}\right)}{|v'_*-v'|} f(v') \, dE(v') \right) g(v'_*) \, dv'_*,        
	\end{align*}
	where 
	\begin{align*}
	V(v',v'_*) := v_*' + \frac{m_i} {m_j} (v' - v). 
	\end{align*}
	The integration domain $\tilde{E}^{ij}_{vv'_*}$ coincides with the hyperplane $E^{ij}_{vv'_*}$ when $m_i = m_j$. In the case $m_i \neq m_j$, it is the sphere centered at $O_{vv'_*}$ with radius $R_{vv'_*}$, defined by
	\begin{align}\label{RO}
		R_{vv'_*} := \frac{m_j}{|m_i - m_j|} |v - v'_*|, \quad O_{vv'_*} := \frac{m_i}{m_i - m_j} v - \frac{m_j}{m_i - m_j} v'_*.
	\end{align}
	Here, $dE$ is the Lebesgue measure on the set $\tilde{E}^{ij}_{vv'_*}$, and $C$ depends only on the masses $m_i$ and $m_j$.
\end{lemma}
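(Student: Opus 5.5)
The statement to prove is Lemma~\ref{carleman 2}, the Carleman representation for mixtures. The plan is to change variables from $(v_*,\sigma)$ to the post-collisional pair $(v',v'_*)$, with $v$ held fixed. Integration over $\R^3\times\S^2$ then becomes integration over $v'_*\in\R^3$ followed by a two-dimensional surface in $v'$. This is the standard Carleman route; the only new feature is the geometry of that surface when $m_i\neq m_j$.

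\textbf{Step 1: switch to $\omega$-parametrization and remove the $\sigma$-integral.} Following the paper's $z=v_*-v$ and the formula $v'=v+\frac{2m_j}{m_i+m_j}z_\parallel$, rewrite the collision in the $\omega$-representation. Write $v'-v=\frac{2m_j}{m_i+m_j}(z\cdot\omega)\omega$. By momentum conservation, $v'_*=v_*-\frac{m_i}{m_j}(v'-v)$, which gives $v_*=V(v',v'_*)$ as in the statement. Introduce the vector $y=\frac{2m_j}{m_i+m_j}(z\cdot\omega)\omega\in\R^3$. The map $\omega\mapsto y$ with $z$ fixed has the familiar Carleman Jacobian $d\omega\sim |y|^{-2}|z\cdot\omega|\,dy\,\delta(\cdot)$. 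In practice one writes $\int_{\S^2}d\omega=\int_{\R^3}2\delta(|y|^2-(z\cdot y)c)\,dy/|y|$ with the appropriate constant $c$. After this, $v'=v+y$ and the constraint is a quadratic relation between $y$ and $z$.

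\textbf{Step 2: change $v_*$ into $v'_*$.} For fixed $y$, the map $v_*\mapsto v'_*=v_*-\frac{m_i}{m_j}y$ is a translation, so $dv_*=dv'_*$. Substitute $z=V-v=v'_*-v+\frac{m_i}{m_j}y$ into the delta constraint. Dividing by constants, energy conservation $m_i(|v'|^2-|v|^2)=m_j(|v_*|^2-|v'_*|^2)$ becomes
\[
m_i|v'-v|^2\Bigl(1+\tfrac{m_i}{m_j}\Bigr)=\ldots
\]
The cleanest way to write the constraint is in terms of $v'$, for fixed $v,v'_*$. Expanding $m_i|v|^2+m_j|V|^2=m_i|v'|^2+m_j|v'_*|^2$ with $V=v'_*+\frac{m_i}{m_j}(v'-v)$ gives
\[
(m_i-m_j)|v'|^2-2(m_iv-m_jv'_*)\cdot v'+\ldots
\]
so the constraint is quadratic in $v'$ with leading coefficient $\frac{m_i}{m_j}(m_i-m_j)$. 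Completing the square shows that the set is the sphere with center $O_{vv'_*}=\frac{m_iv-m_jv'_*}{m_i-m_j}$. A direct computation of the constant term should give the radius $R_{vv'_*}=\frac{m_j}{|m_i-m_j|}|v-v'_*|$. One can check this on the point $v'=v$, i.e. $y=0$ (the grazing collision, which lies on the set): $|v-O|=\frac{m_j}{|m_i-m_j|}|v-v'_*|$. When $m_i=m_j$ the quadratic term vanishes and the set degenerates to the hyperplane $E^{ij}_{vv'_*}$.

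\textbf{Step 3: coarea formula and the Jacobian.} The delta of a function $\Phi(v')$ whose zero set is the sphere gives $dE(v')/|\nabla\Phi|$. Computing $\nabla\Phi$ at a point of the sphere, it should be proportional to $|v'-O|$, so the factor is constant times $|v-v'_*|$, up to the masses. Combined with the $1/|y|$-type factors from Step 1, and using $|y|=|v'-v|$ and $|z\cdot\omega|\sim|v'-v|$, this yields the weights $\frac{1}{|v-v'_*|}\cdot\frac{1}{|v'_*-v'|}$ appearing in the statement. The kernel's angular argument becomes $\frac{v'_*-v'}{|v'_*-v'|}$ after re-expressing $\cos\theta$ in the new variables.

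I expect this Jacobian bookkeeping to be the main obstacle. One must check that the powers of $|v'-v|$ cancel and leave exactly $|v'_*-v'|^{-1}$. The way to do this is to use the identity relating $|v'_*-v'|$ and $|V-v|$ from the relative-speed conservation $|v'-v'_*|=|v-v_*|$, and to absorb all remaining mass factors into $C$.
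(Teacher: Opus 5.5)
The paper does not prove this lemma; it imports it from \cite{BD2016}. Your strategy is the standard Carleman argument: a delta-function change of variables from $(v_*,\sigma)$ to $(v',v'_*)$, then the coarea formula in $v'$. Your Step 2 is correct. With $z=V-v$ the constraint reads $\frac{m_i-m_j}{m_j}|v'-v|^2=2(v-v'_*)\cdot(v'-v)$. Its zero set is the sphere centred at $O_{vv'_*}$ passing through $v'=v$, so its radius is $R_{vv'_*}$. When $m_i=m_j$ it degenerates to the hyperplane $(v-v'_*)\cdot(v'-v)=0$.

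The gap is in the weights, which you defer, and it originates in Step 1. There you pass from $d\sigma$ to $d\omega$ without the Jacobian of $\omega\mapsto\sigma=\hat u-2(\hat u\cdot\omega)\omega$. Set $\kappa=\frac{2m_j}{m_i+m_j}$. The only factors you list are $2/|y|$, from $\int_{\S^2}d\omega=\int_{\R^3}\frac{2}{|y|}\delta(|y|^2-\kappa\, z\cdot y)\,dy$, and $1/|\nabla\Psi|\propto 1/|v-v'_*|$ from the coarea formula. With only these, the result carries the weight $\frac{1}{|v-v'_*|\,|v'-v|}$, not $\frac{1}{|v-v'_*|\,|v'_*-v'|}$. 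No Jacobian factor involving $|V-v|$ ever appears, so relative-speed conservation has nothing to act on. The missing identity is
\[
\int_{\S^2}G\,d\sigma=2\int_{\S^2}G\,\frac{|z\cdot\omega|}{|z|}\,d\omega ,
\]
which comes from $d\sigma=4|\hat u\cdot\omega|\,d\omega$ on a hemisphere, as used in the proof of Lemma~\ref{gain est_new}. Since $|y|=\kappa|z\cdot\omega|$, this factor equals $\frac{|y|}{\kappa|z|}$. It cancels the $1/|y|$ and leaves $1/|z|=1/|v-V|$, which equals $1/|v'-v'_*|$ on the constraint set.

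After the translation $v_*\mapsto v'_*$, the constraint becomes $\Psi(y)=\frac{m_j-m_i}{m_i+m_j}|y|^2+\kappa(v-v'_*)\cdot y=0$. In both cases $|\nabla\Psi|=\kappa|v-v'_*|$ on $\tilde E^{ij}_{vv'_*}$, which gives the lemma with $C=(m_i+m_j)^2/m_j^2$.

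A cleaner route skips $\omega$ altogether. Write $\int_{\S^2}F(|r|\sigma)\,d\sigma=\frac{2}{|r|}\int_{\R^3}\delta(|y|^2-|r|^2)F(y)\,dy$ with $r=v-v_*$ and $y=v'-v'_*$. This produces $1/|r|=1/|v'-v'_*|$ directly, and $(v_*,y)\mapsto(v',v'_*)$ has constant Jacobian $\bigl((m_i+m_j)/m_j\bigr)^3$.

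Finally, since $v'-v'_*=|v-v_*|\sigma$, the angular argument is $(v'-v'_*)/|v'-v'_*|$, not $(v'_*-v')/|v'_*-v'|$ as you (and the statement) wrote. This sign is harmless here, because later only the bound $B_{ij}\le C|v-v_*|^\gamma$ is used.
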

Before deriving pointwise bounds based on the Carleman representation, we provide two technical estimates that will be used later.
\begin{lemma} \label{exp_cal}
		For fixed $x \in \R^3$ and $k>0$, 
		\begin{align*}
			 \int_{\S^2} e^{-k \sigma \cdot x} d\sigma =4\pi \frac{\sinh(k |x|)}{k |x|}.
		\end{align*}
	\end{lemma}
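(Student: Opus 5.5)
The plan is to compute the integral directly in spherical coordinates adapted to $x$. The case $x=0$ is trivial: the integrand is $1$, and the right-hand side is interpreted as its limit $4\pi$, since $\sinh(s)/s\to 1$ as $s\to0$. For $x\neq 0$, the surface measure on $\S^2$ is invariant under rotations. We therefore pick a rotation sending $x/|x|$ to the north pole $e_3$, which reduces the integral to one where $\sigma\cdot x=|x|\cos\theta$. Here $\theta\in[0,\pi]$ is the polar angle and $d\sigma=\sin\theta\, d\theta\, d\phi$ with $\phi\in[0,2\pi)$.

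The integrand does not depend on $\phi$, so that integration contributes a factor $2\pi$. We then substitute $s=\cos\theta$, with $ds=-\sin\theta\,d\theta$, which gives
\begin{align*}
\int_{\S^2} e^{-k\sigma\cdot x}\,d\sigma = 2\pi\int_{-1}^{1} e^{-k|x|s}\,ds = 2\pi\,\frac{e^{k|x|}-e^{-k|x|}}{k|x|} = 4\pi\,\frac{\sinh(k|x|)}{k|x|}.
\end{align*}
This is exactly the claimed formula. The argument uses only $k|x|>0$, so it holds for every $k>0$ and every nonzero $x$.

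There is no real obstacle here. The only points requiring care are the rotation-invariance justification and the degenerate case $x=0$. I expect this identity to be used later as an \emph{upper} bound: after applying the Carleman representation of Lemma~\ref{carleman 2}, the cross term $e^{\frac{m_i}{2}R_{vv'_*}|O_{vv'_*}|}$ appears when integrating over the sphere $\tilde E^{ij}_{vv'_*}$. For that purpose one will use $\sinh(s)/s\le e^{s}/(2s)$ for large $s$, combined with the algebraic cancellation described in the introduction.
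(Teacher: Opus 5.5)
Your proposal is correct and follows essentially the same route as the paper: rotate $x$ onto the $z$-axis, integrate out $\phi$ to get $2\pi$, substitute $y=\cos\theta$, and evaluate $2\pi\int_{-1}^{1}e^{-k|x|y}\,dy$ explicitly. Your separate treatment of $x=0$ via the limit $\sinh(s)/s\to 1$ is a harmless addition that the paper omits.
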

	\begin{proof}
		Since the integral is rotationally invariant, we assume that the fixed vector $x \in \R^3$ is in the $z$-axis. By using the spherical coordinate, we have 
		\begin{align*}
			\int_{\S^2} e^{-k \sigma \cdot x} d\sigma = \int_0^{2\pi} \int_0^{\pi} e^{-k |x| \cos \theta} \sin \theta d\theta d \phi 
			= 2\pi \int_{-1}^{1} e^{-k|x|y} dy 
			= 2\pi \frac{e^{k|x|} - e^{-k|x|}}{k|x|} 
			= 4\pi \frac{\sinh (k|x|)}{k|x|},
		\end{align*}
		where the equality comes from the change of variables $y:= \cos \theta$. 
	\end{proof} 
	\begin{lemma}\label{I1,I2}
		For any $v \in \mathbb{R}^3$ and indices $i, j \in \{1, \dots, N\}$, consider 
		\[
		I(v) := \int_{\mathbb{R}^3} \frac{1}{|m_i v - m_j v'_*|^{5/2}} \frac{1}{1 + |v'_*|} \, dv'_*.
		\]
		Then, there exists a positive constant $C>0$ such that
		\begin{align*}
			I(v) \leq \frac{C}{(1 + |v|)^{1/2}}.
		\end{align*}
	\begin{proof}
		Let 
		\begin{align*}
			m_{\min} := \min_{1 \le k \le N} m_k \quad \textrm{and} \quad m_{\max} :=\max_{1 \le k \le N} m_k.
		\end{align*}
		We define the constants
		\[
		a := \frac{m_{\min}}{2m_{\max}} \quad \textrm{and} \quad A := \frac{2m_{\max}}{m_{\min}}.
		\]
		To prove the estimate for $|v| \ge 1$, we decompose the integration domain into three regions:
		\[
		A_1 := \{v'_* \in \R^3 : |v_*'| \le a|v|\}, \quad
		A_2 := \{v'_* \in \R^3: a|v| < |v_*'| < A|v|\}, \quad
		A_3 := \{v'_* \in \R^3: |v_*'| \ge A|v|\}.
		\]
		Then, we decompose the integral $I(v)$ as $I(v) = I_1(v) + I_2(v) + I_3(v)$, where $I_k$ corresponds to the integral over $A_k$.
		\medskip
		
		\noindent\textbf{Case 1. Estimate on $A_1$} 
		For $v'_* \in A_1$, we have
		\[
		|v'_*| \le a|v| = \frac{m_{\min}}{2m_{\max}}|v|.
		\]
		Since $m_j \le m_{\max}$ and $m_i \ge m_{\min}$, it follows that
		\[
		m_j |v'_*|
		\le m_{\max} \frac{m_{\min}}{2m_{\max}} |v|
		= \frac{m_{\min}}{2}|v|
		\le \frac{m_i}{2}|v|.
		\]
		Hence, by the triangle inequality,
		\[
		|m_i v - m_j v'_*|
		\ge m_i|v| - m_j|v'_*|
		\ge \frac{m_i}{2}|v|
		\ge \frac{m_{\min}}{2}|v|,
		\]
		which implies $|m_i v - m_j v_*'|^{-5/2} \le C|v|^{-5/2}$. Then, we obtain the estimate for $I_1$ as follows   
		\[
		I_1(v) \le C|v|^{-5/2} \int_{|v_*'| \le a|v|} \frac{1}{1+|v_*'|} \, dv_*' \le C|v|^{-5/2} \int_{0}^{a|v|} \frac{r^2}{1+r} \, dr \le C|v|^{-1/2}.
		\]
		This yields the bound  $I_1(v) \leq C(1+|v|)^{-1/2}$ for $|v| \ge 1$.
		
		\medskip
		\noindent\textbf{Case 2. Estimate on $A_2$} 
		For $v'_* \in A_2$, we have 
		\[
		1 + |v'_*| \ge 1 + a|v| \ge C(1+|v|).
		\]
		Using the change of variables $w = m_i v - m_j v'_*$, we have $dv'_* = m_j^{-3} \, dw$. 
		Since $a|v| < |v'_*| < A|v|$, it follows that $|w| = |m_i v - m_j v'_*| \le C|v|$. 
		Therefore,
		\[
		I_2(v) \le \frac{C}{1+|v|} \int_{|w| \le C|v|} \frac{1}{|w|^{5/2}} \, dw 
		= \frac{C}{1+|v|} \int_{0}^{C|v|} r^{2-5/2} \, dr 
		\le \frac{C|v|^{1/2}}{1+|v|} 
		\le C(1+|v|)^{-1/2}.
		\]
		\medskip
		\noindent\textbf{Case 3. Estimate on $A_3$} 
		For $v'_* \in A_3$, we have 
		\[
		|v'_*| \ge A|v| = \frac{2m_{\max}}{m_{\min}}|v|.
		\]
		Since $m_j \ge m_{\min}$ and $m_i \le m_{\max}$, it follows that
		\[
		m_j |v'_*| \ge m_j A|v| \ge 2 m_i |v|.
		\]
		By the triangle inequality,
		\[
		|m_i v - m_j v'_*|
		\ge m_j|v'_*| - m_i|v|
		\ge \frac{m_j}{2}|v'_*|.
		\]
		Therefore,
		\[
		I_3(v) \le C \int_{|v'_*| \ge A|v|} |v'_*|^{-5/2} (1+|v'_*|)^{-1} \, dv'_*
		\le C \int_{A|v|}^{\infty} r^{-3/2} \, dr 
		\le C|v|^{-1/2} 
		\le C(1+|v|)^{-1/2},
		\]
		for $|v| \geq 1$. \\
		\medskip 
		\indent For $|v|<1$, we apply the change of variables $u = m_i v - m_j v'_*$ to obtain
		\[ 
		I(v) = C \int_{\mathbb{R}^3} \frac{1}{|u|^{5/2}} \frac{1}{1 + |m_j^{-1}(m_i v - u)|} \, du. 
		\]
		To show that $I(v)$ is uniformly bounded for $|v|<1$, we split the integral into two parts: $\{|u|\le R\}$ and $\{|u|>R\}$ for sufficiently large $R>0$. For $|u|\le R$, we have $1 + |m_j^{-1}(m_i v - u)| \ge 1$. Thus,
		\[
		\int_{|u| \le R} \frac{1}{|u|^{5/2}} \frac{1}{1 + |m_j^{-1}(m_i v - u)|} \, du 
		\le \int_{|u| \le R} \frac{1}{|u|^{5/2}} \, du 
		= 4\pi \int_0^R r^{2-5/2} \, dr 
		= 8\pi R^{1/2} < \infty.
		\]
		For $|u|>R$ and $|v|<1$, we have
		\[
		|m_j^{-1}(m_i v - u)| \ge m_j^{-1}(|u| - m_i|v|) \ge m_j^{-1}(|u| - m_{\max}).
		\]
		Choosing $R$ sufficiently large so that $|u| - m_{\max} \ge \frac{1}{2}|u|$, we obtain
		\[
		\int_{|u| > R} \frac{1}{|u|^{5/2}} \frac{1}{1 + |m_j^{-1}(m_i v - u)|} \, du 
		\le C \int_{|u| > R} \frac{1}{|u|^{5/2}} \frac{1}{|u|} \, du 
		= C \int_R^\infty r^{2-7/2} \, dr 
		= C R^{-1/2} < \infty.
		\]
		Since both terms are bounded independently of $v$, we obtain $\sup_{|v|<1} I(v) \le C$. 
		Combining this with the case $|v|\ge 1$, we conclude that
		\[
		I(v) \leq \frac{C}{(1+|v|)^{1/2}}, \quad \forall v \in \mathbb{R}^3.
		\]
	\end{proof}
	\end{lemma}
We now state the main estimate of this subsection. When the Carleman representation of Lemma \ref{carleman 2} is  applied directly to the gain integral, five exponential factors appear involving $|v|^2$, $|v'_*|^2$, $R^2_{vv'_*}$, $|O_{vv'_*}|^2$, and the cross-term $R_{vv'_*}|O_{vv'_*}|$. Some of these factors decay while others grow in the velocity variable. In particular, the cross-term in particular obstructs a direct sign analysis. The lemma below shows that, after substituting the explicit forms of $R_{vv'_*}$ and $O_{vv'_*}$ from \eqref{RO}, all five factors combine into a single negative semi-definite quadratic form. This structure provides sufficient decay in the velocity variable to establish the pointwise gain estimate. To our knowledge, this cancellation has not previously appeared in the literature on multi-species kinetic equations.
\begin{lemma}\label{carleman 3}
		Let $w(v)$ denote the velocity weight function defined in \eqref{v_weight}. For all $i,j \in \{1,\cdots, N\}$, we have the following estimate 
		\begin{align} \label{main carleman}
				& \int_{\R^3 \times \S^2}|v-v_*|^\gamma b_{ij}(\cos\theta)\frac{\sqrt{\mu_i(v')}\sqrt{\mu_j(v'_*)}}{\sqrt{\mu_i(v)}}|f_j(v'_*)|d\sigma dv_* \cr
				&\leq  C\int_{\R^3} \frac{1}{|m_iv - m_j v'_*|}\frac{1}{|v-v'_*|^{1-\gamma}} (1+|v'_*|)^{-q} |wf_j(v'_*)| dv'_*,
		\end{align}
		for some constant $C>0$. 
\end{lemma}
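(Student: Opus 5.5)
The plan is to apply the Carleman representation of Lemma \ref{carleman 2} with $f(v')=\sqrt{\mu_i(v')}$ and $g(v'_*)=\sqrt{\mu_j(v'_*)}\,|f_j(v'_*)|$, and then to integrate the Gaussian in $v'$ exactly over the sphere $\tilde E^{ij}_{vv'_*}$. I treat $m_i\neq m_j$ first; the equal-mass case, where $\tilde E^{ij}_{vv'_*}$ is a hyperplane, should follow from the classical single-species Carleman argument. Writing $|f_j|=w^{-1}|wf_j|$ produces the factor $(1+|v'_*|)^{-q}$ up to a constant, so everything reduces to bounding the kernel
\begin{align*}
\mathcal K(v,v'_*):=\frac{e^{\frac{m_i}{4}|v|^2}e^{-\frac{m_j}{4}|v'_*|^2}}{|v-v'_*|}\int_{\tilde E^{ij}_{vv'_*}}\frac{|v-V|^\gamma}{|v'_*-v'|}\,e^{-\frac{m_i}{4}|v'|^2}\,dE(v'),
\end{align*}
using $b_{ij}\le C^b$. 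The claim to prove is $\mathcal K(v,v'_*)\lesssim |m_iv-m_jv'_*|^{-1}\,|v-v'_*|^{\gamma-1}$.

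\textbf{Step 1: the kernel factor.} In the Carleman parametrization $V=v_*$ is the pre-collisional partner, and elastic collisions preserve relative speed, so $|v'_*-v'|=|v-v_*|$. Hence $|v-V|^\gamma/|v'_*-v'|=|v-v_*|^{\gamma-1}$. From \eqref{post-collision v},
\[
v'_*-v=\frac{m_j(v_*-v)-m_i|v-v_*|\sigma}{m_i+m_j},
\]
so $|v-v'_*|\le|v-v_*|$. Since $\gamma\le1$, this gives $|v-v_*|^{\gamma-1}\le|v-v'_*|^{\gamma-1}$, a factor independent of $v'$ that can be pulled out of the sphere integral.

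\textbf{Step 2: the sphere integral.} Parametrize $v'=O_{vv'_*}+R_{vv'_*}\sigma$ with $\sigma\in\S^2$, so that $dE=R^2\,d\sigma$. Expanding $|v'|^2=R^2+|O|^2+2R\,\sigma\cdot O$ and applying Lemma \ref{exp_cal} gives
\begin{align*}
\int_{\tilde E^{ij}_{vv'_*}}e^{-\frac{m_i}{4}|v'|^2}dE
=R^2e^{-\frac{m_i}{4}(R^2+|O|^2)}\,4\pi\frac{\sinh(\frac{m_i}{2}R|O|)}{\frac{m_i}{2}R|O|}
\le \frac{C\,R}{|O|}\,e^{-\frac{m_i}{4}(R-|O|)^2}.
\end{align*}
By \eqref{RO}, $R\sim|v-v'_*|$ and $|O|=|m_iv-m_jv'_*|/|m_i-m_j|$. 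The prefactor $R/(|v-v'_*|\,|O|)$ is therefore $\lesssim |m_iv-m_jv'_*|^{-1}$, which is exactly the singular factor in \eqref{main carleman}.

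\textbf{Step 3: the cancellation (main obstacle).} It remains to show that the combined exponent satisfies
\begin{align*}
\frac{m_i}{4}|v|^2-\frac{m_j}{4}|v'_*|^2-\frac{m_i}{4}\bigl(R-|O|\bigr)^2
=-\frac{m_j}{4|m_i-m_j|^2}\bigl(|m_iv-m_jv'_*|-m_i|v-v'_*|\bigr)^2\le0.
\end{align*}
I would verify this by multiplying through by $(m_i-m_j)^2$ and substituting $(m_i-m_j)^2R^2=m_j^2|v-v'_*|^2$ and $(m_i-m_j)^2|O|^2=|m_iv-m_jv'_*|^2$. After expanding the squares, the terms in $|v|^2$, $|v'_*|^2$ and $v\cdot v'_*$ should collect into $m_j$ times a perfect square in $|m_iv-m_jv'_*|$ and $m_i|v-v'_*|$; only the cross term $-2R|O|$ carries the absolute values. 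This bookkeeping is where I expect errors to arise, and it needs a careful check. Once the identity holds, the exponential factor is bounded by $1$.

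Combining Steps 1--3 gives $\mathcal K(v,v'_*)\lesssim|m_iv-m_jv'_*|^{-1}|v-v'_*|^{\gamma-1}$. Inserting this into the $dv'_*$ integral yields \eqref{main carleman}.
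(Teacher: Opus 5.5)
Your proposal is correct and follows the paper's proof step for step. Both arguments use the Carleman representation, the bound $|v-v'_*|\le|v'-v'_*|$ to extract the factor $|v-v'_*|^{\gamma-1}$, the $\sinh$ formula of Lemma \ref{exp_cal} on the sphere $\tilde{E}^{ij}_{vv'_*}$ (giving the prefactor $R_{vv'_*}/(|v-v'_*|\,|O_{vv'_*}|)\lesssim|m_iv-m_jv'_*|^{-1}$), and the completion of the square in the exponent, followed by bounding the resulting exponential by $1$.

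The identity in your Step 3 does hold, and checking it is exactly the computation the paper writes out. After multiplying by $4(m_i-m_j)^2$, both sides have the coefficients
\begin{align*}
-2m_i^2m_j \ \text{ on } |v|^2, \qquad -m_j(m_i^2+m_j^2) \ \text{ on } |v'_*|^2, \qquad 2m_im_j(m_i+m_j) \ \text{ on } v\cdot v'_*,
\end{align*}
and the same cross term $2m_im_j|v-v'_*|\,|m_iv-m_jv'_*|$.
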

\begin{proof}
	  We use the Carleman representation in Lemma \ref{carleman 2} to estimate the left-hand side of \eqref{main carleman}  
	\begin{align*}
		 &\int_{\R^3 \times \S^2}|v-v_*|^\gamma b_{ij}(\cos\theta)\frac{\sqrt{\mu_i(v')}\sqrt{\mu_j(v'_*)}}{\sqrt{\mu_i(v)}}|f_j(v'_*)|d\sigma dv_* \cr 
		 & \leq C\int_{\R^3}\frac{1}{|v-v'_*|}\frac{\sqrt{\mu_j(v'_*)}}{\sqrt{\mu_i(v)}}\int_{\tilde{E}^{ij}_{vv'_*}} \frac{1}{|v'_*-v'|^{1-\gamma}} \sqrt{\mu_i(v')} dE(v')|f_j(v'_*)| dv'_*.
		\end{align*}
	Since $\tilde{E}^{ij}_{vv'_*}$ is the sphere defined in \eqref{RO}, we make a change of variables to end up on $\S^2$: 
	\begin{align*}
			\frac{1}{|v-v'_*|} \int_{\tilde{E}^{ij}_{vv'_*}} \frac{1}{|v'_*-v'|^{1-\gamma}} \sqrt{\mu_i(v')} dE(v') &\leq C|v-v'_*|^\gamma\int_{\S^2} e^{-\frac{m_i}{4} |R_{vv'_*}\sigma + O_{vv'_*}|^2} d\sigma\cr
			&=C|v-v'_*|^\gamma e^{-\frac{m_i}{4} R^2_{vv'_*}}  e^{-\frac{m_i}{4} |O_{vv'_*}|^2} \int_{\S^2}  e^{-\frac{m_i}{2} R_{vv'_*} \sigma \cdot O_{vv'_*}} d\sigma,
		\end{align*}
		where we used $|v-v'_*|^{1-\gamma} \leq |v'-v'_*|^{1-\gamma}$ for $0\leq \gamma \leq 1 $. Using Lemma \ref{exp_cal}, we can further estimate 
	\begin{align*}
		& \frac{1}{|v-v'_*|} \frac{\sqrt{\mu_j(v'_*)}}{\sqrt{\mu_i(v)}}\int_{\tilde{E}^{ij}_{vv'_*}} \frac{1}{|v'_*-v'|^{1-\gamma}}\sqrt{\mu_i(v')} dE(v') \\
		&\leq C |v-v'_*|^{\gamma}e^{-\frac{m_j}{4} |v'_*|^2} e^{\frac{m_i}{4} |v|^2} e^{-\frac{m_i}{4} R^2_{vv'_*}}  e^{-\frac{m_i}{4} |O_{vv'_*}|^2}\frac{e^{\frac{m_i}{2} R_{vv'_*} |O_{vv'_*}|}}{R_{vv'_*} |O_{vv'_*}|}\\
		&= \frac{C}{|m_i v - m_j v'_*| }\frac{1}{|v-v'_*|^{1-\gamma}}e^{-\frac{m_j}{4} |v'_*|^2} e^{\frac{m_i}{4} |v|^2} e^{-\frac{m_i}{4} R^2_{vv'_*}}  e^{-\frac{m_i}{4} |O_{vv'_*}|^2}e^{\frac{m_i}{2} R_{vv'_*} |O_{vv'_*}|}.
	\end{align*}
	Using the definition of the radius $R_{vv'_*}$ and center $O_{vv'_*}$ of the sphere $\tilde{E}_{vv'_*}^{ij}$, we rewrite the exponent in the upper bound above: 
	\begin{align*}
		&-\frac{m_j}{4} |v'_*|^2 + \frac{m_i}{4} |v|^2 -\frac{m_im_j^2}{4|m_i - m_j|^2} |v-v'_*|^2 \\
		&\quad -\frac{m_i}{4|m_i-m_j|^2} |m_i v - m_j v'_*|^2 + \frac{m_im_j}{2|m_i-m_j|^2} |v-v'_*| |m_i v - m_j v'_*|\\
		&= 	-\frac{m_j}{4} |v'_*|^2 + \frac{m_i}{4} |v|^2 - \frac{m_im_j^2}{4|m_i-m_j|^2} |v|^2 - \frac{m_i m_j^2}{4|m_i-m_j|^2} |v'_*|^2 + \frac{m_i m_j^2}{2|m_i-m_j|^2} v \cdot v'_* \\
		&\quad - \frac{m_i^3}{4|m_i-m_j|^2} |v|^2 + \frac{m_i^2 m_j}{2|m_i-m_j|^2} v \cdot v'_* -\frac{m_i m_j^2}{4|m_i-m_j|^2} |v'_*|^2 + \frac{m_im_j}{2|m_i-m_j|^2} |v-v'_*| |m_i v - m_j v'_*| \\
		&= -\left(\frac{m_j}{4}+\frac{m_im_j^2}{2|m_i-m_j|^2}\right) |v'_*|^2 - \left(\frac{m_i (m_i^2 +m_j^2)}{4|m_i-m_j|^2} - \frac{m_i}{4}\right) |v|^2 \\
		&\quad + \frac{m_im_j(m_i+m_j)}{2|m_i-m_j|^2} v \cdot v'_* + \frac{m_im_j}{2|m_i-m_j|^2} |v-v'_*| |m_i v - m_j v'_*| \\ 
		&= - \left( \frac{m_j (m_i^2+m_j^2)}{4|m_i - m_j|^2}\right) |v'_*|^2 - \left(\frac{2m_i^2 m_j}{4|m_i-m_j|^2}\right) |v|^2 \\
		&\quad + \frac{m_im_j(m_i+m_j)}{2|m_i-m_j|^2} v \cdot v'_*+ \frac{m_im_j}{2|m_i-m_j|^2} |v-v'_*| |m_i v - m_j v'_*|  \\
		&= -\frac{m_j}{4|m_i-m_j|^2} \left((m_i^2 +m_j^2) |v'_*|^2 +2m_i^2 |v|^2 - 2m_i (m_i+m_j) v\cdot v'_*\right)+ \frac{m_im_j}{2|m_i-m_j|^2} |v-v'_*| |m_i v - m_j v'_*| \\ 
		&=-\frac{m_j}{4|m_i-m_j|^2} \left(|m_i v -m_j v'_*| ^2 + m_i^2 |v'_*|^2 +m_i^2 |v|^2 -2m_i^2 v \cdot v'_*\right)+ \frac{m_im_j}{2|m_i-m_j|^2} |v-v'_*| |m_i v - m_j v'_*| \\
		&=-\frac{m_j}{4|m_i-m_j|^2} \left( |m_iv - m_j v'_*|^2 + m_i^2 |v-v'_*|^2\right)+ \frac{m_im_j}{2|m_i-m_j|^2} |v-v'_*| |m_i v - m_j v'_*| \\
		&= -\frac{m_j}{4|m_i-m_j|^2} \left(|m_i v -m_j v'_*|- m_i |v-v'_*|\right)^2.
	\end{align*}
	Thus, one obtains
	\begin{align*}
			&\frac{C}{|m_i v - m_j v'_*| }\frac{1}{|v-v'_*|^{1-\gamma}}e^{-\frac{m_j}{4} |v'_*|^2} e^{\frac{m_i}{4} |v|^2} e^{-\frac{m_i}{4} R^2_{vv'_*}}  e^{-\frac{m_i}{4} |O_{vv'_*}|^2}e^{\frac{m_i}{2} R_{vv'_*} |O_{vv'_*}|}\cr
			&=  \frac{C}{|m_iv - m_j v'_*|} \frac{1}{|v-v'_*|^{1-\gamma}}e^{ -\frac{m_j}{4|m_i-m_j|^2} \left(|m_i v -m_j v'_*|- m_i |v-v'_*|\right)^2}.
		\end{align*}
		and consequently
	\begin{align*}
	&\int_{\R^3} \frac{1}{|m_iv - m_j v'_*|}\frac{1}{|v-v'_*|^{1-\gamma}} e^{ -\frac{m_j}{4|m_i-m_j|^2} \left(|m_i v -m_j v'_*|- m_i |v-v'_*|\right)^2} |f_j (v'_*)| dv'_* \cr
		&  \leq C\int_{\R^3} \frac{1}{|m_iv - m_j v'_*|}\frac{1}{|v-v'_*|^{1-\gamma}} (1+|v'_*|)^{-q} |wf_j(v'_*)| dv'_*.
	\end{align*}
\end{proof}
Combining Lemma~\ref{carleman 3} with Lemma~\ref{I1,I2}, we derive pointwise bounds for the gain term $\Gamma^+_{i}(\pmb{f})$. The following lemma provides the key estimate controlling the nonlinear gain term in terms of the weighted $L^\infty$ and $L^p$ norms of the perturbations.
		\begin{lemma} \label{gain est_new}
		Let $w(v)$ be defined as in \eqref{v_weight}. Assume that $\gamma \in [0,1]$. Then there exists a positive constant $C_{*,1}=C_{*,1}(q,\gamma)>0$ depending on $q$ and  $\gamma$ such that 
			\begin{align*}
					\sum_{j=1}^{N}|w(v)\Gamma^+(f_i,f_j)(v)| &\leq C_{*,1}\sum_{j=1}^{N}\frac{\|wf_j(t)\|_{L^\infty}}{1+|v|}\left(\int_{\R^3} (1+|\eta|)^{-2q+4}|wf_i(\eta)|^2d\eta\right)^\frac{1}{2}\cr
				&\quad +C_{*,1}  \frac{\Vert wf_i(t)\Vert_{L^\infty}}{(1+|v|)^{\frac{6}{5}-\gamma}}\sum_{j=1}^{N}\left( \int_{\R^3} (1+|\eta|)^{-5q + 10 } |wf_j(\eta)|^5 d\eta\right)^\frac{1}{5},
			\end{align*}
			where $\Gamma^+(f_i,f_j)(v)= \int_{\R^3 \times \S^2} B_{ij} (v-v_*,\sigma) \sqrt{\mu_j(v_*)} f_i(v') f_j(v_*')d\sigma dv_*$. 
		\end{lemma}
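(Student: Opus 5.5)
Starting from the split \eqref{gain_split}, we write $|w\Gamma^+(f_i,f_j)|\le I_1+I_2$ and treat the two terms separately. Note which norm each term carries: $I_1$ has $\|wf_i\|_{L^\infty}$ in front and still contains $|f_j(v'_*)|$, while $I_2$ has $\|wf_j\|_{L^\infty}$ in front and contains $|f_i(v')|$. So $I_1$ should produce the second line of the claimed bound, with the $L^5$ norm of $wf_j$ and decay $(1+|v|)^{-(6/5-\gamma)}$. $I_2$ should produce the first line, with the $L^2$ norm of $wf_i$ and decay $(1+|v|)^{-1}$.

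\textbf{Term $I_1$.} Use $b_{ij}\le C^b$ and apply Lemma \ref{carleman 3}. This gives
\[
I_1\le C\|wf_i\|_{L^\infty}\int_{\R^3}\frac{1}{|m_iv-m_jv'_*|}\frac{1}{|v-v'_*|^{1-\gamma}}(1+|v'_*|)^{-q}|wf_j(v'_*)|\,dv'_*.
\]
Apply H\"older with exponents $(5/4,5)$. Put $(1+|v'_*|)^{-q+2}|wf_j|$ in $L^5$, which yields exactly the weight $(1+|\eta|)^{-5q+10}$. The factor $(1+|v'_*|)^{-2}$ goes with the kernel. The kernel factor is then
\[
\Big(\int |m_iv-m_jv'_*|^{-5/4}|v-v'_*|^{-5(1-\gamma)/4}(1+|v'_*|)^{-5/2}dv'_*\Big)^{4/5}.
\]
To control it, split the domain into $\{|v-v'_*|\le1\}$ and $\{|v-v'_*|>1\}$.
\begin{itemize}
\item On $\{|v-v'_*|>1\}$, bound $|v-v'_*|^{-(1-\gamma)}\le 1$.
\item On $\{|v-v'_*|\le1\}$, use $1+|v'_*|\sim1+|v|$. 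The singular factor $|v-v'_*|^{-5(1-\gamma)/4}$ is integrable since $5/4<3$. The factor $|m_iv-m_jv'_*|\gtrsim |v|$ when $m_i\ne m_j$ and $|v|$ is large, by the triangle inequality as in Lemma \ref{I1,I2}.
\end{itemize}
The remaining integral has the form of Lemma \ref{I1,I2}, with powers $5/4$ and $5/2$ in place of $5/2$ and $1$, and is handled by the same three-region argument $A_1,A_2,A_3$. It decays like a negative power of $1+|v|$. Raising to the $4/5$ should give at least $(1+|v|)^{-(6/5-\gamma)}$.

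The $\gamma$-loss comes from the large $|v-v'_*|$ region. There I would rather keep $|v-v'_*|^{\gamma-1}\lesssim (1+|v|)^{\gamma-1}$ on the region $A_1$ and track the powers carefully. This exponent bookkeeping is the step I expect to be the main obstacle: obtaining exactly $6/5-\gamma$ requires choosing in each region whether to spend the factor $(1+|v'_*|)^{-2}$ or the factor $|v-v'_*|^{\gamma-1}$.

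\textbf{Term $I_2$.} This term contains $f_i(v')$, and $v'=v+\frac{2m_j}{m_i+m_j}z_\parallel$ depends only on the parallel component. So the single-species Carleman argument applies. Write $\sqrt{\mu_j(v_*)}$ in $(z_\parallel,z_\perp)$ variables and integrate out $z_\perp$ over the plane orthogonal to $z_\parallel$ using the Gaussian. This gives a kernel bounded by
\[
C|v-v'|^{-2+\gamma}\ \text{(or } |v-v'|^{-2}\text{ in the worst case)}\;e^{-c|z_\parallel+\cdots|^2}.
\]
Insert $w(v')^{-1}w(v')|f_i(v')|$ and apply Cauchy--Schwarz in $v'$, putting $(1+|v'|)^{-q+2}|wf_i|$ in $L^2$. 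The resulting kernel integral is $\int |v-v'|^{-4+2\gamma}e^{-c(\dots)}(1+|v'|)^{-4}dv'$, and it should be bounded by $(1+|v|)^{-2}$ by the standard Grad-type estimate used in \cite{KLP2022,DKL2023}. This gives the factor $(1+|v|)^{-1}$. Near $v'=v$ the singularity is integrable with the $\gamma$ adjustment, using the Gaussian factor in $z_\parallel$ and the angular cutoff.

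Finally, sum over $j$.
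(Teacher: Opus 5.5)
Your architecture is the paper's: the split \eqref{gain_split}, Lemma \ref{carleman 3} plus H\"older with exponents $(5/4,5)$ for $I_1$, and the $(z_\parallel,z_\perp)$ decomposition plus Cauchy--Schwarz for $I_2$. However, the kernel bound in $I_2$ fails for small $\gamma$. The decay count in $I_1$, which is where the exponent $6/5-\gamma$ comes from, is left open, and the fix you sketch for it does not work.

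\textbf{Term $I_2$.} After the change of variables the integrand is, up to constants, $|z_\parallel|^{-1}|z|^{\gamma-1}e^{-\frac{m_j}{4}|v+z|^2}|f_i(v')|$.

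To keep the Gaussian you spend $|z|^{\gamma-1}\le|z_\parallel|^{\gamma-1}$ and obtain the kernel $|v-v'|^{\gamma-2}$. Cauchy--Schwarz then requires $\int|v-v'|^{2\gamma-4}(1+|v'|)^{-4}\,dv'<\infty$. This integral diverges at $v'=v$ whenever $4-2\gamma\ge 3$, i.e.\ for every $\gamma\in[0,1/2]$; your ``worst case'' $|v-v'|^{-2}$ squares to $|v-v'|^{-4}$.

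Neither rescue you mention works:
\begin{itemize}
\item The Gaussian does not help: along a ray $z_\parallel=r\omega$ with $r\to0$ it tends to $e^{-c(v\cdot\omega)^2}>0$.
\item The angular cutoff does not help: small $|z_\parallel|/|z|=|\hat u\cdot\omega|=\sin(\theta/2)$ means grazing collisions $\cos\theta\to1$, where $b_{ij}\le C^b|\cos\theta|$ is merely bounded.
\end{itemize}
Since the target is an $L^2$ norm, you cannot change the H\"older exponent. The singularity is an artifact of the bound you chose.

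The paper instead uses $|z|^{\gamma-1}\le|z_\perp|^{\gamma-1}$. The planar integral $\int_{z_\perp\perp z_\parallel}|z_\perp|^{\gamma-1}e^{-\frac{m_j}{4}|v+z_\parallel+z_\perp|^2}dz_\perp$ is then bounded by a constant, leaving $\int|f_i(\eta)|\,|\eta-v|^{-1}d\eta$ with $\eta=v'$. Finally $\int|\eta-v|^{-2}(1+|\eta|)^{-4}d\eta\lesssim(1+|v|)^{-2}$ gives the first line for all $\gamma\in[0,1]$, with no Gaussian decay needed.

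\textbf{Term $I_1$.} Set $J(v):=\int|m_iv-m_jv'_*|^{-5/4}|v-v'_*|^{-\frac54(1-\gamma)}(1+|v'_*|)^{-5/2}\,dv'_*$. You need $J^{4/5}\lesssim(1+|v|)^{-(6/5-\gamma)}$.

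If you discard $|v-v'_*|^{\gamma-1}$ on $\{|v-v'_*|>1\}$, each of $A_1,A_2,A_3$ contributes $|v|^{-3/4}$. That gives only $J^{4/5}\lesssim(1+|v|)^{-3/5}$, which is too weak for $\gamma<3/5$. Restoring the factor on $A_1$ alone does not help, because $A_2$ by itself gives $(1+|v|)^{-5/2}\int_{|w|\lesssim|v|}|w|^{-5/4}dw\sim|v|^{-3/4}$.

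The paper separates the two singular factors by a second Cauchy--Schwarz:
\[
J(v)\le\Big(\int\frac{dv'_*}{|v-v'_*|^{\frac52(1-\gamma)}(1+|v'_*|)^{4}}\Big)^{\frac12}\Big(\int\frac{dv'_*}{|m_iv-m_jv'_*|^{\frac52}(1+|v'_*|)}\Big)^{\frac12}\lesssim(1+|v|)^{-\frac54(1-\gamma)}(1+|v|)^{-\frac14}.
\]
The first factor is a standard convolution bound. The second is exactly Lemma \ref{I1,I2}; this is why that lemma carries the exponents $5/2$ and $1$, not the $5/4$ and $5/2$ you tried to feed it. Raising $(1+|v|)^{-(3/2-5\gamma/4)}$ to the power $4/5$ gives precisely $6/5-\gamma$.

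Your three-region route can also be closed, but only by keeping $|v-v'_*|^{-\frac54(1-\gamma)}$ on all three regions:
\begin{itemize}
\item on $A_1$, use $|v-v'_*|\gtrsim|v|$;
\item on $A_3$, use $|v-v'_*|\gtrsim|v'_*|$;
\item on $A_2$, rescale $v'_*=|v|y$; the combined singular order $\frac54+\frac54(1-\gamma)\le\frac52<3$ is integrable.
\end{itemize}
This gives $J\lesssim(1+|v|)^{-2+5\gamma/4}$, which is more than enough.
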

		\begin{proof} Recalling the decomposition $|w(v)\Gamma^+(f_i,f_j)(v)| \leq I_1 + I_2$ in \eqref{gain_split}, we estimate the two terms separately.
			
			\medskip
			\noindent\textbf{Estimate of $I_2$:} We adopt the $\omega$-parameterization instead of $\sigma$-representation, where the post-collisional velocities $v'$ and $v'_*$ are defined by:
			\begin{align}\label{v'v'*}
					v' &= v + \frac{2m_j}{m_i+m_j} \left( (v_*-v)\cdot \omega \right) \omega, \cr
					v'_* &= v_* - \frac{2m_i}{m_i+m_j} \left( (v_*-v)\cdot \omega \right) \omega.
			\end{align}
			From the representations \eqref{post-collision v} and \eqref{v'v'*} for the post-collisional velocities, we derive the following relation 
			\begin{align*}
			\sigma = \hat{u} -2(\hat{u} \cdot \omega)\omega \quad \textrm{and} \quad |\hat{u} \cdot \omega| = \sin (\theta/2),
			\end{align*}
			where $\hat{u}= \frac{v-v_*}{|v-v_*|}$ and $\cos \theta=  \hat{u} \cdot \sigma$. 
			The Jacobian of the map $\sigma \in \mathbb{S}^2 \mapsto \omega \in \mathbb{S}^2$ becomes 
			\begin{align*}
				d\sigma = 4 |\hat{u} \cdot \omega| d\omega.
			\end{align*}
			By the change of variables from $\sigma$ to $\omega$ by using the relation  above, $I_2$ becomes
			\begin{align*}
				I_2 &= C_q\|wf_j(t)\|_{L^\infty} \int_{\R^3 \times \S^2} B_{ij}(v-v_*,\sigma) \sqrt{\mu_j(v_*)} |f_i(v')| \, d\sigma dv_* \cr
				&= 4 C_q\|wf_j(t)\|_{L^\infty} \int_{\R^3 \times \S^2} B_{ij}(v-v_*,\omega) \left| \frac{v-v_*}{|v-v_*|} \cdot \omega \right| \sqrt{\mu_j(v_*)} |f_i(v')| \, d\omega dv_*.
			\end{align*}
			We denote $v'_*=v+z-\frac{2m_i}{m_i+m_j}z_\parallel$, $v'=v+\frac{2m_j}{m_i+m_j}z_\parallel$ with $z=v_*-v$, $z_\parallel=(z\cdot\omega)\omega$, $z_\perp=z-(z\cdot\omega)\omega$, and using the change of variables $v_* \rightarrow z$, then we have 
			\begin{align*}
			&\int_{\R^3 \times \S^2} B_{ij}(v-v_*,\omega) \left| \frac{v-v_*}{|v-v_*|} \cdot \omega \right| \sqrt{\mu_j(v_*)} |f_i(v')| \, d\omega dv_*\\
			&\leq C\int_{\R^3 \times \S^2}\frac{|z_\parallel|}{(|z_\parallel|^2+|z_\perp|^2)^{\frac{1-\gamma}{2}}}e^{-\frac{m_j}{4}|v+z|^2}\left|f_i\left(v+\frac{2m_j}{m_i+m_j}z_\parallel\right)\right|d\omega dz.	 
				\end{align*}
			Here, we have used \eqref{kernel} and the following estimate
			\begin{align*}
					B_{ij}(v-v_*,\omega)\left| \frac{v-v_*}{|v-v_*|} \cdot \omega \right| 
					\leq C^{\Phi}_{ij}|v-v_*|^\gamma\left|\frac{v-v_*}{|v-v_*|}\cdot\omega\right|
					\leq C_{\Phi}|z_\parallel|\cdot|z|^{\gamma-1}
					=C_{\Phi}|z_\parallel|\cdot\left(|z_\perp|^2+|z_\parallel|^2\right)^\frac{\gamma-1}{2},
			\end{align*}
			where $C_{\Phi}:= \max_{1\leq i,j \leq N} C_{ij}^{\Phi}>0$. 
			For fixed $\omega \in \S^2$, the change of variables $z\rightarrow (z_{\parallel},z_\perp)$ is a rotation with unit Jacobian. Thus, 
			\begin{align*}
				d\omega dz = d\omega 2 dz_\perp d|z_\parallel| = 2 dz_{\perp} \frac{|z_{\parallel}|^2 d|z_{\parallel}| d\omega}{|z_{\parallel}|^2} = \frac{2dz_{\perp}dz_{\parallel}}{|z_{\parallel}|^2}.
			\end{align*}
			Then, 
			\begin{align}\label{I-es}
				&\int_{\R^3 \times \S^2}|z_\parallel|(|z_\parallel|^2+|z_\perp|^2)^{\frac{\gamma-1}{2}}e^{-\frac{m_j}{4}|v+z|^2}|f_i(v+\frac{2m_j}{m_i+m_j}z_\parallel)|dzd\omega \cr
				&= 	\int_{\R^3}\frac{1}{|z_{\parallel}|}\int_{z_{\parallel}\cdot z_\perp =0}(|z_\parallel|^2+|z_\perp|^2)^{\frac{\gamma-1}{2}}e^{-\frac{m_j}{4}|v+z_{\parallel}+z_\perp|^2}|f_i(v+\frac{2m_j}{m_i+m_j}z_\parallel)|dz_{\perp}dz_{\parallel}\cr
				&\leq 	\int_{\R^3}\frac{1}{|z_{\parallel}|}\int_{z_{\parallel}\cdot z_\perp =0}|z_\perp|^{\gamma-1}e^{-\frac{m_j}{4}|v+z_{\parallel}+z_\perp|^2}|f_i(v+\frac{2m_j}{m_i+m_j}z_\parallel)|dz_{\perp}dz_{\parallel}\cr
				&= C \int_{\R^3} \frac{1}{|\eta - v|} \int_{(\eta-v)\cdot z_{\perp}=0} |z_\perp|^{\gamma-1} e^{-\frac{m_j}{4}|\frac{m_i+m_j}{2m_j}\eta+\frac{m_j-m_i}{2m_j}v+z_\perp|^2}|f_i(\eta)| dz_\perp d\eta,
			\end{align}
			where the last equality comes from the change of variables $\eta:= v+ \frac{2m_j}{m_i+m_j}z_{\parallel}$. For the integral with respect to $dz_\perp$, by $0\leq \gamma\leq 1$, it holds that
			\begin{align}\label{zperp}
					&\int_{(\eta-v)\cdot z_\perp=0}|z_\perp|^{\gamma-1}e^{-\frac{m_j}{4}|\frac{m_i+m_j}{2m_j}\eta+\frac{m_j-m_i}{2m_j}v+z_\perp|^2}dz_\perp \cr
					&=	 	 
					\left(\int_{|z_\perp| \leq 1} + \int_{|z_\perp| \geq 1} \right)
					|z_\perp|^{\gamma-1} e^{-\frac{m_j}{4}|\frac{m_i+m_j}{2m_j}\eta+\frac{m_j-m_i}{2m_j}v+z_\perp|^2} dz_\perp\cr
					&\leq \int_{|z_\perp| \leq 1} |z_\perp|^{\gamma-1} dz_\perp +
					\int_{|z_\perp| \geq 1} e^{-\frac{m_j}{4}|\frac{m_i+m_j}{2m_j}\eta+\frac{m_j-m_i}{2m_j}v+z_\perp|^2} dz_\perp\cr
					&\leq C.
			\end{align}
			Combining \eqref{I-es} and\eqref{zperp} and using H\"older's inequality, we have 
			\begin{align*}
			&\int_{\R^3 \times \S^2}|z_\parallel|(|z_\parallel|^2+|z_\perp|^2)^{\frac{\gamma-1}{2}}e^{-\frac{m_j}{4}|v+z|^2}|f_i(v+\frac{2m_j}{m_i+m_j}z_\parallel)|dzd\omega \\ 
			&\leq C \int_{\R^3} \frac{|f_i(\eta)|}{|\eta-v|} d\eta \\ &\leq C\left(\int_{\R^3}\frac{1}{|\eta-v|^2\cdot(1+|\eta|)^4}d\eta\right)^{\frac{1}{2}}\left(\int_{\R^3}(1+|\eta|)^{4}|f_i(\eta)|^2 d\eta\right)^\frac{1}{2}\cr
				&\leq  \frac{C}{1+|v|}\left(\int_{\R^3}(1+|\eta|)^{4}|f_i(\eta)|^2d\eta\right)^\frac{1}{2}.
			\end{align*}
			In conclusion, we get a desired result for the $I_2$ term as follows 
			\begin{align} \label{I2}
				I_2\leq C_{q} \frac{\|wf_j(t)\|_{L^\infty}}{1+|v|}\left(\int_{\R^3} (1+|\eta|)^{-2q+4}|wf_i(\eta)|^2d\eta\right)^\frac{1}{2}.
			\end{align}
 	\medskip
 	\noindent\textbf{Estimate of $I_1$:} We apply Lemma \ref{carleman 3} to obtain
		\begin{align*}
			I_1 &=C_q\|wf_i(t)\|_{L^\infty}  \int_{\R^3 \times \S^2}B(v-v_*,\sigma)\frac{\sqrt{\mu_i(v')}\sqrt{\mu_j(v'_*)}}{\sqrt{\mu_i(v)}}|f_j(v'_*)|d\sigma dv_*\cr
			&\leq    C_q \Vert w f_i (t) \Vert_{L^\infty} \int_{\R^3} \frac{1}{|m_iv - m_j v'_*|}\frac{1}{|v-v'_*|^{1-\gamma}} (1+|v'_*|)^{-q} |wf_j(v'_*)| dv'_*.
		\end{align*}
		Using Hölder's inequality with conjugate exponents $p=\frac{5}{4}$ and $q=5$, we obtain		
		\begin{align*}
			&\int_{\R^3} \frac{1}{|m_iv - m_j v'_*|}\frac{1}{|v-v'_*|^{1-\gamma}} (1+|v'_*|)^{-q} |wf_j(v'_*)| dv'_*\cr
			&\leq\left(\int_{\R^3} \frac{1}{|m_iv -m_j v'_*|^\frac{5}{4}}\frac{1}{|v-v'_*|^{\frac{5}{4}(1-\gamma)}}\frac{1}{(1+|v'_*|)^\frac{5}{2}} dv'_*\right)^{4/5} \left( \int_{\R^3} (1+|v'_*|)^{-5q + 10 } |wf_j(v'_*)|^5 dv'_*\right)^{1/5}.
		\end{align*}
		To complete the estimate, we analyze the decay in $v$ of the first factor arising from Hölder's inequality.
		Applying Hölder's inequality once more, we obtain
\begin{align*}
&\int_{\mathbb{R}^3} \frac{1}{|m_i v - m_j v'_*|^{\frac{5}{4}}}
\frac{1}{|v-v'_*|^{\frac{5}{4}(1-\gamma)}}
\frac{1}{(1+|v'_*|)^{\frac{5}{2}}} \, dv'_* \\
&\le 
\left(\int_{\mathbb{R}^3}\frac{1}{|v-v'_*|^{\frac{5}{2}(1-\gamma)}(1+|v'_*|)^4}\, dv'_*\right)^{\frac{1}{2}}
\left(\int_{\mathbb{R}^3}\frac{1}{|m_i v - m_j v'_*|^{\frac{5}{2}}}\frac{1}{(1+|v'_*|)}\, dv'_*\right)^{\frac{1}{2}}.
\end{align*}
The first term is estimated by standard convolution bounds, while the second term is controlled by Lemma \ref{I1,I2}.
Consequently, we obtain
\begin{align*}
\int_{\mathbb{R}^3} \frac{1}{|m_i v - m_j v'_*|^{\frac{5}{4}}}
\frac{1}{|v-v'_*|^{\frac{5}{4}(1-\gamma)}}
\frac{1}{(1+|v'_*|)^{\frac{5}{2}}} \, dv'_*
\le \frac{C_{\gamma}}{(1+|v|)^{\frac{3}{2}-\frac{5}{4}\gamma}},
\end{align*}
where the constant $C_{\gamma}>0$ depends on $\gamma$. 
		Combining the above estimates, we obtain
		\begin{align} \label{I1} 
			I_1&\leq  C_q \Vert w f_i (t) \Vert_{L^\infty} \int_{\R^3} \frac{1}{|m_iv - m_j v'_*|}\frac{1}{|v-v'_*|^{1-\gamma}} (1+|v'_*|)^{-q} |w(v'_*)f_j(v'_*)| dv'_*\cr
			&\leq C_{q,\gamma}\frac{\Vert wf_i(t)\Vert_{L^\infty}}{(1+|v|)^{\frac{6}{5}-\gamma}}\left( \int_{\R^3} (1+|v'_*|)^{-5q + 10 } |wf_j(v'_*)|^5 dv'_*\right)^{1/5}.
		\end{align}
		Hence, from \eqref{gain_split},\eqref{I2}, and \eqref{I1}, there exists a positive constant $C_{*,1}= C_{*,1}(q, \gamma)>0$ depending on $q$ and $\gamma$ such that 
		\begin{align*}
			|w(v)\Gamma^+(f_i,f_j)(v)| &\leq C_{*,1}\frac{\|wf_j(t)\|_{L^\infty}}{1+|v|}\left(\int_{\R^3} (1+|\eta|)^{-2q+4}|wf_i(\eta)|^2d\eta\right)^\frac{1}{2}\cr
			&\quad +C_{*,1}\frac{\Vert wf_i(t)\Vert_{L^\infty}}{(1+|v|)^{\frac{6}{5}-\gamma}}\left( \int_{\R^3} (1+|\eta|)^{-5q + 10 } |wf_j(\eta)|^5 d\eta\right)^\frac{1}{5}
		\end{align*}
		and
		\begin{align*}
			\sum_{j=1}^{N}|w(v)\Gamma^+(f_i,f_j)(v)| &\leq C_{*,1}\sum_{j=1}^{N}\frac{\|wf_j(t)\|_{L^\infty}}{1+|v|}\left(\int_{\R^3} (1+|\eta|)^{-2q+4}|wf_i(\eta)|^2d\eta\right)^\frac{1}{2}\cr
		&\quad +C_{*,1}\frac{\Vert wf_i(t)\Vert_{L^\infty}}{(1+|v|)^{\frac{6}{5}-\gamma}}\sum_{j=1}^{N}\left( \int_{\R^3} (1+|\eta|)^{-5q + 10 } |wf_j(\eta)|^5 d\eta\right)^\frac{1}{5}.
		\end{align*}
			\end{proof}
		As a direct application of Lemma~\ref{gain est_new}, we obtain $L^\infty$-type bounds for the full nonlinear collision operator $\Gamma_i(\pmb{f})$.
		\begin{corollary} \label{nonlinear est} 
			For $\gamma \in [0,1]$, there exists a constant $C>0$ such that 
			\begin{align*}
				&|w(v)\Gamma^+_{i}(\pmb{f})(v)|\leq \sum_{j=1}^N |w(v) \Gamma^+(f_i,f_j)(v)| \leq C \Vert w \pmb{f}(t)\Vert_{L^{\infty}}^2,\\
				&|w(v) \Gamma^-_{i} (\pmb{f})(v)| \leq \sum_{j=1} ^N |w(v) \Gamma^-(f_i,f_j)(v)| \leq C \langle v \rangle^{\gamma} \Vert wf_i(t) \Vert_{L^\infty}\Vert w\pmb{f}(t) \Vert_{L^\infty},
			\end{align*}
			and consequently, 
			\begin{align*}
				|w(v) \Gamma_{i} (\pmb{f})(v)| \leq C\langle v \rangle^{\gamma} \Vert w \pmb{f}(t) \Vert_{L^\infty}^2. 
			\end{align*}
		\end{corollary}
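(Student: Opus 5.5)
The gain bound follows from Lemma \ref{gain est_new} by crude majorization. The loss bound is a direct computation using the Gaussian factor $\sqrt{\mu_j(v_*)}$. The bound on $\Gamma_i$ then follows from the triangle inequality $|\Gamma_i|\le|\Gamma_i^+|+|\Gamma_i^-|$.

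\emph{Gain part.} The first inequality, $|w\Gamma_i^+(\pmb f)|\le\sum_j|w\Gamma^+(f_i,f_j)|$, is the triangle inequality. In the right-hand side of Lemma \ref{gain est_new} we bound $|wf_i(\eta)|\le\|wf_i(t)\|_{L^\infty}$ inside the integrals. The remaining integrals are
\[
\int_{\R^3}(1+|\eta|)^{-2q+4}\,d\eta \quad\text{and}\quad \int_{\R^3}(1+|\eta|)^{-5q+10}\,d\eta .
\]
These are finite precisely when $2q-4>3$ and $5q-10>3$, and both hold since $q>4$ by \eqref{v_weight}. The prefactors satisfy $(1+|v|)^{-1}\le1$ and $(1+|v|)^{-(6/5-\gamma)}\le(1+|v|)^{-1/5}\cdot(1+|v|)^{-(1-\gamma)}\le 1$ for $\gamma\in[0,1]$. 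Each term is therefore bounded by $C\|wf_j\|_{L^\infty}\|wf_i\|_{L^\infty}\le C\|w\pmb f(t)\|_{L^\infty}^2$, where we use $\|wf_k\|_{L^\infty}\le\|w\pmb f\|_{L^\infty}$ under the sum-norm convention.

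\emph{Loss part.} We have
\[
|w(v)\Gamma^-(f_i,f_j)(v)|\le |wf_i(v)|\int_{\R^3\times\S^2}B_{ij}\sqrt{\mu_j(v_*)}\,|f_j(v_*)|\,d\sigma dv_*.
\]
Using $b_{ij}\le C^b$ on $\S^2$, $|v-v_*|^\gamma\le C\langle v\rangle^\gamma\langle v_*\rangle^\gamma$ and $|f_j(v_*)|\le\|wf_j\|_{L^\infty}w(v_*)^{-1}$, the integral is at most
\[
C\langle v\rangle^\gamma\|wf_j\|_{L^\infty}\int_{\R^3}\langle v_*\rangle^{\gamma}\sqrt{\mu_j(v_*)}\,dv_*\le C\langle v\rangle^\gamma\|wf_j\|_{L^\infty}.
\]
Summing over $j$ gives $C\langle v\rangle^\gamma\|wf_i\|_{L^\infty}\|w\pmb f\|_{L^\infty}$. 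Combining the gain and loss bounds, and using $\langle v\rangle^\gamma\ge1$ to absorb the gain bound, yields the estimate for $|w\Gamma_i(\pmb f)|$.

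No step here is a real obstacle. The only point needing care is confirming that the weighted $\eta$-integrals in Lemma \ref{gain est_new} converge, and that is exactly where the assumption $q>4$ enters.
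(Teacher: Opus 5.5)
Your proof is correct and follows the same route as the paper. The gain bound is read off from Lemma \ref{gain est_new} by replacing $|wf(\eta)|$ with its $L^\infty$ norm; the resulting $\eta$-integrals converge since $q>4$. The loss bound is the same direct computation using $B_{ij}\lesssim |v-v_*|^\gamma$, the Gaussian factor $\sqrt{\mu_j(v_*)}$, and $\langle v\rangle^\gamma\ge 1$; you have simply written out the details the paper leaves implicit.
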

		\begin{proof}
			For the part $w(v) \Gamma^+_{i}(\pmb{f})(v)$, we can directly deduce the estimate from Lemma \ref{gain est_new}. By definition of $\Gamma^-_{i}$ and \eqref{kernel}, we have 
			\begin{align*}
				|w(v) \Gamma^-_{i}(\pmb{f})(v)|  \leq \sum_{j=1} ^N |w(v) \Gamma^-(f_i,f_j)(v)| &\leq \Vert wf_i(t) \Vert_{L^\infty} \sum_{j=1}^N \int_{\R^3} \int_{\S^2} B_{ij}(v-v_*,\sigma) \sqrt{\mu_j(v_*)}|f_j(v_*)| d\sigma dv_*\\
				&\leq C \Vert w f_i(t) \Vert_{L^\infty} \sum_{j=1}^N \int_{\R^3} |v-v_*|^{\gamma} \sqrt{\mu_j(v_*)}w^{-1}(v_*)|wf_j(v_*)|  dv_*\\
				&\leq C \langle v \rangle^{\gamma} \Vert wf_i(t) \Vert_{L^\infty} \Vert w \pmb{f}(t) \Vert_{L^\infty}.
			\end{align*}
			Combining the above estimates yields the desired result.
		\end{proof}
		
		\section{Small-amplitude perturbation problem}\label{sec:small_regime}
		In this section, we establish the global well-posedness of the multi-species Boltzmann equation in the small-amplitude regime, thereby proving Theorem~\ref{thm:main2}. These results will also provide the final ingredient in the large-amplitude analysis of Section~\ref{sec:large_regime}, where the solution is shown to enter the small-amplitude regime after a finite time.
		\subsection {Linear $L^2$ decay theory}
		In this subsection, we consider the following linear system for $\pmb{f}$: 
		\begin{align} \label{linear BE}
			\p_t \pmb{f} + v \cdot \nabla_x \pmb{f} = \pmb{L}(\pmb{f}) + \pmb{q}.
		\end{align}
		Here, $\pmb{f} = (f_1, \dots, f_N)$, and the system is understood componentwise for each species.
		Recall the definition of linear multi-species Boltzmann operator 
		\begin{align*}
			\pmb{L}(\pmb{f}) = (L_i(\pmb{f}))_{1\leq i \leq N} =\left( \sum_{j=1}^N L_{ij}(\pmb{f})\right)_{1\leq i \leq N},
		\end{align*}
		where 
		\begin{align*}
			L_{ij}(\pmb{f}) = \frac{1}{\sqrt{\mu_i}} \left[Q_{ij}(\mu_i, \sqrt{\mu_j}f_j)+ Q_{ij}(\sqrt{\mu_i}f_i,\mu_j)\right].
		\end{align*}
		The operator $\pmb{L}$ is self-adjoint in $L^2_v$. Moreover, $\langle \pmb{f}, \pmb{L}(\pmb{f})\rangle_{L^2_v}=0$ if and only if $\pmb{f}\in \mathrm{Ker}(\pmb{L})$, where
		\begin{align*}
			\mathrm{Ker}(\pmb{L})=\mathrm{Span}\{\pmb{\phi}_1,\pmb{\phi}_2,\dots,\pmb{\phi}_{N+4}\}.
		\end{align*}
		Here, the orthonormal basis $\{\pmb{\phi}_i\}_{1\leq i \leq N+4}$ is given by
		\begin{align} \label{phi} 
			\begin{cases}
				\pmb{\phi}_i = \pmb{\phi}_i(v)= \frac{1}{\sqrt{n_{\infty,i}}} \sqrt{\mu_i(v)} \pmb{e}_i, \quad 1\leq i \leq N,\\
				\pmb{\phi}_{N+i} = \pmb{\phi}_{i+N}(v) =\frac{v_i}{\left(\sum_{k=1}^N m_k n_{\infty,k}\right)^{1/2}} \sum_{j=1}^N \left(m_j \sqrt{\mu_j(v)}\right) \pmb{e}_j, \quad 1 \leq i \leq 3,\\
				\pmb{\phi}_{N+4}  =  \pmb{\phi}_{N+4} (v) = \frac{1}{\left(\sum_{k=1}^N  n_{\infty,k}\right)^{1/2}}  \sum_{j=1}^N \left(\frac{|v|^2 -3m_j^{-1}}{\sqrt{6}}m_j \sqrt{\mu_j(v)}\right)\pmb{e}_j,
			\end{cases}
		\end{align}
		where $\pmb{e}_j$ is the $j$-th unit vector in $\R^N$. To distinguish between the pointwise and $L^2_v$ inner products, we define
		\[
		\pmb{f}(v)\cdot \pmb{g}(v)=\sum_{j=1}^N f_j(v)\,g_j(v),
		\]
		and the $L^2_v$ inner product
		\[
		\langle \pmb{f},\pmb{g}\rangle_{L^2_v}=\int_{\mathbb{R}^3}\pmb{f}(v)\cdot \pmb{g}(v)\,dv
		=\sum_{j=1}^N\int_{\mathbb{R}^3} f_j(v)\,g_j(v)\,dv.
		\]
		We define the projection operator $\pmb{P}_{\pmb{L}}$ onto $\mathrm{Ker}(\pmb{L})$ in $L^2_v$ by 
		\begin{align*}
			\pmb{P}_{\pmb{L}}(\pmb{f}) := \sum_{i=1}^{N+4} \langle \pmb{f}, \pmb{\phi}_i \rangle_{L^2_v}\pmb{\phi}_i.
		\end{align*}
		We state a reformulated version of Theorem 3.3 in \cite{BD2016}, expressed in terms of the orthonormal basis $\{\pmb{\phi}_i\}_{1\leq i \leq N+4}$.
		\begin{proposition} \cite{BD2016} \label{L coer1}
			There exists a positive constant $\lambda_L>0$ such that 
			\begin{align*}
				\langle \pmb{f} , \pmb{L}(\pmb{f}) \rangle_{L^2_v}\leq - \lambda_L \Vert (\pmb{I}-\pmb{P}_{\pmb{L}})(\pmb{f})\Vert_{L^2_v(\langle v \rangle^{\gamma/2})}^2.
			\end{align*}
		\end{proposition}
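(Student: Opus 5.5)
This coercivity estimate is Theorem 3.3 of \cite{BD2016}, restated in the orthonormal basis \eqref{phi}, so the plan is to reduce it to that result rather than reprove it from scratch. There are three steps: check that \eqref{phi} is an orthonormal basis of $\mathrm{Ker}(\pmb{L})$; check that the resulting projection agrees with the one used in \cite{BD2016}; and import the spectral-gap inequality with the weight $\langle v\rangle^{\gamma/2}$.

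\textbf{Kernel and orthonormality.} From the weak formulation and the H-theorem structure, $\langle \pmb{f},\pmb{L}(\pmb{f})\rangle_{L^2_v}$ is, up to sign, a quadratic Dirichlet-type form. Writing $\pmb{f}=\pmb{\mu}^{-1/2}\pmb{g}$ componentwise, it takes the form
\[
-\frac14\sum_{i,j}\int B_{ij}\,\mu_i\mu_{j*}\,\bigl(g_i'+g_{j*}'-g_i-g_{j*}\bigr)^2 .
\]
This form is nonpositive, and it vanishes exactly when $(g_i)_i$ is a collision invariant. By \eqref{conserv}-type reasoning, the collision invariants are spanned by $\pmb{e}_k$, $m_k v$ and $m_k|v|^2$. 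Multiplying these by $\sqrt{\mu_k}$ gives the $N+4$ functions of \eqref{phi}.

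Orthonormality follows from Gaussian moment computations with $\mu_i$ as in \eqref{global maxwellian}:
\begin{itemize}
\item The normalizations use $\int\mu_i=n_{\infty,i}$ and $\int m_j^2v_i^2\mu_j=m_jn_{\infty,j}$.
\item For the energy function, $|v|^2-3m_j^{-1}$ is orthogonal to $\sqrt{\mu_j}$ in each species. Its squared norm is $\int m_j^2(|v|^2-3/m_j)^2\mu_j/6=n_{\infty,j}$.
\item Parity in $v$ gives orthogonality between the momentum functions and all the others.
\end{itemize}

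\textbf{Identification of projections.} \cite{BD2016} states the gap with $\pi_L$, the orthogonal projection onto $\mathrm{Ker}(\pmb{L})$ in $L^2_v$. Since \eqref{phi} spans the same space orthonormally, $\pmb{P}_{\pmb{L}}=\pi_L$, and hence $(\pmb{I}-\pmb{P}_{\pmb{L}})\pmb{f}$ coincides with their $\pi_L^\perp\pmb{f}$.

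\textbf{Import of the gap.} Their Theorem 3.3 reads
\[
\langle\pmb{f},\pmb{L}\pmb{f}\rangle\le-\lambda\,\|\pi_L^\perp\pmb{f}\|^2_{L^2_v(\langle v\rangle^{\gamma/2})}.
\]
It is proved by comparing $\pmb{L}$ with the diagonal operator $\sum_i L_{ii}$, which is a sum of single-species linearized operators each having a known gap, with the cross terms controlled through an explicit constructive argument.

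\textbf{Main obstacle.} The main work is making sure the weighted norm matches. The $\nu$-weighted norm $\|\cdot\|_{L^2_v(\nu^{1/2})}$ in \cite{BD2016} and $\langle v\rangle^{\gamma/2}$ are equivalent by Lemma~\ref{CFE}, so the inequality transfers with constant $\lambda_L=\lambda\,\min_i\nu_i^{(0)}$, up to harmless factors. If one instead had only the unweighted $L^2$ gap, the upgrade would go as follows. Decompose $\pmb{L}=-\nu+\pmb{K}$ and use compactness of $\pmb{K}$ (Lemma~\ref{K est}) to bound $\langle\pmb{K}\pmb{f},\pmb{f}\rangle$ by $\tfrac12\|\nu^{1/2}\pmb{f}\|^2+C\|\pmb{f}\|^2$ on $\mathrm{Ker}^\perp$. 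Then interpolate a convex combination of the two inequalities.
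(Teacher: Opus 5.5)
There is a gap in your reduction step. Reducing to Theorem~3.3 of \cite{BD2016} is the right plan, but you misdescribe that theorem, and the transfer between the two settings is the entire content of the paper's proof. In \cite{BD2016} the setting differs from yours in four ways:
\begin{itemize}
\item the perturbation is $\pmb{F}=\pmb{\mu}+\pmb{g}$, not $\pmb{F}=\pmb{\mu}+\sqrt{\pmb{\mu}}\pmb{f}$;
\item the operator is $\pmb{L'}$, with $L'_{ij}(\pmb{g})=Q_{ij}(\mu_i,g_j)+Q_{ij}(g_i,\mu_j)$, acting on $L^2_v(\pmb{\mu}^{-1/2})$;
\item its kernel is spanned by $\pmb{\phi'}_i=\sqrt{\pmb{\mu}}\,\pmb{\phi}_i$, built from $\mu_j$ rather than $\sqrt{\mu_j}$;
\item $\pi_L$ is the orthogonal projection in $L^2_v(\pmb{\mu}^{-1/2})$, and the gap is measured in $L^2_v(\langle v\rangle^{\gamma/2}\pmb{\mu}^{-1/2})$.
\end{itemize}
So your claim that \eqref{phi} ``spans the same space'' and hence $\pmb{P}_{\pmb{L}}=\pi_L$ is false as stated. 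These are projections for different operators on different Hilbert spaces. The inequality you quote as ``their Theorem 3.3'' is not what is proved there.

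The missing step is conjugation by the isometry $\pmb{f}\mapsto\sqrt{\pmb{\mu}}\,\pmb{f}$. It maps $L^2_v$ onto $L^2_v(\pmb{\mu}^{-1/2})$, and $L^2_v(\langle v\rangle^{\gamma/2})$ onto $L^2_v(\langle v\rangle^{\gamma/2}\pmb{\mu}^{-1/2})$. With $\pmb{g}=\sqrt{\pmb{\mu}}\pmb{f}$, the paper's argument runs as follows:
\begin{itemize}
\item One checks $L'_i(\pmb{g})=\sqrt{\mu_i}\,L_i(\pmb{f})$, so $\langle \pmb{g},\pmb{L'}(\pmb{g})\rangle_{L^2_v(\pmb{\mu}^{-1/2})}=\langle\pmb{f},\pmb{L}(\pmb{f})\rangle_{L^2_v}$.
\item One checks $\langle\sqrt{\pmb{\mu}}\pmb{f},\pmb{\phi'}_i\rangle_{L^2_v(\pmb{\mu}^{-1/2})}=\langle\pmb{f},\pmb{\phi}_i\rangle_{L^2_v}$.
\item Hence $(\pmb{I}-\pmb{P}_{\pmb{L'}})(\pmb{g})=\sqrt{\pmb{\mu}}\,(\pmb{I}-\pmb{P}_{\pmb{L}})(\pmb{f})$, and the two weighted norms coincide.
\end{itemize}
Your orthonormality computations are correct, and they are what justifies expanding $\pmb{P}_{\pmb{L'}}$ in the basis $\{\pmb{\phi'}_i\}$. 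This works because orthonormality of $\{\pmb{\phi'}_i\}$ in $L^2_v(\pmb{\mu}^{-1/2})$ is equivalent to orthonormality of $\{\pmb{\phi}_i\}$ in $L^2_v$.

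The weight matching you single out as the main obstacle is not one. The cited estimate already carries $\langle v\rangle^{\gamma/2}$, and in any case $\nu_i\simeq\langle v\rangle^{\gamma}$ by Lemma~\ref{CFE}. One small slip: in your Dirichlet form the substitution should read $\pmb{f}=\sqrt{\pmb{\mu}}\,\pmb{g}$, i.e.\ $g_i=f_i/\sqrt{\mu_i}$. That is what your later step of multiplying the invariants by $\sqrt{\mu_k}$ actually uses.
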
 
		\begin{proof}
			In \cite{BD2016}, they used the following perturbation 
			\begin{align*}
				\pmb{F} = \pmb{\mu} + \pmb{g},
			\end{align*} 
			instead of $\pmb{F} = \pmb{\mu} + \sqrt{\pmb{\mu}} \pmb{f}$. In \cite{BD2016}, In \cite{BD2016}, the estimate is formulated in the weighted space $L^2_v(\pmb{\mu}^{-1/2})$, equipped with the norm
			\begin{align*}
				\Vert  \pmb{f} \Vert_{L^2_v(\pmb{\mu}^{-1/2})}^2:= \sum_{i=1}^N \Vert f_i \mu_i^{-1/2}\Vert_{L^2_v}^2. 
			\end{align*}
			The coercivity estimate in \cite{BD2016} reads
			\begin{align} \label{Briant_L}
				\langle \pmb{g} , \pmb{L'}(\pmb{g}) \rangle_{L^2_v(\pmb{\mu}^{-1/2})}\leq - \lambda_L \Vert (\pmb{I}-\pmb{P}_{\pmb{L'}})(\pmb{g})\Vert_{L^2_v(\langle v \rangle^{\gamma/2}\pmb{\mu}^{-1/2})}^2.
			\end{align}
			The operator $\pmb{L'}$ is given by
			\begin{align*}
				\pmb{L'}(\pmb{g}) = (L'_i(\pmb{g}))_{1\leq i \leq N} =\left(\sum_{j=1}^N L'_{ij}(\pmb{g})\right)_{1\leq i \leq N},
			\end{align*}
			where
			\begin{align*}
				L'_{ij}(\pmb{g}) = Q_{ij}(\mu_i , g_j) + Q_{ij}(g_i ,\mu_j).
			\end{align*}
			Moreover, the kernel of $\pmb{L'}$ is spanned by
			\begin{align*}
				\mathrm{Ker}(\pmb{L'})  = \mathrm{Span}\{\pmb{\phi'}_1 , \pmb{\phi'}_2, \dots, \pmb{\phi'}_{N+4}\} ,
			\end{align*}
			where 
			\begin{align*}
				\begin{cases}
					\pmb{\phi'}_i = \pmb{\phi'}_i(v)= \frac{1}{\sqrt{n_{\infty,i}}} {\mu_i(v)} \pmb{e}_i, \quad 1\leq i \leq N,\\
					\pmb{\phi'}_{i+N} = \pmb{\phi'}_{i+N}(v) =\frac{v_i}{\left(\sum_{k=1}^N m_k n_{\infty,k}\right)^{1/2}} \sum_{j=1}^N \left(m_j {\mu_j(v)}\right) \pmb{e}_j, \quad 1 \leq i \leq 3,\\
					\pmb{\phi'}_{N+4}  =  \pmb{\phi'}_{N+4} (v) = \frac{1}{\left(\sum_{k=1}^N  n_{\infty,k}\right)^{1/2}}  \sum_{j=1}^N \left(\frac{|v|^2 -3m_j^{-1}}{\sqrt{6}}m_j {\mu_j(v)}\right)\pmb{e}_j.
				\end{cases}
			\end{align*}
			To rewrite \eqref{Briant_L} in terms of $\pmb{f}$, we set $\pmb{g} = \sqrt{\pmb{\mu}} \pmb{f}=(\sqrt{\mu_i}f_i)_{1\leq i \leq N}$. Then the left-hand side in \eqref{Briant_L} becomes 
			\begin{align*}
				\langle \pmb{g} , \pmb{L'}(\pmb{g}) \rangle_{L^2_v(\pmb{\mu}^{-1/2})} = \sum_{i=1}^N \int_{\R^3} \frac{ g_i {L'_i}(\pmb{g})}{ {\mu_i}}dv = \sum_{i=1}^N \int_{\R^3} f_i L_i (\pmb{f}) dv = \langle \pmb{f}, \pmb{L}(\pmb{f}) \rangle_{L^2_v},
			\end{align*}
			where we have used the fact $L_i' (\pmb{g}) = \sqrt{\mu_i} L_i(\pmb{f})$. For the right-hand side in \eqref{Briant_L}, we have 
			\begin{align*}
				(\pmb{I}-\pmb{P}_{\pmb{L'}})(\pmb{g}) &= \sqrt{\pmb{{\mu}}}\pmb{f} -\pmb{P}_{\pmb{L'}}(\pmb{g})\\
				&= \sqrt{\pmb{{\mu}}}\pmb{f} - \sum_{i=1}^{N+4}  \langle \sqrt{\pmb{{\mu}}}\pmb{f} , \pmb{\phi'_i} \rangle_{L^2_v(\pmb{\mu}^{-1/2})} \pmb{\phi_i '} \\
				&= \sqrt{\pmb{{\mu}}}\pmb{f} - \sqrt{\pmb{{\mu}}}\sum_{i=1}^{N+4} \langle \pmb{f} , \pmb{\phi_i} \rangle_{L^2_v} \pmb{\phi_i }\\
				&= \sqrt{\pmb{{\mu}}} (\pmb{I} - \pmb{P}_{\pmb{L}})(\pmb{f}).
			\end{align*}
			This implies that 
			\begin{align*}
				\Vert (\pmb{I}-\pmb{P}_{\pmb{L'}})(\pmb{g})\Vert_{L^2_v(\langle v \rangle^{\gamma/2}\pmb{\mu}^{-1/2})}^2 = \Vert  (\pmb{I} - \pmb{P}_{\pmb{L}})(\pmb{f}) \Vert_{L^2_v(\langle v \rangle^{\gamma/2})} ^2.
			\end{align*}
		\end{proof}
		To derive the $L^2$ decay of \eqref{linear BE}, it is essential to control the macroscopic component $\pmb{P}_{\pmb{L}}(\pmb{f})$ in terms of the microscopic component $(\pmb{I}-\pmb{P}_{\pmb{L}})(\pmb{f})$. This estimate is proved in \cite{BD2016} and can be stated as follows.
		\begin{lemma} \cite{BD2016} \label{L coer2}
			Let $\pmb{f}_0(x,v)$ and $\pmb{q}(t,x,v)$ belong to $L^2_{x,v}$ and satisfy
			\begin{align*}
				\sum_{i=1}^{N+4} \left(\int_{\T^3 } \langle \pmb{f}_0, \pmb{\phi}_i \rangle_{L^2_v} dx\right) \pmb{\phi}_i =\sum_{i=1}^{N+4} \left(\int_{\T^3 } \langle \pmb{q}, \pmb{\phi}_i \rangle_{L^2_v}dx\right) \pmb{\phi}_i =0,
			\end{align*}
			where $\pmb{\phi}_i$ is defined by \eqref{phi}. 
			Suppose that $\pmb{f}=\pmb{f}(t,x,v) \in L^2_{x,v}$ is a solution to \eqref{linear BE} with initial data $\pmb{f}_0$. Then, there exist a positive constant $C_1>0$ and a function $N_{\pmb{f}}(t)$ such that $|N_{\pmb{f}}(t) | \leq C_1 \Vert \pmb{f}(t) \Vert_{L^2 }^2$ and 
			\begin{align*}
				\int_0^t \Vert \pmb{P}_{\pmb{L}} (\pmb{f})(s)\Vert_{L^2 (\langle v \rangle^{\gamma/2})}^2 ds &\leq N_{\pmb{f}} (t) - N_{\pmb{f}}(0) \cr
				&\quad +C_1 \int_0^t \Vert (\pmb{I}-\pmb{P}_{\pmb{L}})(\pmb{f})(s) \Vert_{L^2 (\langle v \rangle^{\gamma/2})}^2 ds +C_1 \int_0^t \Vert \pmb{q}(s) \Vert_{L^2 }^2ds, 
			\end{align*}
		\end{lemma}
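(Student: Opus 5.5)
The statement is Lemma \ref{L coer2}, the macroscopic--microscopic estimate. I would prove it by the standard test-function (weak formulation) method of Esposito--Guo--Kim--Marra, adapted to the $N+4$ dimensional kernel \eqref{phi}.

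\textbf{Step 1: decomposition.} Write
\[
\pmb{P}_{\pmb{L}}(\pmb{f}) = \sum_{i=1}^N a_i(t,x)\pmb{\phi}_i + \sum_{k=1}^3 b_k(t,x)\pmb{\phi}_{N+k} + c(t,x)\pmb{\phi}_{N+4}.
\]
The zero-mean hypotheses on $\pmb{f}_0$ and $\pmb{q}$ are propagated by the equation, since $\pmb{L}$ annihilates against the collision invariants. Hence each $a_i$, $b_k$ and $c$ has zero average on $\T^3$ for all $t$. Because the kernel functions decay like Gaussians, $\Vert \pmb{P}_{\pmb{L}}\pmb{f}\Vert_{L^2(\langle v\rangle^{\gamma/2})}$ is equivalent to $\sum_i\Vert a_i\Vert_{L^2_x}+\Vert b\Vert_{L^2_x}+\Vert c\Vert_{L^2_x}$. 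By Poincaré's inequality it therefore suffices to control these coefficients through elliptic problems.

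\textbf{Step 2: test functions.} Multiply \eqref{linear BE} by a test function $\pmb{\psi}(t,x,v)$ and integrate over $[0,t]\times\T^3\times\R^3$. This gives
\[
\langle \pmb{f}(t),\pmb{\psi}(t)\rangle-\langle \pmb{f}(0),\pmb{\psi}(0)\rangle-\int_0^t\langle \pmb{f},\p_t\pmb{\psi}+v\cdot\nabla_x\pmb{\psi}\rangle
= \int_0^t\langle (\pmb{I}-\pmb{P}_{\pmb{L}})\pmb{f},\pmb{L}\pmb{\psi}\rangle+\langle \pmb{q},\pmb{\psi}\rangle,
\]
using the self-adjointness of $\pmb{L}$. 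The boundary terms in time define $N_{\pmb{f}}(t)$. Each test function is linear in the solution of an elliptic problem $-\Delta_x\phi = a_i$, $b_k$ or $c$, so by elliptic regularity $|N_{\pmb{f}}(t)|\le C_1\Vert \pmb{f}(t)\Vert_{L^2}^2$.

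The choices are the following.
\begin{itemize}
\item For $c$: take $\psi_j = (|v|^2-\beta_c m_j^{-1}) m_j\sqrt{\mu_j}\, v\cdot\nabla_x\phi_c$, with the same weighted form in every component $j$. The constant $\beta_c$ is chosen so that the $a_i$ and $b$ contributions in $v\cdot\nabla_x\pmb{\psi}$ vanish, leaving $\Vert c\Vert_{L^2}^2$ plus microscopic terms.
\item For $b$: take the usual combinations $\psi_j=(v_kv_l-\beta_b\delta_{kl}m_j^{-1})m_j\sqrt{\mu_j}\,\p_l\phi_b^k$ and $|v|^2v_kv_l\sqrt{\mu_j}$-type corrections. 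These eliminate $a_i$ and $c$ and produce $\Vert\nabla_x b\Vert$ through the symmetric-gradient identity.
\item For $a_i$: take a test function supported in the $i$-th component only, $\psi_i = (|v|^2-\beta_a m_i^{-1})\sqrt{\mu_i}\,v\cdot\nabla_x\phi_{a_i}$, with $\beta_a$ chosen so that the $c$ contribution vanishes. The leftover $b$ cross terms are then absorbed using the $b$ estimate.
\end{itemize}

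\textbf{Step 3: time derivatives and absorption.} The $\p_t\phi$ terms are handled by testing the local conservation laws, obtained by projecting the equation onto each $\pmb{\phi}_i$. This gives $\Vert\p_t\nabla_x^{-1}\phi\Vert\lesssim \Vert(\pmb{I}-\pmb{P}_{\pmb{L}})\pmb{f}\Vert+\Vert \pmb{q}\Vert$ plus small multiples of the macroscopic part. All right-hand sides are then bounded by Cauchy--Schwarz, with $\pmb{L}\pmb{\psi}\in L^2$ guaranteed by Lemma \ref{K est} and Lemma \ref{CFE}. Finally, small multiples of $\Vert \pmb{P}_{\pmb{L}}\pmb{f}\Vert^2$ are absorbed into the left-hand side.

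\textbf{Main obstacle.} I expect the hardest step to be the algebra of choosing the constants $\beta$ in the presence of distinct masses. The moments $\int |v|^{2k}m_j\mu_j$ depend on $j$, so a single uniform choice as in the single-species case does not obviously decouple $a_i$ from $c$. I would first try putting species-dependent factors $m_j^{-1}$ inside the test functions, scaled so that the Gaussian moments become mass-independent after substituting $v\mapsto v/\sqrt{m_j}$. If that fails, I would instead solve a small linear system for species-dependent $\beta_j$ that kills the unwanted cross moments.
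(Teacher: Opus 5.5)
Your outline is sound, but it is a genuinely different route from the paper's. The paper does not reprove the estimate. It imports Lemma 4.2 of \cite{BD2016}, which is stated for $\pmb{F}=\pmb{\mu}+\pmb{g}$ in $L^2_v(\pmb{\mu}^{-1/2})$, and transfers it by the same change of unknown $\pmb{g}=\sqrt{\pmb{\mu}}\pmb{f}$ used in the proof of Proposition~\ref{L coer1}. That map has the following properties:
\begin{itemize}
\item it is an isometry from $L^2_v(\langle v\rangle^{\gamma/2})$ onto $L^2_v(\langle v\rangle^{\gamma/2}\pmb{\mu}^{-1/2})$;
\item it satisfies $\pmb{L'}(\sqrt{\pmb{\mu}}\pmb{f})=\sqrt{\pmb{\mu}}\pmb{L}(\pmb{f})$ and $\pmb{P}_{\pmb{L'}}(\sqrt{\pmb{\mu}}\pmb{f})=\sqrt{\pmb{\mu}}\pmb{P}_{\pmb{L}}(\pmb{f})$;
\item it turns $\pmb{q}$ into $\sqrt{\pmb{\mu}}\pmb{q}$ with the same norm and the same vanishing global projections.
\end{itemize}
So the hypotheses, the conclusion and $N_{\pmb{f}}$ carry over verbatim. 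You instead reprove the cited lemma from scratch with the Esposito--Guo--Kim--Marra test-function method adapted to mixtures. The paper's route costs two lines but is only as self-contained as \cite{BD2016}. Yours is self-contained and shows exactly where the masses enter.

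If you carry your route out, your first idea for the mass problem works. After $v\mapsto v/\sqrt{m_j}$, every moment of $\mu_j$ of degree $2k$ is $n_{\infty,j}m_j^{-k}$ times a universal number, so uniform constants suffice:
\begin{itemize}
\item $\beta_c=5$ removes every $a_j$ from $v\cdot\nabla_x\pmb{\psi}_c$ and also the $b$-part of $\p_t\pmb{\psi}_c$;
\item $\beta_a=10$ removes $c$ from the transport term of the $a_i$-test;
\item $\beta_b=1$ removes every $a_j$ from $\p_t\pmb{\psi}_b$.
\end{itemize}
The single test function $\psi_j=\sum_{k,l}(v_kv_l-\delta_{kl}m_j^{-1})m_j\sqrt{\mu_j}\,\p_l\phi_b^k$ already gives $-\bigl(\sum_j n_{\infty,j}\bigr)\bigl(\sum_k m_kn_{\infty,k}\bigr)^{-1/2}\bigl(\|b\|_{L^2}^2+\|\nabla_x(\nabla_x\cdot\phi_b)\|_{L^2}^2\bigr)$ in the transport term. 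So no $|v|^2v_kv_l$ corrections are needed, and what you obtain is $\|b\|$, not $\|\nabla_x b\|$.

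Two claims in your sketch are wrong as stated, though not fatal.
\begin{itemize}
\item $c$ cannot be removed from $\p_t\pmb{\psi}_b$. Indeed $\int(v_k^2-\beta m_j^{-1})(|v|^2-3m_j^{-1})\mu_j\,dv=2n_{\infty,j}m_j^{-2}$ for every $\beta$; parity removes $a$ and $c$ only from the transport term.
\item The local conservation laws express $\p_t a_i$ and $\p_t c$ through $\nabla_x\cdot b$, and $\p_t b$ through $\nabla_x(a_1,\dots,a_N,c)$, with $O(1)$ coefficients rather than small ones.
\end{itemize}
The absorption must therefore follow the usual hierarchy:
\begin{itemize}
\item $\int_0^t\|c\|^2\le\varepsilon\int_0^t\|b\|^2+\cdots$;
\item $\int_0^t\|b\|^2\le\varepsilon'\int_0^t\|a\|^2+C_{\varepsilon'}\int_0^t\|c\|^2+\cdots$;
\item $\int_0^t\|a\|^2\lesssim\int_0^t\|b\|^2+\cdots$.
\end{itemize}
Choose $\varepsilon'$ first and $\varepsilon$ second.
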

		\noindent for all $t \geq 0$. 
		\begin{remark}
			Similar to Proposition \ref{L coer1}, Lemma 4.2 in \cite{BD2016} is formulated for perturbations of the form $\pmb{F}= \pmb{\mu} + \pmb{g}$ within the weighted space $L^2_v (\pmb{\mu}^{-1/2})$. In our framework, with the perturbation $\pmb{F} = \pmb{\mu} + \sqrt{\pmb{\mu}} \pmb{f}$, the above estimate can be derived by following the same argument as in the proof of Proposition \ref{L coer1}. Therefore, we omit the detailed proof of Lemma \ref{L coer2}. 
		\end{remark}
		\begin{theorem} \label{L2 decay}
			Let $\pmb{f}(t,x,v)$ be a solution to \eqref{linear BE} with initial data $\pmb{f}_0$ and source term $\pmb{q}$. Suppose that initial data $\pmb{f}_0$ and source term $\pmb{q}$ satisfy 
			\begin{align*}
				\sum_{i=1}^{N+4} \left(\int_{\T^3} \langle \pmb{f}_0, \pmb{\phi}_i \rangle_{L^2_v}dx\right) \pmb{\phi}_i =\sum_{i=1}^{N+4} \left(\int_{\T^3} \langle \pmb{q}(t), \pmb{\phi}_i \rangle_{L^2_v} dx\right) \pmb{\phi}_i =0, \quad \forall t \geq 0. 
			\end{align*}
			Then, there exist constants $\lambda>0$ and $C_2>0$ such that 
			\begin{align*}
				e^{2\lambda t}\Vert  \pmb{f}(t) \Vert_{L^2}^2  \leq C_2 \Vert \pmb{f}_0 \Vert_{L^2}^2 + C_2 \int_0^t e^{2\lambda s} \Vert \pmb{q} \Vert_{L^2 }^2ds,
			\end{align*}
			for all $t \geq 0$.
		\end{theorem}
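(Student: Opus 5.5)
The plan is the standard energy method with a weighted time factor, built on Proposition~\ref{L coer1} and Lemma~\ref{L coer2}. We take the $L^2_{x,v}$ inner product of \eqref{linear BE} with $\pmb{f}$; the transport term vanishes on $\T^3$. This gives
\begin{align*}
\frac12\frac{d}{dt}\|\pmb{f}(t)\|_{L^2}^2 = \int_{\T^3}\langle \pmb{f},\pmb{L}(\pmb{f})\rangle_{L^2_v}dx + \langle \pmb{f},\pmb{q}\rangle_{L^2_{x,v}} \le -\lambda_L\|(\pmb{I}-\pmb{P}_{\pmb{L}})\pmb{f}\|_{L^2(\langle v\rangle^{\gamma/2})}^2 + \langle \pmb{f},\pmb{q}\rangle_{L^2_{x,v}}.
\end{align*}
Before using Lemma~\ref{L coer2} along the flow, we check that the zero-mean condition propagates. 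Pairing \eqref{linear BE} with $\pmb{\phi}_k$ and integrating in $x$ shows that $\int_{\T^3}\langle \pmb{f}(t),\pmb{\phi}_k\rangle_{L^2_v}dx$ is constant in time. Indeed, $\pmb{L}$ is self-adjoint with $\pmb{\phi}_k\in\mathrm{Ker}(\pmb{L})$, the transport term integrates to zero, and the contribution of $\pmb{q}$ vanishes by hypothesis. Hence the condition on $\pmb{f}_0$ holds for $\pmb{f}(t)$ at every time.

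Next we multiply by $e^{2\lambda t}$ and integrate over $[0,t]$. With $\delta>0$ small, we add $\delta$ times the estimate of Lemma~\ref{L coer2}, applied in the time-weighted form; this is where the main difficulty lies. The cleanest route is to apply Lemma~\ref{L coer2} to $\tilde{\pmb f}:=e^{\lambda t}\pmb{f}$. This function solves \eqref{linear BE} with source $\tilde{\pmb q}:=e^{\lambda t}\pmb{q}+\lambda e^{\lambda t}\pmb{f}$, which still satisfies the zero-mean condition by the propagation step. The lemma then yields
\begin{align*}
\int_0^t\|\pmb{P}_{\pmb{L}}\tilde{\pmb f}\|_{L^2(\langle v\rangle^{\gamma/2})}^2 \le N_{\tilde{\pmb f}}(t)-N_{\tilde{\pmb f}}(0)+C_1\int_0^t\|(\pmb{I}-\pmb{P}_{\pmb{L}})\tilde{\pmb f}\|^2_{L^2(\langle v\rangle^{\gamma/2})} + 2C_1\int_0^t e^{2\lambda s}\|\pmb{q}\|_{L^2}^2 + 2C_1\lambda^2\int_0^t\|\tilde{\pmb f}\|_{L^2}^2 .
\end{align*}

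We then combine the two estimates, using the energy identity for $\tilde{\pmb f}$:
\begin{align*}
\frac12\frac{d}{dt}\|\tilde{\pmb f}\|^2 \le \lambda\|\tilde{\pmb f}\|^2-\lambda_L\|(\pmb{I}-\pmb{P}_{\pmb{L}})\tilde{\pmb f}\|^2_{\gamma/2}+\langle\tilde{\pmb f},e^{\lambda t}\pmb{q}\rangle .
\end{align*}
Adding $\delta$ times the macroscopic estimate, and using $\langle v\rangle^{\gamma/2}\ge1$ together with $\|\tilde{\pmb f}\|^2\le \|\pmb{P}_{\pmb{L}}\tilde{\pmb f}\|^2_{\gamma/2}+\|(\pmb{I}-\pmb{P}_{\pmb{L}})\tilde{\pmb f}\|^2_{\gamma/2}$, we obtain a full dissipation of size $c\delta\int_0^t\|\tilde{\pmb f}\|^2_{L^2(\langle v\rangle^{\gamma/2})}$. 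This requires three choices. First, $\delta C_1\le\lambda_L/2$, so that the microscopic error is absorbed. Second, $\lambda$ small relative to $\delta$, so that the terms $\lambda\|\tilde{\pmb f}\|^2$ and $\delta C_1\lambda^2\|\tilde{\pmb f}\|^2$ are absorbed. Third, Young's inequality on $\langle\tilde{\pmb f},e^{\lambda s}\pmb{q}\rangle$, splitting it into an absorbable term $\eta\|\tilde{\pmb f}\|^2$ plus $C_\eta e^{2\lambda s}\|\pmb{q}\|^2$.

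Finally, the boundary terms satisfy $\delta|N_{\tilde{\pmb f}}(t)|\le\delta C_1\|\tilde{\pmb f}(t)\|^2\le\frac14\|\tilde{\pmb f}(t)\|^2$ for $\delta$ small, so they are absorbed into the left side. The remaining term $\delta|N_{\tilde{\pmb f}}(0)|$ is bounded by $C\|\pmb{f}_0\|^2$. Collecting everything gives $e^{2\lambda t}\|\pmb{f}(t)\|_{L^2}^2\le C_2\|\pmb{f}_0\|^2_{L^2}+C_2\int_0^te^{2\lambda s}\|\pmb{q}\|^2ds$. The only delicate points are the propagation of the zero-mean constraints, which is needed to apply Lemma~\ref{L coer2} to the modified source, and the order of constants: fix $\delta$ first, then $\eta$, then $\lambda$.
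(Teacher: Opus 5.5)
Your proposal is correct and follows essentially the same route as the paper. Both arguments take an energy estimate for $\tilde{\pmb f}=e^{\lambda t}\pmb f$ via Proposition~\ref{L coer1}, apply Lemma~\ref{L coer2} to $\tilde{\pmb f}$ with source $e^{\lambda t}\pmb q+\lambda\tilde{\pmb f}$, add a small multiple of that macroscopic bound, and then choose the small parameters in order. Beyond that, you make a few steps explicit that the paper handles loosely: you check that the zero-mean constraints propagate in time (the paper only asserts this), you keep the factor $2$ when splitting $\|\tilde{\pmb q}\|^2$, and you use unweighted $L^2_v$ orthogonality to bound $\|\tilde{\pmb f}\|_{L^2}^2$ by the two weighted pieces.
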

		\begin{proof}
			From \eqref{linear BE}, for any $\alpha>0$, we get 
			\begin{align*}
				\p_t (e^{\alpha t} \pmb{f}) + v\cdot \nabla_x (e^{\alpha t} \pmb{f}) = e^{\alpha t}\pmb{L}(\pmb{f}) +e^{\lambda t} \pmb{q} + \alpha e^{\alpha t} \pmb{f}. 
			\end{align*}
			For convenience of notation, we denote $\pmb{\tilde{f}}:= e^{\alpha t}\pmb{f}$. By the $L^2$-energy method together with Proposition \ref{L coer1}, we obtain 
			\begin{align*}
				\frac{1}{2} \frac{d}{dt} \Vert \pmb{\tilde{f}}(t) \Vert_{L^2 }^2 &=  \int_{\T^3}  \langle \pmb{L}(\pmb{\tilde{f}}),\pmb{\tilde{f}} \rangle_{L^2_{v}} dx  + \int_{\T^3} e^{\alpha t} \langle \pmb{q} , \pmb{\tilde{f}}\rangle_{L^2_v} dx  +\alpha \Vert \pmb{\tilde{f}}(t)\Vert_{L^2 }^2\\
				&\leq -\lambda_L \Vert  (\pmb{I}-\pmb{P}_{\pmb{L}})(\pmb{\tilde{f}})(t) \Vert^2_{L^2(\langle v \rangle^{\gamma/2})}+\int_{\T^3} e^{\alpha t} \langle \pmb{q} , \pmb{\tilde{f}}\rangle_{L^2_v} dx +\alpha \Vert \pmb{\tilde{f}}(t) \Vert_{L^2}^2.
			\end{align*}
			By taking the integration with respect to time, one has 
			\begin{align} \label{energy est} 
				&\Vert \pmb{\tilde{f}}(t) \Vert_{L^2 }^2 + 2\lambda_L \int_0^t 		\Vert (\pmb{I}-\pmb{P}_{\pmb{L}})(\pmb{\tilde{f}})(s) \Vert^2_{L^2 (\langle v \rangle^{\gamma/2})}ds \nonumber \\
				&\leq \Vert \pmb{f}_0 \Vert_{L^2 }^2 + 2\int_0^te^{\alpha s} \langle \pmb{q},\pmb{\tilde{f}}\rangle_{L^2 }(s) ds + 2\alpha \int_0^t \Vert \pmb{\tilde{f}}(s) \Vert_{L^2 }^2ds .
			\end{align}
			Note that $\pmb{\tilde{f}}$ satisfies
			\begin{align*}
				\sum_{i=1}^{N+4} \left( \int_{\T^3} \langle \pmb{\tilde{f}}(t) , \pmb{\phi}_i\rangle_{L^2_v} dx \right) \pmb{\phi}_i =0, \quad \forall t\geq 0 
			\end{align*}
			and solves the following equation
			\begin{align*}
				\p_t \pmb{\tilde{f}} + v\cdot \nabla_x \pmb{\tilde{f}} = \pmb{L}(\pmb{\tilde{f}}) + \alpha \pmb{\tilde{f}} + e^{\alpha t} \pmb{q},
			\end{align*} 
			with initial data $\pmb{\tilde{f}}(0) = \pmb{f}_0$. Therefore, by applying Lemma \ref{L coer2} with source term $e^{\alpha t} \pmb{q} + \alpha \pmb{\tilde{f}}$ in \eqref{linear BE}, we obtain the following estimate 
			\begin{align} \label{L coer3}
				\int_0^t \Vert \pmb{P}_{\pmb{L}} (\pmb{\tilde{f}})(s)\Vert_{L^2 (\langle v \rangle^{\gamma/2})}^2 ds &\leq N_{\pmb{\tilde{f}}} (t) - N_{\pmb{\tilde{f}}}(0) +C_1 \int_0^t \Vert (\pmb{I}-\pmb{P}_{\pmb{L}})(\pmb{\tilde{f}})(s) \Vert_{L^2 (\langle v \rangle^{\gamma/2})}^2 ds \nonumber \\
				&\quad+C_1 \int_0^t e^{2\alpha s}\Vert \pmb{q}(s) \Vert_{L^2 }^2ds + C_1 \alpha^2 \int_0^t \Vert \pmb{\tilde{f}}(s) \Vert_{L^2 }^2ds, 
			\end{align}
			For a fixed $\varepsilon>0$, we multiply \eqref{L coer3} by $\varepsilon$ and add it to \eqref{energy est} to obtain the following estimate: 
			\begin{align*}
				&\left[\Vert \pmb{\tilde{f}}(t) \Vert_{L^2 }^2 - \varepsilon N_{\pmb{\tilde{f}}}(t)\right] + C_{\varepsilon} \int_0^t \left[\Vert \pmb{P}_{\pmb{L}} (\pmb{\tilde{f}})(s)\Vert_{L^2 (\langle v \rangle^{\gamma/2})}^2 + \Vert (\pmb{I}-\pmb{P}_{\pmb{L}})(\pmb{\tilde{f}})(s) \Vert_{L^2 (\langle v \rangle^{\gamma/2})}^2\right] ds  \\
				&= \left[\Vert \pmb{\tilde{f}}(t) \Vert_{L^2 }^2 - \varepsilon N_{\pmb{\tilde{f}}}(t)\right] + C_{\varepsilon} \int_0^t  \Vert \pmb{\tilde{f}}(s) \Vert_{L^2 (\langle v \rangle^{\gamma/2})}^2ds \\
				&\leq \left[\Vert \pmb{f}_0 \Vert_{L^2 }^2 -\varepsilon N_{\pmb{\tilde{f}}}(0) \right] +2\int_{0}^t e^{\alpha s}\langle \pmb{q} , \pmb{\tilde{f}}\rangle_{L^2 }(s) ds + C_1 \varepsilon \int_0^t e^{2\alpha s}\Vert \pmb{q}(s) \Vert _{L^2}^2 ds\\
				&\quad + (2\alpha +C_1 \alpha^2 \varepsilon) \int_0^t \Vert \pmb{\tilde{f}}(s) \Vert_{L^2 }^2 ds \\ 
				&\leq   \left[\Vert \pmb{f}_0 \Vert_{L^2 }^2 -\varepsilon N_{\pmb{\tilde{f}}}(0) \right] + \left(\frac{C_{\varepsilon}}{2} +2\alpha + C_1 \alpha^2 \varepsilon \right)\int_0^t \Vert \pmb{f}(s) \Vert_{L^2 }^2 ds + \left(\frac{1}{2C_{\varepsilon}} +C_1\varepsilon\right)\int_0^t \Vert \pmb{q}(s) \Vert_{L^2 }^2 ds,
			\end{align*} 
			where $C_{\varepsilon} = \min\{2\lambda_L- \varepsilon C_1, \varepsilon\}$. If we choose $\varepsilon$ and $\alpha$ sufficently small such that $\frac{C_{\varepsilon}}{2} -2\alpha -C_1 \alpha^2 \varepsilon>0$ and $C_1 \varepsilon <\frac{1}{2}$, then there exists a positive constant $C_2>0$ such that  
			\begin{align*}
				e^{2\alpha t}\Vert \pmb{f}(t) \Vert_{L^2 }^2  \leq C_2 \Vert \pmb{f}_0 \Vert_{L^2 }^2 +C_2\int_0^t e^{2\alpha s}\Vert \pmb{q}(s) \Vert_{L^2 }^2 ds .   
			\end{align*}
			Thus, we derive the $L^2 $ estimate for the equation \eqref{linear BE} if we choose $\lambda = \alpha$. 
		\end{proof}
		\subsection{Linear $L^\infty$ decay theory} 
		In this subsection, we consider the equation \eqref{linear BE} with $\pmb{q} \equiv 0$ in the $v$-weighted $L^\infty$ space. If we denote $\pmb{h} = \pmb{h}(t,x,v)=w(v) \pmb{f}(t,x,v)$, the linearized Boltzmann equation with respect to $\pmb{h}$ can be written as 
		\begin{align} \label{weight BE}
			\p_t \pmb{h} + v \cdot \nabla_x \pmb{h} + \pmb{\nu} \pmb{h} = K_w (\pmb{h}). 
		\end{align} 
		Equivalently, for each $i\in \{1,\cdots,N\}$, we have 
		\begin{align} \label{ith BE} 
			\p_t h_i + v\cdot \nabla_x h_i + \nu_i(v) h_i  =K_{w,i}(\pmb{h}),
		\end{align}
		where $h_i$ is the $i$-th component of $\pmb{h}$. Building on ideas from \cite{Guo10}, the $L^2$ decay theory established in the previous subsection is essential for obtaining the $v$-weighted $L^\infty$ estimate for solutions to \eqref{weight BE}. The following lemma provides the corresponding $v$-weighted $L^\infty$ decay estimate.
		\begin{lemma} \label{small L infty}
			Assume that $\pmb{f}_0$ satisfy the condition \eqref{conserv}. Let $\pmb{h}(t,x,v)$ be the solution to the $v$-weighted linearized Boltzmann equation \eqref{weight BE} with initial data $\pmb{h}_0 = w(v) \pmb{f}_0(x,v)$.  
			Then there exist positive constants $C_3>0$ and $\lambda_1>0$ satisfying
			\begin{align*}
				\Vert \pmb{h}(t) \Vert_{L^\infty} \leq C_3 e^{-\lambda_1 t} \Vert \pmb{h}_0\Vert_{L^\infty}, \quad \forall t \geq 0.
			\end{align*} 
		\end{lemma}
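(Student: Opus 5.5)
We follow Guo's $L^2$--$L^\infty$ bootstrap \cite{Guo10}, adapted componentwise to the mixture. Write $K_{w,i}(\pmb h)(v)=\sum_j\int k_{w,ij}(v,v_*)h_j(v_*)\,dv_*$ with $k_{w,ij}(v,v_*)=k_{ij}(v,v_*)w(v)/w(v_*)$. Lemma \ref{K est} with $\theta=0$, $\beta=q$ gives
\[
\int_{\R^3}|k_{w,ij}(v,v_*)|e^{\varepsilon m|v-v_*|^2}\,dv_*\le \frac{C}{1+|v|}.
\]
Along the backward characteristic $x-v(t-s)$, the mild form of \eqref{ith BE} is
\[
h_i(t,x,v)=e^{-\nu_i(v)t}h_{0,i}(x-vt,v)+\int_0^t e^{-\nu_i(v)(t-s)}K_{w,i}(\pmb h)(s,x-v(t-s),v)\,ds .
\]
We substitute this formula once more for $h_j(s,x-v(t-s),v_*)$ inside $K_{w,i}$, producing a double Duhamel expansion with velocities $v_*$ and $v_{**}$ and an intermediate time $s_1$. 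By Lemma \ref{CFE}, $\nu_i\ge\nu_0$ for every species, so all exponential factors are bounded by $e^{-\nu_0(t-s_1)}$, uniformly in $i,j$.

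The terms are then treated as follows.
\begin{enumerate}
\item The initial-data term and the terms where the first $K$ acts on initial data are bounded by $Ce^{-\nu_0 t}(1+t)\|\pmb h_0\|_{L^\infty}$.
\item The double-$K$ term is split into pieces.
\begin{itemize}
\item Large velocities $|v|\ge N$, or $|v_*|\ge 2N$, or $|v_{**}|\ge 3N$: these use the $(1+|v|)^{-1}$ decay and the Gaussian factor in the kernel bound, giving $\frac{C}{N}\sup_s e^{-\lambda(t-s)}\|\pmb h(s)\|_{L^\infty}$.
\item Short times $s-s_1\le\kappa$: these contribute $C\kappa\sup\|\pmb h\|_{L^\infty}$.
\item The remaining bounded region: approximate $k_{w,ij}$ by a bounded, compactly supported kernel, then change variables $y=x-v(t-s)-v_*(s-s_1)$, whose Jacobian is $(s-s_1)^{-3}\ge\kappa^{-3}$. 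This bounds the piece by $C_{N,\kappa}\int_0^t e^{-\nu_0(t-s_1)}\|\pmb f(s_1)\|_{L^2}\,ds_1$, using $w\lesssim_N 1$ on bounded sets.
\end{itemize}
\item The weight is inserted as $e^{\lambda_1 t}$ with $\lambda_1<\min(\lambda,\nu_0)$.
\end{enumerate}

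The $L^2$ norm is then controlled by Theorem \ref{L2 decay} with $\pmb q=0$. The hypotheses require that the projection of $\pmb f_0$ onto $\mathrm{Ker}(\pmb L)$ integrated in $x$ vanishes, and this is exactly \eqref{conserv} expressed through the basis \eqref{phi}. The only point to check is the energy element $\pmb\phi_{N+4}$. Its pairing combines $\sum_i m_i\int|v|^2\sqrt{\mu_i}f_{0,i}$ with $\sum_i\int\sqrt{\mu_i}f_{0,i}$, and both vanish. Since the linear equation preserves these quantities, Theorem \ref{L2 decay} gives $\|\pmb f(s)\|_{L^2}\le Ce^{-\lambda s}\|\pmb f_0\|_{L^2}\le Ce^{-\lambda s}\|\pmb h_0\|_{L^\infty}$, where the last step uses $q>4$ so that $w^{-1}\in L^2_v$.

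Collecting the estimates gives
\[
\sup_{s\le t}e^{\lambda_1 s}\|\pmb h(s)\|_{L^\infty}\le C\|\pmb h_0\|_{L^\infty}+\Big(\frac CN+C\kappa\Big)\sup_{s\le t}e^{\lambda_1 s}\|\pmb h(s)\|_{L^\infty}.
\]
Choosing $N$ large and $\kappa$ small absorbs the last term into the left side and yields the claim.

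The main obstacle is the bounded-region change of variables in the multi-species setting. The kernels $k_{ij}$ mix species, and the exponent in Lemma \ref{K est} involves a mass $m$. We must verify that one uniform choice of $\varepsilon$ and one set of cutoff constants works for all pairs $(i,j)$. Because $N$ is finite, this should follow by taking minima and maxima over the species.
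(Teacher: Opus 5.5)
Your proposal is correct and follows essentially the same route as the paper. Both use a Guo-type double Duhamel expansion with large-velocity, short-time and compact-region splitting, the change of variables $y=\hat X(s')$ on $\{s-s'\ge\delta\}$ to reach $\|\pmb f(s')\|_{L^2}$, and then Theorem~\ref{L2 decay} (whose hypothesis is exactly \eqref{conserv}) before absorbing the small terms. The differences are cosmetic: you approximate $k^w_{ij}$ by a bounded compactly supported kernel where the paper passes to the $L^2$ norm directly (an $L^2_v$ bound on the kernel), and the Jacobian factor should read $(s-s_1)^{-3}\le\kappa^{-3}$, not $\ge$.
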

		\begin{proof}
			Fix $i \in \{1,2,\cdots, N\}$. We firstly set $\lambda_1 : = \min \left \{ \lambda, \frac{
				\nu_0}{2}\right\}$, where positive constants $\nu_0$ and $\lambda$ are defined in Lemma \ref{CFE} and Theorem \ref{L2 decay}, respectively. For fixed $(t,x,v) \in [0,T] \times \T^3 \times \R^3$, applying Duhamel's principle to \eqref{ith BE} and using the fact $2\lambda_1\le \nu_i(v)$ gives
			\begin{align} \label{small split} 
				|h_i(t,x,v)| &\leq e^{-2\lambda_1 t}|h_{0,i}(x-vt,v)| + \int_0^t e^{-2\lambda_1(t-s)} |{K}_{w,i} (\pmb{h})(s,x-(t-s)v,v)| ds \nonumber \\
				&:= I + II.
			\end{align}
			For $I$, we can directly show that 
			\begin{align} \label{small I} 
				I \leq e^{-2\lambda_1 t} \Vert \pmb{h}_{0} \Vert_{L^\infty}.
			\end{align} 
			For the term $II$, we apply the Duhamel principle once more to derive 
			\begin{align}\label{small II}
				II &\leq \sum_{j=1}^N \int_0^t e^{-2\lambda_1(t-s)} \int_{\R^3} |k^w_{ij}(v,v_*) h_j(s,x-(t-s)v,v_*)|dv_* ds \nonumber \\
				&\leq  \sum_{j=1}^N \int_0^t e^{-2\lambda_1(t-s)}\int_{\R^3} |k^w_{ij}(v,v_*)| e^{-2\lambda_1s} dv_* ds \Vert h_{0,j}\Vert_{L^\infty} \nonumber \\
				&\quad +  \sum_{j=1}^N \int_0^t e^{-2\lambda_1(t-s)} \int_{\R^3}  |k^w_{ij}(v,v_*)|  \int_0^s e^{-2\lambda_1(s-s')} |K_{w,j}(\pmb{h})| (s',\hat X(s'),v_*) ds' dv_* ds \nonumber \\
				&\leq e^{-\lambda_1 t} \Vert \pmb{h}_{0}\Vert_{L^\infty} \nonumber \\
				&\quad + \sum_{j,\ell=1}^N \int_0^t e^{-2\lambda_1 (t-s)} \int_{\R^3}  |k^w_{ij}(v,v_*)| \int_0^s e^{-2\lambda_1 (s-s')} \int_{\R^3} |k^w_{j\ell} (v_*,\eta)h_{\ell}(s',\hat X(s'),\eta)| d\eta ds'dv_*ds  \nonumber \\
				&:= II_1 + II_2,
			\end{align}
			where $\hat X(s') = x-(t-s) v- (s-s')v_*$ and we used Lemma \ref{K est}.
			Let us now estimate the term $II_2$. Let $M\geq 1$ be a large constant to be chosen later. We apply Lemma \ref{K est} and split the velocity domain into $\{|v_*|\geq M\}$ and $\{|v_*|\leq M\}$ as follows:		
			\begin{align*}
				II_2 &=  \sum_{j,\ell=1}^N \int_0^t e^{-2\lambda_1 (t-s)} \int_{|v_*| \geq M }  |k^w_{ij}(v,v_*)| \int_0^s e^{-2\lambda_1 (s-s')} \int_{\R^3} |k^w_{j\ell} (v_*,\eta)h_{\ell}(s',\hat X(s'),\eta)| d\eta ds'dv_*ds\\
				&\quad +  \sum_{j,\ell=1}^N \int_0^t e^{-2\lambda_1 (t-s)} \int_{|v_*| \leq M }  |k^w_{ij}(v,v_*)| \int_0^s e^{-2\lambda_1 (s-s')} \int_{\R^3} |k^w_{j\ell} (v_*,\eta)h_{\ell}(s',\hat X(s'),\eta)| d\eta ds'dv_*ds\\
				& \leq C\sup_{0\leq s \leq t} \left[e^{\lambda_1 s}\Vert \pmb{h}(s) \Vert_{L^\infty} \right]\sum_{j=1}^N \int_0^t e^{-2\lambda_1 t}e^{\lambda_1 s} \int_{|v_*|\geq M} \frac{ |k^w_{ij}(v,v_*)| }{1+|v_*|}dv_* ds \\
				&\quad + \sum_{j,\ell=1}^N \int_0^t e^{-2\lambda_1 (t-s)} \int_{|v_*| \leq M }  |k^w_{ij}(v,v_*)| \int_0^s e^{-2\lambda_1 (s-s')} \int_{\R^3} |k^w_{j\ell} (v_*,\eta)h_{\ell}(s',\hat X(s'),\eta)| d\eta ds'dv_*ds\\
				&\leq \frac{C}{M} e^{-\lambda_1 t}\sup_{0\leq s \leq t} \left[e^{\lambda_1 s}\Vert \pmb{h}(s) \Vert_{L^\infty} \right] \\
				& \quad + \sum_{j,\ell=1}^N \int_0^t e^{-2\lambda_1 (t-s)} \int_{|v_*| \leq M }  |k^w_{ij}(v,v_*)| \int_0^s e^{-2\lambda_1 (s-s')} \int_{\R^3} |k^w_{j\ell} (v_*,\eta)h_{\ell}(s',\hat X(s'),\eta)| d\eta ds'dv_*ds.
			\end{align*}
			To further estimate $II_2$, we divide the $\eta$-domain  into $\{|\eta| \geq 2M\}$ and $\{|\eta| \leq 2M\}$:
			\begin{align*}
				&\int_0^t e^{-2\lambda_1 (t-s)} \int_{|v_*| \leq M }  |k^w_{ij}(v,v_*)| \int_0^s e^{-2\lambda_1 (s-s')} \int_{\R^3} |k^w_{j\ell} (v_*,\eta)h_{\ell}(s',\hat X(s'),\eta)| d\eta ds'dv_*ds \\
				&=\int_0^t e^{-2\lambda_1 (t-s)} \int_{|v_*| \leq M }  |k^w_{ij}(v,v_*)| \int_0^s e^{-2\lambda_1 (s-s')} \int_{|\eta| \geq 2M} |k^w_{j\ell} (v_*,\eta)h_{\ell}(s',\hat X(s'),\eta)| d\eta ds'dv_*ds\\
				&\quad + \int_0^t e^{-2\lambda_1 (t-s)} \int_{|v_*| \leq M }  |k^w_{ij}(v,v_*)| \int_0^s e^{-2\lambda_1 (s-s')} \int_{|\eta| \leq 2M} |k^w_{j\ell} (v_*,\eta)h_{\ell}(s',\hat X(s'),\eta)| d\eta ds'dv_*ds\\
				&\leq \sup_{0\leq s \leq t} \left[e^{\lambda_1 s}\Vert \pmb{h}(s) \Vert_{L^\infty} \right] \\
				&\qquad \times \int_0^t e^{-2\lambda_1 (t-s)} \int_{|v_*| \leq M }  |k^w_{ij}(v,v_*)| \int_0^s e^{-2\lambda_1 s}e^{\lambda_1 s'} \int_{|\eta| \geq 2M} |k^w_{j\ell} (v_*,\eta)| e^{\frac{|v_*-\eta|^2}{32}} e^{-\frac{M^2}{32}}d\eta ds' dv_* ds\\
				&\quad +\int_0^t e^{-2\lambda_1 (t-s)} \int_{|v_*| \leq M }  |k^w_{ij}(v,v_*)| \int_{s-\delta}^{s} e^{-2\lambda_1 (s-s')} \int_{|\eta| \leq 2M} |k^w_{j\ell} (v_*,\eta)h_{\ell}(s',\hat X(s'),\eta)| d\eta ds'dv_*ds\\
				&\quad +\int_0^t e^{-2\lambda_1 (t-s)} \int_{|v_*| \leq M }  |k^w_{ij}(v,v_*)| \int_{0}^{s-\delta} e^{-2\lambda_1 (s-s')} \int_{|\eta| \leq 2M} |k^w_{j\ell} (v_*,\eta)h_{\ell}(s',\hat X(s'),\eta)| d\eta ds'dv_*ds\\
				&\leq C\left(\frac{1}{M}+\delta \right)e^{-\lambda_1 t}\sup_{0\leq s \leq t} \left[e^{\lambda_1 s}\Vert \pmb{h}(s) \Vert_{L^\infty} \right]\\
				&\quad + \int_0^t e^{-2\lambda_1(t-s)} \int_0^{s-\delta} e^{-2\lambda_1(s-s')} \left(\int_{|v_*|\leq M, |\eta| \leq 2M }|h_{\ell}(s',\hat X(s'),\eta)|^2 d\eta dv_*\right)^{1/2}ds'ds \\
				&\leq C\left(\frac{1}{M}+\delta \right)e^{-\lambda_1 t}\sup_{0\leq s \leq t} \left[e^{\lambda_1 s}\Vert \pmb{h}(s) \Vert_{L^\infty} \right]  \\
				&\quad + C_{q,M,\delta}\int_0^t e^{-2\lambda_1(t-s)} \int_0^{s-\delta} e^{-2\lambda_1 s}e^{\lambda_1 s'} \sup_{0\leq s' \leq s}\left[e^{\lambda_1 s'} \Vert f_\ell(s') \Vert_{L^2 } \right]ds'ds\\
				&\leq C\left(\frac{1}{M}+\delta \right)e^{-\lambda_1 t}\sup_{0\leq s \leq t} \left[e^{\lambda_1 s}\Vert \pmb{h}(s) \Vert_{L^\infty} \right]+ C_{q,M,\delta} e^{-\lambda_1 t}\sup_{0\leq s \leq t} \left[ e^{\lambda_1 s}\Vert \pmb{f} (s) \Vert_{L^2 }\right] ,
			\end{align*} 
			where we used Lemma \ref{K est} and applied the change of variables $v_* \mapsto y=\hat X(s')$ with $\left| \frac{dy}{dv_*}\right| = (s-s')^3 \geq \delta^3$. Therefore, we can bound the term $II_2$ by
			\begin{align} \label{small II2}
				II_2 &\leq C\left(\frac{1}{M}+\delta \right)e^{-\lambda_1 t}\sup_{0\leq s \leq t} \left[e^{\lambda_1 s}\Vert \pmb{h}(s) \Vert_{L^\infty} \right]+ C_{q,M,\delta} e^{-\lambda_1 t}\sup_{0\leq s \leq t} \left[ e^{\lambda_1 s}\Vert \pmb{f} (s) \Vert_{L^2 }\right].
			\end{align}
			Combining \eqref{small II} and \eqref{small II2} yields the bound for $II$:
			\begin{align} \label{small II final}
				II &\leq  e^{-\lambda_1 t} \Vert \pmb{h}_{0}\Vert_{L^\infty}   
				+C\left(\frac{1}{M}+\delta \right)e^{-\lambda_1 t}\sup_{0\leq s \leq t} \left[e^{\lambda_1 s}\Vert \pmb{h}(s) \Vert_{L^\infty} \right]+ C_{q,M,\delta} e^{-\lambda_1 t}\sup_{0\leq s \leq t} \left[ e^{\lambda_1 s}\Vert \pmb{f} (s) \Vert_{L^2 }\right].
			\end{align}
			Therefore, it follows from \eqref{small split}, \eqref{small I}, and \eqref{small II final} that
			\begin{align*}
				|h_i(t,x,v)| &\leq   e^{-\lambda_1 t} \Vert \pmb{h}_{0}\Vert_{L^\infty}   
				+C\left(\frac{1}{M}+\delta \right)e^{-\lambda_1 t}\sup_{0\leq s \leq t} \left[e^{\lambda_1 s}\Vert \pmb{h}(s) \Vert_{L^\infty} \right]+ C_{q,M,\delta} e^{-\lambda_1 t}\sup_{0\leq s \leq t} \left[ e^{\lambda_1 s}\Vert \pmb{f} (s) \Vert_{L^2 }\right].		
			\end{align*}
			Taking the summation over $i \in \{1,\cdots,N\}$, we obtain the following estimate 
			\begin{align*}
				\Vert \pmb{h}(t) \Vert_{L^\infty} &\leq Ce^{-\lambda_1t} \Vert \pmb{h}_0 \Vert_{L^2 }
				+C\left(\frac{1}{M}+\delta \right)e^{-\lambda_1 t}\sup_{0\leq s \leq t} \left[e^{\lambda_1 s}\Vert \pmb{h}(s) \Vert_{L^\infty} \right]+ C_{q,M,\delta} e^{-\lambda_1 t}\sup_{0\leq s \leq t} \left[ e^{\lambda_1 s}\Vert \pmb{f} (s) \Vert_{L^2 }\right].		
			\end{align*}
			From Theorem \ref{L2 decay} and definition of $\lambda_1$ we have 
			\begin{align*}
				\sup_{0\leq s \leq t} \left[e^{\lambda_1 s}  \Vert \pmb{f} (s) \Vert_{L^2 }\right] \leq C_2 \Vert \pmb{f}_0 \Vert_{L^2 }. 
			\end{align*}
			From a direct computation, we obtain
			\begin{align*}
				\Vert \pmb{f}_0 \Vert_{L^2 }^2 = \sum_{i=1}^N \Vert f_{0,i} \Vert_{L^2 }^2 &= \sum_{i=1}^N \left( \int_{\T^3 \times \R^3} |f_{0,i}(x,v)|^2 dvdx \right)\\
				&=\sum_{i=1}^N \left( \int_{\T^3 \times \R^3} w^{-2}(v)|h_{0,i}(x,v)|^2 dvdx \right)\\
				&\leq \sum_{i=1}^N \Vert h_{0,i} \Vert_{L^\infty}^2 \left( \int_{\T^3 \times \R^3} w^{-2}(v) dvdx\right)\\
				&\leq C_{q} \Vert \pmb{h}_0 \Vert_{L^\infty}^2.
			\end{align*}
			Consequently, if we choose $M$ sufficiently large and $\delta$ sufficiently small such that 
			\begin{align*}
				C\left(\frac{1}{M} +\delta\right) < \frac{1}{2},
			\end{align*}
			then there exists a positive constant $C_3=C_3(q)>0$ such that 
			\begin{align*}
				\Vert \pmb{h}(t) \Vert_{L^\infty} \leq C_3e^{-\lambda_1 t} \Vert \pmb{h}_0 \Vert_{L^\infty}, \quad \forall t\geq 0. 
			\end{align*}
		\end{proof}
		\subsection{The full nonlinear equation in a small amplitude regime } 
		Under a smallness assumption on the initial data, we prove the existence of a global-in-time solution to the full nonlinear Boltzmann equation in the $v$-weighted $L^\infty$ space.
		Moreover, we establish the uniqueness and positivity of the solution.
		These results correspond to Theorem \ref{thm:main2}.
		\begin{proof}[Proof of Theorem \ref{thm:main2}] 
			We set $\pmb{h} = w\pmb{f}$. 
			We first prove the existence of solutions to the multi-species Boltzmann equation. Let us define the iteration sequence starting with $\pmb{h}^0 \equiv \pmb{0}$. We consider the following iteration
			\begin{align*}
				\p_t \pmb{h}^{m+1} + v\cdot \nabla_x \pmb{h}^{m+1} +\pmb{\nu}\pmb{h}^{m+1} - K_{{w}} (\pmb{h}^{m+1}) = w \pmb{\Gamma} (\frac{\pmb{h}^m}{w}),
			\end{align*}
			with $\pmb{h}^{m+1}(0) = \pmb{h}_0$ to show that 
			\begin{align*}
				\sup_{m} \Vert \pmb{h}^m (t) \Vert_{L^\infty} \leq C  e^{-\lambda_1 t } \Vert \pmb{h}_0 \Vert_{L^\infty}.  
			\end{align*}
			For $m=1$, the estimate follows directly from Lemma \ref{small L infty}:
			\begin{align*}
				\Vert \pmb{h}^1 (t) \Vert_{L^\infty} \leq C e^{-\lambda_1 t }  \Vert \pmb{h}_0 \Vert_{L^\infty}.  
			\end{align*}
			For the inductive argument, we assume 
			\begin{align} \label{m assumption}
				\Vert \pmb{h}^m(t) \Vert_{L^\infty}\leq Ce^{-\lambda_1 t }\Vert \pmb{h}_0\Vert_{L^\infty}.
			\end{align}
			Following the idea of \cite{Guo10}, we split $\pmb{h}^{m+1} = \pmb{h}^{m+1}_g + \pmb{h}^{m+1}_{\Gamma}$, where $\pmb{h}_g^{m+1}$ satisfies the homogeneous linear weighted Boltzmann equation 
			\begin{align*}
				&\p_t \pmb{h}_g^{m+1}+ v\cdot \nabla_x \pmb{h}_g^{m+1} + \pmb{\nu} \pmb{h}_g^{m+1} -K_w (\pmb{h}_g^{m+1}) =0,\\
				&\pmb{h}_g^{m+1}(0,x,v) = \pmb{h}_0. 
			\end{align*}
			On the other hand, $\pmb{h}_{\Gamma}^{m+1}$ satisfies the inhomogeneous weighted Boltzmann equation 
			\begin{align*}
				&\p_t \pmb{h}_{\Gamma}^{m+1}+ v\cdot \nabla_x \pmb{h}_{\Gamma}^{m+1} + \pmb{\nu} \pmb{h}_{\Gamma}^{m+1} -K_w (\pmb{h}_{\Gamma}^{m+1}) =w\pmb{\Gamma}(\frac{\pmb{h}^m}{w}),\\
				&\pmb{h}_{\Gamma}^{m+1}(0,x,v) = \pmb{0}. 
			\end{align*} 
			It is directly deduced from Lemma \ref{small L infty} that 
			\begin{align*}
				\Vert \pmb{h}_g^{m+1} \Vert_{L^\infty} \leq Ce^{-\lambda_1 t }\Vert \pmb{h}_0 \Vert_{L^\infty}. 
			\end{align*}
			For $\pmb{h}_{\Gamma}^{m+1}$, fix $i \in \{1,\dots,N\}$. Using Duhamel's principle yields 
			\begin{align*}
				h^{m+1}_{\Gamma,i}(t,x,v) = \int_0^t e^{-\nu_i(v)(t-s)}  \left[K_{w,i} (\pmb{h}^{m+1}_{\Gamma})(s) +w\Gamma_i (\frac{\pmb{h}^m}{w})(s)\right]ds, 
			\end{align*} 
			where $h^{m+1}_{\Gamma,i}$ is the $i$-th component of $\pmb{h}^{m+1}_{\Gamma}$. This leads to 
			\begin{align*}
				\Vert h_{\Gamma,i}^{m+1} (t) \Vert_{L^\infty} &\leq  \int_0^t e^{-\nu_i(v) (t-s)} \left | w \Gamma_i (\frac{\pmb{h}^m}{w})(s)\right|ds \\
				&\quad + \sum_{j=1}^N \int_0^t e^{-\nu_i(v) (t-s)} \int_{\R^3} |k^w_{ij} (v,v_*) h_j^{m+1}(s,x-v(t-s),v_*)| dv_*ds\\
				&:= I+ II
			\end{align*}
			For $I$, we derive the estimate using Corollary \ref{nonlinear est} and the induction hypothesis \eqref{m assumption}:  
			\begin{align*}
				I&\leq C \int_0^t e^{-\nu_i(v)(t-s)} \langle v \rangle^{\gamma} \Vert \pmb{h}^m(s) \Vert_{L^\infty}^2ds \\
				&\leq C \Vert \pmb{h}_0 \Vert_{L^\infty}^2  \int_0^t e^{-\nu_i(v)(t-s)}e^{-2\lambda_1 s} \nu_i(v)  ds\\
				&\leq C e^{-\lambda_1 t} \Vert \pmb{h}_0 \Vert_{L^\infty}^2, 
			\end{align*}
			where we have used the fact $\langle v \rangle ^{\gamma} \sim\nu_i(v)$ from Lemma \ref{CFE} and the choice of $\lambda_1 = \min \{ \frac{\nu_0}{2}, \lambda \}$ in the proof of Lemma \ref{small L infty}. To handle the remaining term $II$, we substitute the mild formulation of $\pmb{h}^{m+1}_{\Gamma}$ into the integral. Then, we get
			\begin{align*}
				II &= \sum_{j=1}^N \int_0^t e^{-\nu_i(v) (t-s)} \int_{\R^3} |k^w_{ij} (v,v_*) h_{\Gamma,j}^{m+1}(s,x-v(t-s),v_*)| dv_*ds\\
				&\leq \sum_{j=1}^N \int_0^t e^{-\nu_i(v) (t-s)} \int_{\R^3} |k^w_{ij} (v,v_*)|\int_0^s e^{-\nu_j(v_*)(s-s')}|K_{w,j}(\pmb{h}^{m+1}_{\Gamma})(s')| ds'dv_*ds \\
				&\quad + \sum_{j=1}^N \int_0^t e^{-\nu_i(v) (t-s)} \int_{\R^3} |k^w_{ij} (v,v_*)|\int_0^s e^{-\nu_j(v_*)(s-s')} \left|w \Gamma_j (\frac{\pmb{h}^m}{w})(s')\right|ds'dv_*ds\\
				&:= II_1 + II_2. 
			\end{align*} 
			For $II_1$, we apply Duhamel's principle once more to $h_{\Gamma,j}^{m+1}$ and then repeat the velocity decomposition used in the proof of Lemma \ref{small L infty}. We fix a large constant $M\geq 1$. The estimates for the large velocity region and the short-time interval are identical to those in Lemma \ref{small L infty}. Hence, we only need to control the compact velocity contribution, which is reduced to the following $L^2$ term:		
			\begin{align*}
				\int_0^t e^{-\nu_i(v) (t-s)} \int_0^{s-\delta} e^{-\nu_{j}(v_*)(s-s')} \left( \int_{|v_*| \leq M, |\eta| \leq 2M} |h_{\Gamma,\ell}^{m+1} (s',\hat X(s'),\eta)|^2 d\eta dv_*\right)^{1/2} ds'ds.
			\end{align*}
			Applying Corollary \ref{nonlinear est} and Theorem \ref{L2 decay} with $\pmb{f}_0=\pmb{0}$ and $\pmb{q} = \pmb{\Gamma}({\pmb{f}^m})$, one obtains 
			\begin{align*}
				&\int_0^t e^{-\nu_i(v) (t-s)} \int_0^{s-\delta} e^{-\nu_j(v_*)(s-s')} \left( \int_{|v_*| \leq M, |\eta| \leq 2M} |h_{\Gamma,\ell}^{m+1} (s',\hat X(s'),\eta)|^2 d\eta dv_*\right)^{1/2} ds'ds\\
				&\leq  C_{q,M} 	\int_0^t e^{-\nu_i(v) (t-s)} \int_0^{s-\delta} e^{-\nu_j(v_*)(s-s')} \left( \int_{|v_*| \leq M, |\eta| \leq 2M} |f_{\Gamma,\ell}^{m+1} (s',\hat X(s'),\eta)|^2 d\eta dv_*\right)^{1/2} ds'ds\\
				&\leq C_{q,M,\delta} \int_0^t e^{-\nu_i(v) (t-s)} \int_0^{s-\delta} e^{-\nu_j(v_*)(s-s')} \Vert f^{m+1}_{\Gamma,\ell} (s') \Vert_{L^2 } ds'ds \\
				&\leq C_{q,M,\delta} \int_0^t e^{-\nu_i(v) (t-s)} \int_0^{s-\delta} e^{-\nu_j(v_*)(s-s')} \left(\int_0^{s'} e^{-2\lambda (s'-\tau)} \Vert \pmb{\Gamma}(\pmb{f}^m)(\tau) \Vert_{L^2 }^2 d\tau\right)^{1/2}ds'ds \\
				&\leq C_{q,M,\delta} \int_0^t e^{-\nu_i(v) (t-s)} \int_0^{s-\delta} e^{-\nu_j(v_*)(s-s')} \left(\int_0^{s'} e^{-2\lambda (s'-\tau)}  \Vert w^{-1} w\pmb{\Gamma}(\pmb{f}^m)(\tau) \Vert_{L^2 }^2 d\tau \right)^{1/2} ds'ds \\
				&\leq C_{q,M,\delta} \int_0^t e^{-\nu_i(v) (t-s)} \int_0^{s-\delta} e^{-\nu_j(v_*)(s-s')} \\
				&\quad \times \left(\int_0^{s'} e^{-2\lambda (s'-\tau)} \Vert \pmb{h}^m (\tau)\Vert_{L^\infty}^4 \left(\int_{\T^3 \times \R^3} w^{-2}(v) \langle v \rangle ^{2\gamma} dvdx\right) d\tau \right)^{1/2}ds'ds\\
				&\leq C_{q,M,\delta}\int_0^t e^{-2\lambda_1 (t-s)} \int_0^{s-\delta} e^{-2\lambda_1(s-s')} \left(\int_0^{s'} e^{-2\lambda_1 (s'-\tau)} e^{-4\lambda_1 \tau}\Vert \pmb{h}_0 \Vert^4_{L^\infty} d\tau \right)^{1/2} ds' ds \\
				&\leq C_{q,M,\delta} e^{-\lambda_1 t}\Vert \pmb{h}_0 \Vert^2_{L^\infty}.  
			\end{align*}
			For the remaining term $II_2$, we use Lemma \ref{K est} and Corollary \ref{nonlinear est} to derive the estimate 
			\begin{align*}
				II_2 &\leq  \sum_{j=1}^N \int_0^t e^{-\nu_i(v) (t-s)} \int_{\R^3} |k^w_{ij} (v,v_*)|\int_0^s e^{-\nu_j(v_*)(s-s')} \left|w \Gamma_j (\frac{\pmb{h}^m}{w})(s')\right|ds'dv_*ds\\
				&\leq C\sum_{j=1}^N \int_0^t e^{-\nu_i(v) (t-s)} \int_{\R^3} |k^w_{ij} (v,v_*)|\int_0^s e^{-\nu_j(v_*)(s-s')} \langle v_* \rangle^{\gamma} \Vert \pmb{h}^m(s') \Vert_{L^\infty}^2ds'dv_*ds\\
				&\leq C\sum_{j=1}^N \int_0^t e^{-2\lambda_1 (t-s)} \int_{\R^3} |k^w_{ij} (v,v_*)|\int_0^s e^{-\frac{\nu_j(v_*)}{2}(s-s')} \langle v_* \rangle^{\gamma} e^{-\lambda_1(s-s')}e^{-2\lambda_1s'} \Vert \pmb{h}_0 \Vert_{L^\infty}^2 ds'dv_*ds\\
				&\leq Ce^{-\lambda_1 t} \Vert \pmb{h}_0\Vert_{L^\infty}^2,
			\end{align*}
			where we have used the fact $\langle v_* \rangle^{\gamma} \sim \nu_j(v_*)$. Therefore, if $\|\pmb{h}_0\|_{L^\infty} \le \kappa$ for some sufficiently small $\kappa>0$, then		\begin{align*}
				\Vert \pmb{h}^{m+1}(t)\Vert_{L^\infty}\leq  \Vert \pmb{h}^{m+1}_g(t)\Vert_{L^\infty}+\Vert \pmb{h}^{m+1}_{\Gamma}(t)\Vert_{L^\infty} \leq C e^{-\lambda_1 t} \Vert \pmb{h}_0 \Vert_{L^\infty}.
			\end{align*}
			By induction argument, we conclude that 
			\begin{align*} 
				\sup_m \Vert \pmb{h}^{m}(t)\Vert_{L^\infty}  \leq C e^{-\lambda_1 t} \Vert \pmb{h}_0 \Vert_{L^\infty}.
			\end{align*}
			We next show that $\{\pmb{h}^m\}_{m\ge0}$ is a Cauchy sequence. In order to prove this, we consider the equation for $\pmb{h}^{m+1} - \pmb{h}^m$: 
			\begin{align*}
				(\p_t + v\cdot \nabla_x + \pmb{\nu}(v) -K_w)[\pmb{h}^{m+1}-\pmb{h}^{m}] = w\left\{\pmb{\Gamma}\left(\frac{\pmb{h}^{m}}{w}\right) -\pmb{\Gamma}\left(\frac{\pmb{h}^{m-1}}{w}\right)  \right\},
			\end{align*}
			with zero initial data. 
			Fix $i \in \{1, \dots, N\}$. For the $i$-th component, the above equation becomes 
			\begin{align*}
				(\p_t + v\cdot \nabla_x + \nu_i(v))[h_i^{m+1} - h_i^m] = K_{w,i} (\pmb{h}^{m+1} - \pmb{h}^m) +w\left(\Gamma_i \left(\frac{\pmb{h}^{m}}{w}\right) -\Gamma_i\left(\frac{\pmb{h}^{m-1}}{w}\right)\right).
			\end{align*}
			Note that 
			\begin{align*}
				\Gamma_i \left(\frac{\pmb{h}^{m}}{w}\right) -\Gamma_i\left(\frac{\pmb{h}^{m-1}}{w}\right) &=  \sum_{j=1}^N \left[\Gamma \left( \frac{h_i^{m}}{w}, \frac{h_j^{m}}{w}\right)-\Gamma \left( \frac{h_i^{m-1}}{w}, \frac{h_j^{m-1}}{w}\right)\right]\\
				&= \sum_{j=1}^N \left[\Gamma \left( \frac{h_i^{m}-h_i^{m-1}}{w}, \frac{h_j^{m}}{w}\right)+\Gamma \left( \frac{h_i^{m-1}}{w}, \frac{h_j^{m}-h_j^{m-1}}{w}\right)\right].
			\end{align*}
			By Corollary \ref{nonlinear est}, one has 
			\begin{align*}
				\left|w\left(\Gamma_i \left(\frac{\pmb{h}^{m}}{w}\right) -\Gamma_i\left(\frac{\pmb{h}^{m-1}}{w}\right)\right)\right| &\leq C\langle v \rangle^{\gamma} 
				\left(\sum_{j=1}^N  \Vert h_j^m \Vert_{L^\infty} \Vert h_i^m - h_i^{m-1} \Vert_{L^\infty} + \Vert h_i^{m-1} \Vert_{L^\infty}\sum_{j=1}^N \Vert h_j^{m} -h_j^{m-1} \Vert_{L^\infty} \right)\\
				&\leq C\langle v \rangle^{\gamma} \left(\Vert \pmb{h}^{m-1} \Vert_{L^\infty}+\Vert \pmb{h}^m \Vert_{L^\infty} \right) \Vert \pmb{h}^m - \pmb{h}^{m-1} \Vert_{L^\infty}. 
			\end{align*}
			Combining the method used to treat the ${h}^{m+1}_{\Gamma,i}$ term with the above estimate, we derive the following estimate
			\begin{align*}
				\Vert (h^{m+1}_i - h^{m}_i)(t) \Vert_{L^\infty} \leq C \sup_{0\leq s \leq t}  \left[e^{\lambda_1 s } (\Vert \pmb{h}^{m-1}(s) \Vert_{L^\infty}+\Vert \pmb{h}^{m}(s) \Vert_{L^\infty})\right] \sup_{0\leq s \leq t} \Vert (\pmb{h}^m - \pmb{h}^{m-1}) (s)\Vert_{L^\infty}.
			\end{align*}
			Summing over $i$, we obtain 
			\begin{align*}
				\sup_{0\leq s \leq t}\Vert (\pmb{h}^{m+1} - \pmb{h}^{m})(s) \Vert_{L^\infty} &\leq C \sup_{0\leq s \leq t}  \left[e^{\lambda_1 s } (\Vert \pmb{h}^{m-1}(s) \Vert_{L^\infty}+\Vert \pmb{h}^{m}(s) \Vert_{L^\infty})\right] \sup_{0\leq s \leq t} \Vert (\pmb{h}^m - \pmb{h}^{m-1}) (s)\Vert_{L^\infty}\\
				&\leq 2C\Vert \pmb{h}_0 \Vert_{L^\infty} \sup_{0\leq s \leq t} \Vert (\pmb{h}^m - \pmb{h}^{m-1}) (s)\Vert_{L^\infty}.
			\end{align*}
			Hence, by taking $\kappa$ sufficiently small such that 
			\begin{align*}
				2C \Vert \pmb{h}_0\Vert_{L^\infty}<\frac{1}{2},
			\end{align*}
			we deduce that $\pmb{h}^m$ is a Cauchy sequence and the limit is a solution to the full Boltzmann equation. \\ 
			\indent We now prove uniqueness. Suppose that $\tilde{\pmb{h}}$ is another solution to the full Boltzmann equation with the same initial data $\pmb{h}_0$, satisfying the same a priori bound 
			\begin{align*}
				\sup_{0\leq t \leq T_0} \Vert \pmb{\tilde{h}}(t) \Vert_{L^{\infty} } \leq \kappa.
			\end{align*}
			Then, the difference $\pmb{h}-\pmb{\tilde{h}}$ satisfies the equation 
			\begin{align*}
				(\p_t + v\cdot \nabla_x + \pmb{\nu}(v) -K_w)[\pmb{h}-\pmb{\tilde{h}}] = w\left\{\pmb{\Gamma}\left(\frac{\pmb{h}}{w}\right) -\pmb{\Gamma}\left(\frac{\pmb{\tilde{h}}}{w}\right)\right\},
			\end{align*} 
			with zero initial data. 
			Using the bilinear estimate for $\pmb{\Gamma}$ as in the proof of existence, we obtain for $0\leq t \leq T_0$, 
			\begin{align*}
				\Vert (\pmb{h} - \pmb{\tilde{h}})(t)\Vert_{L^\infty} \leq C_{T_0} \sup_{0\leq t \leq T_0} \left( \Vert \pmb{h}(t)\Vert_{L^\infty} + \Vert \pmb{\tilde{h}}(t) \Vert_{L^\infty} \right)  \sup_{0\leq t \leq T_0} \Vert (\pmb{h}-\pmb{\tilde{h}})(t) \Vert_{L^\infty}. 
			\end{align*}
			If $\kappa>0$ is sufficiently small such that 
			\begin{align*}
				C_{T_0} \sup_{0\leq t \leq T_0} \left( \Vert \pmb{h}(t)\Vert_{L^\infty} + \Vert \pmb{\tilde{h}}(t) \Vert_{L^\infty} \right) <1,
			\end{align*}
			then 
			\begin{align*}
				\sup_{0\leq t \leq T_0} \Vert (\pmb{h}-\pmb{\tilde{h}})(t) \Vert_{L^\infty} = 0. 
			\end{align*}
			Finally, it remains to show that $F_i = \mu_i + \sqrt{\mu_i} f_i \geq 0$ for all $i \in \{1,\cdots, N\}$. To prove positivity of the solution, we consider another equation 
			\begin{align*}
				\p_t F_i^{m+1} + v\cdot \nabla_x F_i^{m+1} +\nu_i(\pmb{f}^m) F_i^{m+1} = \sum_{j=1}^N Q_{ij}^+ (F_i^m,F_j^m),
			\end{align*}   
			with $F_i^{m+1}(0,x,v) = F_{0,i} =\mu_i +\sqrt{\mu_i} f_{0,i}$, starting from $F_i^0 \equiv \mu_i(v)$. Here, we define $\nu_i$ as 
			\begin{align*}
				\nu_i (\pmb{f}^m) = \sum_{j=1}^n \int_{\R^3\times\S^2} B_{ij}(v-v_*,\sigma) F_j^m (v_*) d\sigma dv_*. 
			\end{align*}
			Applying Duhamel's principle gives 
			\begin{align*}
				F_i^{m+1} (t,x,v) &= \exp \left[-\int_0^t \nu_i (\pmb{f}^m) (s,x-v(t-s),v) ds\right] F_{0,i}(x-tv,v)\\
				&\quad + \sum_{j=1}^N\int_0^t \exp \left[-\int_s^t \nu_i(\pmb{f}^m)(s',x-(t-s')v, v)ds'\right] Q_{ij}^+ (F_i^m,F_j^m) (s,x-v(t-s),v)ds. 
			\end{align*}
			From induction argument, the right-hand side terms are positive by $F_i^m \geq 0$ for all $i \in \{1,\cdots,N\}$. By taking the limit, we have the positivity of the solution to the full Boltzmann equation. 
		\end{proof}
		
		\section{Large-amplitude perturbation problem}\label{sec:LAPP}
		In this section, we develop the analytical tools required for the large-amplitude regime, where the smallness assumption $\|w\pmb{f}_0\|_{L^\infty} \le \kappa$ imposed in Section~\ref{sec:small_regime} is no longer available. We first establish a uniform positive lower bound for the nonlinear collision frequency $\mathcal{R}_i(\pmb{f})$ (Proposition~\ref{Rf est}). Building on this lower bound, we then derive a weighted $L^\infty$ Gr\"onwall-type estimate which leads the solution into the small-amplitude regime.
		
		To derive the $L^\infty$ estimate, the estimate for the gain term in Lemma \ref{gain est_new} plays a crucial role in using the smallness of the initial relative entropy. Thus, we consider the mild formulation of the following equation: 
		\begin{align} \label{multi pert}
			\p_t f_i + v\cdot \nabla_x f_i +\mathcal{R}_i(\pmb{f}) f_i = K_i (\pmb{f}) + \sum_{j=1}^N \Gamma^+(f_i,f_j),
		\end{align}
		where 
		\begin{align*}
			\mathcal{R}_i(\pmb{f}) = \sum_{j=1}^N \int_{\R^3 \times \S^2} B_{ij}(v-v_*,\sigma) [\mu_j(v_*)+\sqrt{\mu_j(v_*)} f_j(v_*)] d\sigma dv_*.
		\end{align*}
\subsection{$\mathcal{R}_i(\pmb{f})$ estimate}
A crucial step in the global $L^\infty$ estimate is to ensure that the nonlinear collision frequency $\mathcal{R}_i(\pmb{f})$ provides sufficient damping. The loss-term contribution to $\mathcal{R}_i(\pmb{f})$, 
\begin{align*}
\sum_j \int_{\mathbb{R}^3 \times \mathbb{S}^2} B_{ij}(v-v_*,\sigma)\sqrt{\mu_j(v_*)} f_j(v_*)\, d\sigma dv_*,
\end{align*} 
may weaken the damping effect of the collision frequency $\nu_i(v)$. Therefore, the main difficulty is to verify whether the dissipative structure induced by $\nu_i(v)$ remains dominant under the smallness of the initial relative entropy. Our argument tracks the propagation of this smallness through both the linear operator $K_{w,j}$ and the full nonlinear operator $w\Gamma_j$, by combining the relative entropy estimate of Lemma \ref{RE} with the pointwise gain estimate of Lemma \ref{gain est_new}.
\begin{proof}[Proof of Proposition~\ref{Rf est}]
	Recall that $\mathcal{R}_i(\pmb{f})$ is defined by
	\begin{align*}
		\mathcal{R}_i(\pmb{f}) = \sum_{j=1}^N \mathcal{R}_{ij}(\pmb{f}):= \sum_{j=1}^N \int_{\R^3 \times \S^2} B_{ij}(v-v_*,\sigma) [\mu_j(v_*)+\sqrt{\mu_j(v_*)}f_j(v_*)] d\sigma dv_* .
	\end{align*}
	Without loss of generality, we focus on a fixed species $j$, since the contributions from other species can be treated similarly. The component $\mathcal{R}_{ij}(\pmb{f})$ can be decomposed as 
	\begin{align*}
		\mathcal{R}_{ij}(\pmb{f}) &=\int_{\R^3 \times \S^2} B_{ij}(v-v_*,\sigma) [\mu_j(v_*)+\sqrt{\mu_j(v_*)}f_j(v_*)] d\sigma dv_* \cr
		&= \nu_{ij}(v)+\int_{\R^3 \times \S^2} B_{ij}(v-v_*,\sigma)\sqrt{\mu_j(v_*)}f_j(v_*) d\sigma dv_*,
	\end{align*}
	where $\nu_{ij}(v)$ is the linear collision frequency defined in \eqref{nu_def}.
	By Lemma \ref{CFE}, the collision frequency satisfies $\nu_{ij}(v) \simeq (1+|v|)^\gamma$, and  we use the assumption \eqref{kernel} on the collision kernel:
	\begin{align*}
		B_{ij}(v-v_*,\sigma)\leq C^\Phi_{ij}|v-v_*|^{\gamma}b_{ij}(\cos\theta) \leq C(1+|v|)^\gamma(1+|v_*|)^\gamma\leq C_{\nu}\nu_{ij}(v)\nu_{ij}(v_*).
	\end{align*}
	If we denote $h_j(t,x,v)=w(v)f_j(t,x,v)$, then we obtain a lower bound for $\mathcal{R}_{ij}(\pmb{f})$:
	\begin{align*}
		\mathcal{R}_{ij}(\pmb{f}) &\geq \nu_{ij}(v)\left[1-C_{\nu} \int_{\R^3}\nu_{ij}(v_*)\sqrt{\mu_j(v_*)}|f_j(v_*)|  dv_*\right]\cr
		& \geq \nu_{ij}(v)\left[1-C_{\nu} \int_{\R^3}e^{-\frac{m_j|v_*|^2}{8}}|h_j(v_*)|  dv_*\right].
	\end{align*}
	Hence, our goal is to find the transition time $\tilde{t} > 0$ such that for all $t \geq \tilde{t}$, the following estimate holds:
	\begin{align} \label{R}
	  	\int_{\mathbb{R}^3} e^{-\frac{m_j|v|^2}{8}} |{h}_j(t,x,v)| dv \leq 		\frac{1}{2C_{\nu}}.
	\end{align}
	To verify the estimate \eqref{R}, the governing equation for the weighted perturbation $h_j(t,x,v)$ is given by 
	\begin{align*} 
		\partial_t h_j + v \cdot \nabla_x h_j + \nu_j(v) h_j = K_{w,j}(\pmb{h}) + w \Gamma_{j}(\pmb{f}),
	\end{align*}
where the collision frequency $\nu_j$ is defined in \eqref{nu_def}. The weighted linear operator $K_{w,j}(\pmb{h})$ and the nonlinear term $w\Gamma_{j}(\pmb{f})$ are defined as:
\begin{align*}
    K_{w,j}(\pmb{h}) &=\sum_{k=1}^N \int_{\mathbb{R}^3} k^w_{jk}(v, v_*) h_k(v_*) dv_*:= \sum_{k=1}^N\int_{\R^3} k_{jk}(v,v_*) \frac{w(v)}{w(v_*)} h_k(v_*) dv_*, \\
    w\Gamma_{j}(\pmb{f}) &= w(v) \sum_{k=1}^N \int_{\R^3 \times \S^2} B_{jk}(v-v_*, \sigma) \sqrt{\mu_k(v_*)}\left[ f_j(v') f_k(v_*')-f_j(v) f_k(v_*) \right]d\sigma dv_*.
\end{align*}
		Along the trajectory $X(s):= x-(t-s)v$, we can express the mild form of $h_j$ by using Duhamel's principle		\begin{align*}
			h_j(t, x, v) &= e^{-\nu_{j}(v) t} h_{0,j}(x - tv, v) 
			 +\int_{0}^{t} e^{-\nu_{j}(v) (t - s)}[K_{w,j}(\pmb{h})+w\Gamma_{j}(\pmb{f})](s,X(s),v)  ds.
		\end{align*}
		Using the mild formulation, we decompose the left-hand side of \eqref{R} as follows: 		
		\begin{align}\label{h intv}
				&\int_{\R^3}e^{-\frac{m_j|v|^2}{8}}|h_j(t,x,v)|dv \cr
				& \quad\leq \int_{\R^3}e^{-\frac{m_j|v|^2}{8}}e^{-\nu_{j}(v) t}|h_{0,j}(x - tv, v)|dv\cr
				& \qquad+\int_{t-\delta}^{t} \int_{\R^3} e^{-\frac{m_j|v|^2}{8}} e^{-\nu_{j}(v) (t - s)}  |[K_{w,j}(\pmb{h})+w(v)\Gamma_{j}(\pmb{f})]|(s,X(s),v) dv ds\cr
				& \qquad+\int_{0}^{t-\delta}  \int_{\R^3}e^{-\frac{m_j|v|^2}{8}}e^{-\nu_{j}(v) (t - s)}   |[K_{w,j}(\pmb{h})+w(v)\Gamma_{j}(\pmb{f})]|(s,X(s),v) dv ds,
		\end{align}
		where $0< \delta \leq 1$ is a small positive constant to be determined later. To establish the desired result, we estimate each term on the right-hand side of \eqref{h intv}. We begin with the first term, which represents the contribution from the initial data. Using the uniform lower bound of the collision frequency $\nu_j(v) \geq \nu_0 > 0$ from Lemma \ref{CFE}, a direct calculation yields
		\begin{align}\label{rf1}
			\int_{\R^3}e^{-\frac{m_j|v|^2}{8}}e^{-\nu_{j}(v) t}|h_{0,k}(x - tv, v)|dv\leq Ce^{-\nu_0t}\|h_{0,j}\|_{L^\infty}.
		\end{align}
		By Lemma \ref{K est}, we have
\[
\int_{\mathbb{R}^3} |k^w_{jk}(v,v_*)| \, dv_* \le \frac{C}{1+|v|}. 
\]
Hence,
\begin{align*}
|K_{w,j}(\pmb{h})(s,X(s),v)| &\le \sum_{k=1}^N \int_{\mathbb{R}^3} |k^w_{jk}(v,v_*)| |h_k(s,X(s),v_*)| \, dv_* \\
&\le \sum_{k=1}^N \left(\int_{\mathbb{R}^3} |k^w_{jk}(v,v_*)| \, dv_* \right)\|h_k(s)\|_{L^\infty} \\
&\le C \|\pmb{h}(s)\|_{L^\infty}.
\end{align*}
Combining this with the nonlinear estimate in Corollary \ref{nonlinear est}, we obtain
\begin{align*}
    e^{-\nu_{j}(v) (t - s)} |[K_{w,j}(\pmb{h})+w\Gamma_{j}(\pmb{f})]|(s,X(s),v)\leq C e^{-\nu_0(t-s)} \left( \|\pmb{h}(s)\|_{L^\infty} + \|\pmb{h}(s)\|_{L^\infty}^2 \right).
\end{align*}
		Thus, we have 
		\begin{align}\label{rf2}
					&\int_{t-\delta}^{t}  \int_{\R^3} e^{-\frac{m_j|v|^2}{8}}e^{-\nu_{j}(v) (t - s)} |[K_{w,j}(\pmb{h})+w\Gamma_{j}(\pmb{f})]|(s,X(s),v)dv ds\cr
				&\leq  C\sup_{0\leq s\leq t} \left\{\|\pmb{h}(s)\|_{L^\infty}\left(1+ \|\pmb{h}(s)\|_{L^\infty}\right)\right\} \int_{t-\delta}^{t}e^{-\nu_0 (t - s)}ds\int_{\R^3}e^{-\frac{m_j|v|^2}{8}}dv \cr
				&\leq  C\delta\sup_{0\leq s\leq t} \left\{\|\pmb{h}(s)\|_{L^\infty}\left(1+ \|\pmb{h}(s)\|_{L^\infty}\right)\right\} .
		\end{align}
		Next, we estimate the remaining term
		\begin{align*}
			\int_{0}^{t-\delta} \int_{\R^3}e^{-\frac{m_j|v|^2}{8}}e^{-\nu_{j}(v) (t - s)}   |[K_{w,j}(\pmb{h})+w\Gamma_{j}(\pmb{f})]|(s,X(s),v) dv ds.
		\end{align*}
	We first focus on the term involving the linear operator $K_{w,j}$. Using the kernel representation $k^w_{jk}$, we can further bound 
	\begin{align*}
  &\int_{0}^{t-\delta}\int_{\R^3}e^{-\frac{m_j|v|^2}{8}} e^{-\nu_{j}(v) (t - s)} |K_{w,j}(\pmb{h})(s,X(s),v)| dvds \\
  &\leq \sum_{k=1}^{N} \int_{0}^{t-\delta}\int_{\R^3}e^{-\frac{m_j|v|^2}{8}} e^{-\nu_{j}(v) (t - s)}\int_{\mathbb{R}^3} |k^w_{jk}(v, v_*) h_k(s, X(s), v_*)| dv_*dvds.
\end{align*}
To handle the integration over the velocity variables $(v, v_*)$, we fix a sufficiently large number $L\geq 1$ and decompose the velocity space into three disjoint subsets $\Omega_1, \Omega_2,$ and $\Omega_3$ defined as follows:
\begin{align*}
    \Omega_1 &:= \{ (v, v_*) \in \mathbb{R}^3 \times \mathbb{R}^3 : |v| \geq L \}, \\
    \Omega_2 &:= \{ (v, v_*) \in \mathbb{R}^3 \times \mathbb{R}^3 : |v| \leq L, \, |v_*| \geq 2L \}, \\
    \Omega_3 &:= \{ (v, v_*) \in \mathbb{R}^3 \times \mathbb{R}^3 : |v| \leq L, \, |v_*| \leq 2L \}.
\end{align*}
We first estimate the contribution from $\Omega_1$. By Lemma \ref{K est}, we have
\begin{align} \label{K(1)}
        &\sum_{k=1}^{N} \int_{0}^{t-\delta} \int_{|v| \geq L} e^{-\frac{m_j|v|^2}{8}} e^{-\nu_{j}(v) (t - s)}  \int_{\mathbb{R}^3} |k^w_{jk}(v,v_*)| |h_k(s, X(s), v_*)| dv_* dv ds \cr
        &\leq \sup_{0 \leq s \leq t} \|\pmb{h}(s)\|_{L^\infty} \sum_{k=1}^{N}\int_0^{t-\delta}  \int_{|v| \geq L} e^{-\frac{m_j|v|^2}{8}} e^{-\nu_{j}(v) (t - s)} \int_{\mathbb{R}^3} |k^w_{jk}(v,v_*)| dv_*  dvds  \cr
        &\leq \frac{C}{L}  \sup_{0 \leq s \leq t} \|\pmb{h}(s)\|_{L^\infty}.
\end{align}
For the region $\Omega_2$, we observe that $|v-v_*| \geq L$. Using Lemma \ref{CFE} and \ref{K est}, we obtain
\begin{align} \label{K(2)}
     &\sum_{k=1}^{N} \int_{0}^{t-\delta}  \int_{\Omega_2} e^{-\frac{m_j|v|^2}{8}} e^{-\nu_{j}(v) (t - s)} |k^w_{jk}(v,v_*)| |h_k(s, X(s), v_*)| dv_* dv ds\cr
    &\leq C \sup_{0 \leq s \leq t} \|\pmb{h}(s)\|_{L^\infty} \sum_{k=1}^{N} \int_{\Omega_2} e^{-\frac{m_j|v|^2}{8}} \frac{e^{\frac{1}{64} |v-v_*|^2}}{e^{\frac{L^2}{64} }} |k^w_{jk}(v,v_*)| dv_* dv \cr
    &\leq \frac{C}{L} \sup_{0 \leq s \leq t} \|\pmb{h}(s)\|_{L^\infty} \sum_{k=1}^{N} \int_{|v| \leq L} e^{-\frac{m_j|v|^2}{8}} dv \cr
    &\leq \frac{C}{L} \sup_{0 \leq s \leq t} \|\pmb{h}(s)\|_{L^\infty}.
\end{align}
For $\Omega_3$, we use the $L^2_v$-boundedness of the kernel $k^w_{jk}$: 
\begin{align} \label{k_square}
\int_{\R^3} \left|k^w_{jk}(v,v_*)\right|^2 dv_* \leq C.
\end{align}
This property can be established by combining Lemma \ref{K est} with the argument used in the single-species case (see \cite{Guo10}). Since the extension to the multi-species setting follows analogously, we omit the details. Then, we have
		\begin{align}\label{k3}
				&\sum_{k=1}^{N} \int_{0}^{t-\delta}  \int_{\Omega_3} e^{-\frac{m_j|v|^2}{8}}e^{-\nu_{j}(v) (t - s)}  |k^w_{jk}(v,v_*)| |h_k(s, X(s), v_*)| dv_* dvds \cr
				& \leq \sum_{k=1}^{N}\int_{0}^{t-\delta}e^{-\nu_0(t - s)}\left(\int_{ |v|\leq L,\ |v_*|\leq 2L}e^{-\frac{m_j|v|^2}{4}}  \left|k^w_{jk}(v,v_*)\right|^2dv_*dv\right)^\frac{1}{2}\cr
				&\quad\times\left(	\int_{ |v|\leq L,\ |v_*|\leq 2L}e^{-\frac{m_j|v|^2}{4}}\left|h_k(s, x - (t - s)v, v_*) \right|^2dv_*dv\right)^\frac{1}{2}ds\cr
				& \leq C	\sum_{k=1}^N\int_{0}^{t-\delta}e^{-\nu_0(t - s)}\left(\int_{ |v|\leq L,\ |v_*|\leq 2L}e^{-\frac{m_j|v|^2}{4}}\left|h_k(s, x - (t - s)v, v_*) \right|^2dvdv_*\right)^\frac{1}{2}ds.
		\end{align}
		To estimate the last term, we apply the change of variables $y:=x-(t-s)v$. Then,
		\begin{align}\label{cv}
			\begin{split} 
			&\int_{ |v|\leq L,\ |v_*|\leq 2L}e^{-\frac{m_j|v|^2}{4}}\left|h_k(s, x - (t - s)v, v_*) \right|^2dvdv_* \\
			&\leq C_{L}\frac{1+(t-s)^3}{(t-s)^3}\int_{\mathbb{T}^3}\int_{ |v_*|\leq 2L}\left|h_k(s,y, v_*) \right|^2dv_*dy.
			\end{split}
		\end{align}
		Thus, we can estimate \eqref{k3} as follows:
		\begin{align}\label{111}
				&\sum_{k=1}^{N} \int_{0}^{t-\delta} \int_{\Omega_3} e^{-\frac{m_j|v|^2}{8}}e^{-\nu_{j}(v) (t - s)}  |k^w_{jk}(v,v_*)| |h_k(s, X(s), v_*)| dv_* dvds \cr				
				&\leq C_{L}\sum_{k=1}^N\int_{0}^{t-\delta}e^{-\nu_0 (t-s)} \left(\frac{1+(t-s)^3}{(t-s)^3}\right)^{1/2}\left(\int_{\T^3}\int_{|v_*|\leq 2L} \left|h_k(s, y, v_*) \right|^2dv_*dy\right)^{\frac{1}{2}}ds\cr
				&\leq C_{L,\delta} \sum_{k=1}^N\int_{0}^{t-\delta}e^{-\nu_0 (t - s)}\left(\int_{\T^3}\int_{ |v_*|\leq 2L}\left|f_k(s, y, v_*) \right|^2dv_*dy\right)^\frac{1}{2}ds.
		\end{align} 
		It follows from Lemma \ref{RE} that 
		\begin{align}\label{222}
				&\int_{\T^3}\int_{ |v_*|\leq 2L}\left|f_k(s, y, v_*) \right|^2 dv_* dy \cr 
				&=\int_{\T^3}\int_{ |v_*|\leq 2L}\left|f_k(s, y, v_*) \right|^2\left(\chi_{\left\{\left|f_k\right|\leq\sqrt{\mu_k}\right\}}+\chi_{\left\{\left|f_k\right|\geq\sqrt{\mu_k(v_*)}\right\}}\right)dv_*dy\cr
				&\leq \mathcal{E}(\pmb{F}_0)+\int_{\T^3}\int_{ |v_*|\leq 2L}\left|f_k(s, y, v_*) \right|^2\chi_{\left\{\left|f_k\right|\geq\sqrt{\mu_k}\right\}}dv_*dy\cr
				&\leq\mathcal{E}(\pmb{F}_0)+\sup_{0\leq s\leq t}\|h_k(s)\|_{L^\infty}\int_{\T^3}\int_{|v_*|\leq 2L}\mu_k(v_*)^{-1/2}\sqrt{\mu_k(v_*)}\left|f_k(s,y, v_*) \right|\chi_{\left\{\left|f_k\right|\geq\sqrt{\mu_k}\right\}}dv_*dy\cr
				&\leq C_{L}\mathcal{E}(\pmb{F}_0)(1+\sup_{0\leq s\leq t}\|\pmb{h}(s)\|_{L^\infty}),
		\end{align}
		where we used $\mu_k(v_*)^{-1/2} \leq C_L$ for $|v_*| \leq 2L$. 
		Thus, combining \eqref{K(1)}, \eqref{K(2)}, \eqref{111}, and \eqref{222}, we estimate the contribution of the operator $K$ appearing in the last line of \eqref{h intv} as follows:
		\begin{align}\label{Klast}
				 &\int_{0}^{t-\delta}\int_{\R^3}e^{-\frac{m_j|v|^2}{8}} e^{-\nu_{j}(v) (t - s)} |K_{w,j}(\pmb{h})(s,X(s),v)| dvds \cr					& \leq  \frac{C}{L}\sup_{0\leq s\leq t}\|\pmb{h}(s)\|_{L^\infty} +C_{L,\delta}\sqrt{\mathcal{E}(\pmb{F}_0)}\left( 1+\sup_{0\leq s\leq t}\|\pmb{h}(s)\|_{L^\infty}\right)^\frac{1}{2}.
		\end{align}
		To estimate the nonlinear term in the last line of \eqref{h intv}
		\begin{align*}
			&\int_{0}^{t-\delta} \int_{\R^3} e^{-\frac{m_j|v|^2}{8}} e^{-\nu_{j}(v) (t - s)} |w\Gamma_{j}(\pmb{f})(s, X(s), v)| dv ds,
		\end{align*}
		we first consider the region $\{|v| \geq L\}$. Using Corollary \ref{nonlinear est}, we have
\begin{align} \label{nonlinear(1)} 
    &\int_{0}^{t-\delta} \int_{|v| \geq L} e^{-\frac{m_j|v|^2}{8}} e^{-\nu_{j}(v) (t - s)} |w\Gamma_{j}(\pmb{f})(s, X(s), v)| dv ds \cr
    &\leq C \int_{0}^{t-\delta} e^{-\nu_0 (t-s)}  \int_{|v| \geq L} e^{-\frac{m_j|v|^2}{8}} \langle v \rangle^\gamma \| \pmb{h}(s) \|_{L^\infty}^2 dvds \cr
    &\leq \frac{C}{L} \sup_{0 \leq s \leq t} \| \pmb{h}(s) \|_{L^\infty}^2.
\end{align}
We now focus on the compact domain $\{|v| \leq L\}$. In this region, we decompose the nonlinear operator into its gain and loss components:
\begin{align*}
    &\int_{0}^{t-\delta} \int_{|v| \leq L} e^{-\frac{m_j|v|^2}{8}} e^{-\nu_{j}(v) (t - s)} |w\Gamma_{j}(\pmb{f})(s, X(s), v)| dv ds \\
    & \leq \int_{0}^{t-\delta} \int_{|v| \leq L} e^{-\frac{m_j|v|^2}{8}} e^{-\nu_{j}(v) (t - s)} \left[ |w\Gamma^+_{j}(\pmb{f})| + |w\Gamma^-_{j}(\pmb{f})| \right](s, X(s), v) dv ds.
\end{align*}
		For the gain term $w\Gamma^+_{j}$, we use Lemma \ref{gain est_new} to obtain the following estimate
		\begin{align} \label{gamma gain_split}
		&\int_{0}^{t-\delta} \int_{|v| \leq L} e^{-\frac{m_j|v|^2}{8}} e^{-\nu_{j}(v) (t - s)} |w\Gamma^+_{j}(\pmb{f})(s, X(s), v)| dv ds\cr
			&=\int_{0}^{t-\delta}\int_{|v|\leq L}e^{-\frac{m_j|v|^2}{8}} e^{-\nu_{j}(v) (t - s)} \left|\sum_{k=1}^{N} w(v) \Gamma^+(f_j, f_k)(s, x - (t - s)v, v)\right|dvds\cr
			&\leq C_{*,1}\int_{0}^{t-\delta}\int_{ |v|\leq L}e^{-\nu_j(v)(t-s)}e^{-\frac{m_j|v|^2}{8}}\left[\frac{\|h_j(s)\|_{L^{\infty}}}{(1+|v|)^{\frac{6}{5}-\gamma}}\sum_{k=1}^N\left\{\int_{\R^3}\left(1+|\eta|\right)^{-5q+10}|h_k(s,x - (t - s)v,\eta)|^5d\eta\right\}^\frac{1}{5} \right.\cr
			&\left.\qquad +\sum_{k=1}^N\frac{\|h_k(s)\|_{L^{\infty}}}{1+|v|}\left\{\int_{\R^3}\left(1+|\eta|\right)^{-2q+4}|h_j(s,x - (t - s)v,\eta)|^2d\eta\right\}^\frac{1}{2}\right]dvds\cr
			&:= R_1 + R_2.
		\end{align}
		The prior part $R_1$ can be estimated as follows:
		\begin{align} \label{R1_1} 
			R_1 &\leq C_{*,1}\sup_{0\leq s\leq t}\|h_j(s)\|_{L^{\infty}} \cr
			&\quad \times \int_{0}^{t-\delta}e^{-\nu_0(t-s)}\int_{|v|\leq L}e^{-\frac{m_j|v|^2}{8}}\sum_{k=1}^N\left\{\int_{\R^3}\left(1+|\eta|\right)^{-5q+10}|h_k(s,x - (t - s)v,\eta)|^5d\eta\right\}^\frac{1}{5} dvds.
		\end{align}
		For the above integral, we split the integration into two parts: $\{|\eta|\leq 2L\}$ and $\{|\eta|\geq2L\}$. For $\{|\eta|\geq2L\}$, we have 
		\begin{align}\label{R1_2}   
			&\int_{|v|\leq L}e^{-\frac{m_j|v|^2}{8}}\sum_{k=1}^N\left\{\int_{|\eta|\geq2L}\left(1+|\eta|\right)^{-5q+10}|h_k(s,x - (t - s)v,\eta)|^5d\eta\right\}^\frac{1}{5} dv\cr
			&\leq \sum_{k=1}^N\sup_{0\leq s\leq t}\|h_k(s)\|_{L^\infty}\left\{\int_{|\eta|\geq2L}\left(1+|\eta|\right)^{-5q+10}d\eta\right\}^\frac{1}{5}\cr
			&\leq \frac{C}{L^{1/5}} \sup_{0\leq s \leq t} \Vert \pmb{h}(s) \Vert \left(\int_{|\eta|\geq2L}(1+|\eta|)^{-5q +11} d\eta\right)^\frac{1}{5} \cr
			&\leq \frac{C_q}{L^{1/5}} \sup_{0\leq s \leq t} \Vert \pmb{h}(s) \Vert, 
		\end{align}
		where we used the condition $q>4$ in \eqref{v_weight}. 
		For $\{|\eta|\leq2L\}$, it follows from H\"{o}lder's inequality that 
		\begin{align}\label{333}
			\begin{split}
				&\int_{|v|\leq L}e^{-\frac{m_j|v|^2}{8}}\sum_{k=1}^N\left\{\int_{|\eta|\leq2L}\left(1+|\eta|\right)^{-5q+10}|h_k(s,x - (t - s)v,\eta)|^5d\eta\right\}^\frac{1}{5} dv\cr
				&\leq \sum_{k=1}^N\left(\int_{|v|\leq L}e^{-\frac{5m_j|v|^2}{32}}dv\right)^\frac{4}{5}\left(\int_{|v|\leq L,|\eta|\leq 2L}\left(1+|\eta|\right)^{-5q+10}|h_k(s,x - (t - s)v,\eta)|^5d\eta dv\right)^\frac{1}{5}\cr
				&  \leq C\sum_{k=1}^N \sup_{0\leq s \leq t} \Vert h_k(s) \Vert_{L^\infty}^{3/5}\left(\int_{|v|\leq L,|\eta|\leq 2L}\left(1+|\eta|\right)^{-5q+10}|h_k(s,x - (t - s)v,\eta)|^2 dvd\eta\right)^\frac{1}{5}\cr
				&= C \sum_{k=1}^N\sup_{0\leq s \leq t}\|h_k(s)\|^\frac{3}{5}_{L^\infty}\left(\int_{|v|\leq L,|\eta|\leq 2L}(1+|\eta|)^{-3q+10} |f_k(s,x - (t - s)v,\eta)|^2dvd\eta\right)^\frac{1}{5} \cr
				&  \leq C_L\sum_{k=1}^N\sup_{0\leq s \leq t}\|h_k(s)\|^\frac{3}{5}_{L^\infty}\left(\int_{|v|\leq L,|\eta|\leq 2L}|f_k(s,x - (t - s)v,\eta)|^2dvd\eta\right)^\frac{1}{5}.
			\end{split}
		\end{align}
		Using the change of variables $y:=x-(t-s)v$ and following the same argument as in \eqref{cv} and \eqref{222}, we get following that
		\begin{align}\label{torus f}
				\int_{|v|\leq L,|\eta|\leq 2L}|f_k(s,x - (t - s)v,\eta)|^2dvd\eta \leq C_{L}\frac{1+(t-s)^3}{(t-s)^3} \mathcal{E}(\pmb{F}_0) (1+ \sup_{0\leq s \leq t} \Vert \pmb{h}(s) \Vert_{L^\infty}). 
		\end{align}
		Thus, the term in the first line of \eqref{333} can be further estimated as
		\begin{align} \label{R1_3}
			\begin{split}
			&\int_{|v|\leq L}e^{-\frac{m_j|v|^2}{8}}\sum_{k=1}^N\left\{\int_{|\eta|\leq2L}\left(1+|\eta|\right)^{-5q+10}|h_k(s,x - (t - s)v,\eta)|^5d\eta\right\}^\frac{1}{5} dv\cr
			& \leq C_{L}\left( \frac{1+(t-s)^3}{(t-s)^3} \right)^{\frac{1}{5}} \mathcal{E}(\pmb{F}_0)^{\frac{1}{5}} \sum_{k=1}^N \sup_{0\leq s \leq t} \Vert h_k(s) \Vert_{L^\infty}^{\frac{3}{5}} (1+ \sup_{0\leq s \leq t} \Vert \pmb{h}(s) \Vert_{L^\infty})^{\frac{1}{5}} \cr
			&\leq C_{L} \left( \frac{1+(t-s)^3}{(t-s)^3} \right)^{\frac{1}{5}} \mathcal{E}(\pmb{F}_0)^{\frac{1}{5}}   \left[ \sup_{0\leq s \leq t}\Vert \pmb{h}(s)\Vert_{L^\infty}^{\frac{3}{5}} \times (1+ \sup_{0\leq s \leq t}\Vert \pmb{h}(s) \Vert_{L^\infty})^{\frac{1}{5}} \right].
		\end{split}
		\end{align}
		Hence, combining \eqref{R1_1},\eqref{R1_2}, and \eqref{R1_3} yields the estimate for $R_1$: 
		\begin{align} \label{R1}
					R_1 \leq \frac{C_{q,\gamma}}{L^{1/5}}  \sup_{0\leq s \leq t} \Vert \pmb{h}(s) \Vert_{L^\infty}^2 +C_{q,\gamma,L,\delta} \mathcal{E}(\pmb{F}_0)^{\frac{1}{5}}   \left[ \sup_{0\leq s \leq t}\Vert \pmb{h}(s)\Vert_{L^\infty}^{\frac{3}{5}} \times (1+ \sup_{0\leq s \leq t}\Vert \pmb{h}(s) \Vert_{L^\infty})^{\frac{1}{5}} \right].
		\end{align}
		For $R_2$, we obtain the following estimate by using a similar argument as in the estimate of $R_1$
		\begin{align} \label{R2}
			R_2&=C_{*,1} \int_0^{t-\delta} \int_{|v|\leq L} e^{-\nu_j(v) (t-s)} e^{-\frac{m_j |v|^2}{8}} \cr
			&\quad \times \sum_{k=1}^{N}\frac{\|h_k(s)\|_{L^{\infty}}}{1+|v|}\left\{\int_{\R^3}\left(1+|\eta|\right)^{-2q+4}|h_j(s,x - (t - s)v,\eta)|^2d\eta\right\}^\frac{1}{2}dvds\cr
			& \leq \frac{C_{q,\gamma}}{L^{1/2}} \sup_{0\leq s \leq t} \Vert \pmb{h}(s) \Vert_{L^\infty}^2 +C_{q,\gamma,L,\delta} \sqrt{\mathcal{E}(\pmb{F}_0)}\left(1+ \sup_{0\leq s \leq t} \Vert \pmb{h}(s) \Vert_{L^\infty}\right)^{\frac{1}{2}}\sup_{0\leq s \leq t} \Vert \pmb{h}(s) \Vert_{L^\infty},
		\end{align} 
		where we used the condition $q>4$ in \eqref{v_weight}. 
		Hence, for the gain term $w\Gamma^+_j$, it follows from \eqref{gamma gain_split},\eqref{R1}, and \eqref{R2} that 
		\begin{align} \label{total_gain_sum}
    &\int_0^{t-\delta} \int_{|v| \leq L} e^{-\frac{m_j|v|^2}{8}} e^{-\nu_j(v)(t-s)} |w\Gamma^+_{j}(\pmb{f})(s,X(s),v)| dv ds \cr
    &\leq \frac{C_{q,\gamma}}{L^{1/5}}  \sup_{0\leq s \leq t} \Vert \pmb{h}(s) \Vert_{L^\infty}^2 + C_{q,\gamma,L,\delta} \sqrt{\mathcal{E}(\pmb{F}_0)}\left(1+ \sup_{0\leq s \leq t} \Vert \pmb{h}(s) \Vert_{L^\infty}\right)^{\frac{1}{2}}\sup_{0\leq s \leq t} \Vert \pmb{h}(s) \Vert_{L^\infty} \cr
    &\quad + C_{q,\gamma,L,\delta} \mathcal{E}(\pmb{F}_0)^{\frac{1}{5}}   \left[ \sup_{0\leq s \leq t}\Vert \pmb{h}(s)\Vert_{L^\infty}^{\frac{3}{5}} \times (1+ \sup_{0\leq s \leq t}\Vert \pmb{h}(s) \Vert_{L^\infty})^{\frac{1}{5}}\right]. 
\end{align}
		Lastly, we consider the loss part of nonlinear term $w\Gamma^-_j$. We have 
		\begin{align*}
			&\sum_{k=1}^N|w(v)\Gamma^-(f_j,f_k)(s,X(s),v)|\\
			&\leq \Vert h_j(s) \Vert_{L^\infty} \sum_{k=1}^N \int_{\R^3} |v-v_*|^\gamma \sqrt{\mu_k(v_*)} |f_k(s,X(s),v_*)| dv_* \\
			&\leq \Vert h_j(s) \Vert_{L^\infty} \sum_{k=1}^N \left(\int_{\R^3} |v-v_*|^{2\gamma} \mu_k(v_*) dv_*\right)^{1/2} \left(\int_{\R^3} |f_k(s,X(s),v_*)|^2 dv_*\right)^{1/2} \\
			&\leq C \langle v \rangle^\gamma \sum_{k=1}^N \left(\int_{\R^3} |f_k(s,X(s),v_*)|^2 dv_*\right)^{1/2},
		\end{align*}
		where the second inequality comes from H\"{o}lder's inequality. Hence, one obtains that 
			\begin{align} \label{total_loss_sum}
			&\int_0^{t-\delta} \int_{|v| \leq L} e^{-\frac{m_j|v|^2}{8}} e^{-\nu_j(v)(t-s)} |w\Gamma^-_{j}(\pmb{f})(s,X(s),v)| dv ds \cr
				&=\int_{0}^{t-\delta}\int_{|v|\leq L}e^{-\frac{m_j|v|^2}{8}} e^{-\nu_{j}(v) (t - s)} \left|\sum_{k=1}^{N} w(v) \Gamma^-(f_j, f_k)(s, X(s), v)\right|dvds\cr
				& \le C\int_{0}^{t-\delta}\int_{|v|\leq L}e^{-\frac{m_j|v|^2}{8}} e^{-\nu_{j}(v) (t - s)} \langle v \rangle^\gamma \|h_j(s)\|_{L^\infty}\sum_{k=1}^{N}\left( \int_{\R^3} |f_k(s,X(s),v_*)|^2 	dv_* \right)^{1/2}dvds\cr
				&\leq C \sup_{0\leq s \leq t} \Vert h_j(s) \Vert_{L^\infty} \cr
				&\quad \times \sum_{k=1}^N \int_0^{t-\delta} e^{-\nu_0(t-s)}\left(\int_{\R^3} e^{-\frac{m_j|v|^2}{4}} \langle v \rangle^{2\gamma} dv \right)^{1/2}\left( \int_{\{|v|\leq L\}\times \R^3} |f_k(s,X(s),v_*)|^2 dv_* dv \right)^{1/2}ds   \cr
				& \leq C\sup_{0\leq s \leq t} \Vert h_j(s) \Vert_{L^\infty} \sum_{k=1}^N \int_0^{t-\delta} e^{-\nu_0(t-s)} \cr
				&\quad \times \left[\left(\int_{|v|\leq L, |v_*| \geq 2L} |f_k(s,X(s),v_*)|^2 dv_*dv\right)^{1/2}+\left(\int_{|v|\leq L, |v_*| \leq 2L} |f_k(s,X(s),v_*)|^2 dv_*dv\right)^{1/2}  \right]ds\cr
				&\leq \frac{C_q}{L}\sup_{0\leq s \leq t} \Vert \pmb{h}(s) \Vert_{L^\infty}^2 + C_{q,L,\delta} \sqrt{\mathcal{E}(\pmb{F}_0)}   (1+\sup_{0\leq s \leq t} \Vert \pmb{h}(s) \Vert_{L^\infty})^{1/2}\sup_{0\leq s \leq t} \Vert \pmb{h}(s) \Vert_{L^\infty}.
			\end{align}
			where the last inequality comes from \eqref{torus f}. 
			Therefore, we obtain the following estimate for the nonlinear term in the last line of \eqref{h intv} by combining \eqref{nonlinear(1)}, \eqref{total_gain_sum}, and \eqref{total_loss_sum}:
	\begin{align} \label{total_nonlinear}
		&\int_0^{t-\delta} \int_{\R^3} e^{-\frac{m_j|v|^2}{8}} e^{-\nu_j(v)(t-s)} |w\Gamma_{j}(\pmb{f})(s,X(s),v)| dv ds \cr
		&\leq \frac{C_{q,\gamma}}{L^{1/5}}  \sup_{0\leq s \leq t} \Vert \pmb{h}(s) \Vert_{L^\infty}^2 + C_{q,\gamma,L,\delta} \sqrt{\mathcal{E}(\pmb{F}_0)}(1+ \sup_{0\leq s \leq t} \Vert \pmb{h}(s) \Vert_{L^\infty})^{1/2}\sup_{0\leq s \leq t} \Vert \pmb{h}(s) \Vert_{L^\infty} \cr
    &\quad + C_{q,\gamma,L,\delta} \mathcal{E}(\pmb{F}_0)^{\frac{1}{5}}   \left[ \sup_{0\leq s \leq t}\Vert \pmb{h}(s)\Vert_{L^\infty}^{\frac{3}{5}} \times (1+ \sup_{0\leq s \leq t}\Vert \pmb{h}(s) \Vert_{L^\infty})^{\frac{1}{5}}\right]. 
	\end{align}	
	By combining \eqref{rf1}, \eqref{rf2}, \eqref{Klast}, and \eqref{total_nonlinear}, we obtain the following bound for the velocity-weighted integral of the perturbation $h_j$:
	\begin{align*}
   & \int_{\mathbb{R}^3} e^{-\frac{m_j|v|^2}{8}} |{h}_j(t,x,v)| dv \\
    &\leq C e^{-\nu_0 t} \|\pmb{h}_{0}\|_{L^\infty} +C_{q,\gamma}\left(\delta+\frac{1}{L^{1/5}}\right) \left[\sup_{0\leq s \leq t} \Vert \pmb{h}(s)\Vert_{L^\infty}+\sup_{0\leq s \leq t} \Vert \pmb{h}(s)\Vert_{L^\infty}^2\right] \cr 
      &\quad +C_{q,\gamma,L,\delta}\sqrt{\mathcal{E}(\pmb{F}_0)}\left( 1+\sup_{0\leq s\leq t}\|\pmb{h}(s)\|_{L^\infty}\right)^\frac{1}{2}\left[1+ \sup_{0\leq s \leq t} \Vert \pmb{h}(s)\Vert_{L^\infty} \right] \\
		    &\quad + C_{q,\gamma,L,\delta} \mathcal{E}(\pmb{F}_0)^{\frac{1}{5}}   \left[ \sup_{0\leq s \leq t}\Vert \pmb{h}(s)\Vert_{L^\infty}^{\frac{3}{5}} \times (1+ \sup_{0\leq s \leq t}\Vert \pmb{h}(s) \Vert_{L^\infty})^{\frac{1}{5}}\right] \\
    &:= C e^{-\nu_0 t} \|\pmb{h}_{0}\|_{L^\infty}  + E.
    \end{align*}
    To achieve the bound \eqref{R}, we first choose $L>0$ sufficiently large and $\delta>0$ sufficiently small so that
\[
C_{q,\gamma}\left(\delta+\frac{1}{L^{1/5}}\right) \left[\sup_{0\leq s \leq t} \Vert \pmb{h}(s)\Vert_{L^\infty}+\sup_{0\leq s \leq t} \Vert \pmb{h}(s)\Vert_{L^\infty}^2\right] \leq \frac{1}{8C_\nu}.
\]
Next, we choose $\varepsilon_1=\varepsilon_1(\bar M,L,\delta)>0$ sufficiently small so that, whenever
\[
\mathcal{E}(\pmb{F}_0)\le \varepsilon_1,
\]
it follows that
\begin{align*} 
    E \leq \frac{1}{4C_\nu}.
\end{align*}
Since $\|\pmb{h}_0\|_{L^\infty}$ is not assumed to be small, we define
\begin{align} \label{tilde_t}
\tilde t:=\frac{1}{\nu_0}\log\bigl(4C_{\nu,1}\|\pmb{h}_0\|_{L^\infty}\bigr),
\end{align}
where $C_{\nu,1}:= C C_{\nu}$. 
Then for every $t\ge \tilde t$,
\[
C e^{-\nu_0 t}\|\pmb{h}_0\|_{L^\infty}\le \frac{1}{4C_\nu}.
\]
For all $j \in \{1, \cdots, N \}$, we obtain the desired result 
\begin{align*} 
    \int_{\mathbb{R}^3} e^{-\frac{m_j|v|^2}{8}} |{h}_j(t,x,v)| dv \leq \frac{1}{4C_{\nu}} +\frac{1}{4C_{\nu}}  = \frac{1}{2C_\nu}, \quad \forall t \geq \tilde{t}. 
\end{align*} 
	\end{proof}
		\subsection{$L^\infty$ estimate} 
		Under large-amplitude perturbations, $\mathcal{R}_i(f)$ has a positive lower bound for $t \geq \tilde{t}$ for each $i \in \{1,\cdots,N\}$. Therefore, in this subsection, we consider the following reformulated Boltzmann equation \eqref{multi pert} for each $i\in \{1,\cdots, N\}$.
				Multiplying the equation by the velocity weight function $w(v)$ and denoting $h_i=wf_i$, we can represent $h_i$ via Duhamel's principle as follows:
		\begin{align} \label{mild_form}
			 h_i(t,x,v) &= e^{-\int_0^t \mathcal{R}_i(\pmb{f})(s,x-(t-s)v,v)ds}h_{0,j}(x-tv,v) \nonumber \\
			 &\quad + \int_0^t e^{-\int_s^t \mathcal{R}_i(\pmb{f}) (\tau,x-(t-\tau)v,v)d\tau}\left [K_{w,i} (\pmb{h}) +w\Gamma^+_{i}(\pmb{f})\right ](s,x-(t-s)v,v)ds,
		\end{align}
		where the operator ${K}_{w,i}$ is given by 
		\begin{align*}
			K_{w,i} (\pmb{h})= \sum_{j=1}^N\int_{\R^3} k_{ij}(v,v_*) \frac{w(v)}{w(v_*)} h_j(v_*) dv_*= \sum_{j=1}^N \int_{\R^3} k^w_{ij}(v,v_*)h_j(v_*)dv_*.
		\end{align*}
		 If we define the solution operator by
		\begin{align}\label{G}
		 	G_{v,i}(t,s) := e^{-\int_s^t \mathcal{R}_i(\pmb{f})(\tau,x-(t-\tau)v,v)d\tau},
		 \end{align}
		 then, under the assumption of Proposition \ref{Rf est}, the solution operator $G_{v,i}$ satisfies the following bound:
		\begin{align} \label{Gv}
			G_{v,i}(t,s) &= G_{v,i}(t,s) \chi_{\{t \leq \tilde{t}\}} +G_{v,i}(t,s)\chi_{\{s \leq \tilde{t} \leq t\}} + G_{v,i}(t,s) \chi_{\{\tilde{t} \leq s \leq t \}} \cr
			&\leq e^{\frac{\nu_0}{2} \tilde{t}}e^{-\frac{\nu_0}{2} (t-s)} \chi_{\{t \leq \tilde{t}\}} +\exp \left\{-\int_{\tilde{t}}^t \frac{\nu_i(v)}{2}d\tau \right\}\chi_{\{s\leq \tilde{t}\leq t \}}+ \exp \left\{-\int_{s}^t \frac{\nu_i(v)}{2}d\tau \right\}\chi_{\{\tilde{t}\leq s \leq t \}}   \cr
			&\leq e^{\frac{\nu_0}{2} \tilde{t}}e^{-\frac{\nu_0}{2} (t-s)} \chi_{\{t \leq \tilde{t}\}} +e^{-\frac{\nu_0}{2} (t-\tilde{t})} \chi_{\{ s\leq \tilde{t} \leq t \}}   + e^{-\frac{\nu_0}{2} (t-s)}  \chi_{\{ s\leq \tilde{t} \leq t \}}  \cr
			&\leq e^{\lambda_0 \tilde{t}} e^{-\lambda_0 (t-s)},
		\end{align}
		where $\lambda_0 := \frac{\nu_0}{2}$ and $\nu_0$ is defined in Lemma \ref{CFE}. Based on the Duhamel representation for the weighted perturbation $h_i$, we now aim to establish a uniform $L^\infty$ bound. The main difficulty in the large-amplitude regime is that the damping effect of the collision frequency does not immediately dominate. However, as shown in \eqref{Gv}, $G_{v,i}$ decays exponentially for $t \ge \tilde{t}$.		
		
		By integrating the terms $K_{w,i} (\pmb{h})$ and $w\Gamma^+_{i}(\pmb{f})$ along the characteristic trajectories and utilizing the pointwise estimates established in the previous lemmas, we close the $L^\infty$ estimates. The following theorem provides a precise bound for the weighted perturbation, showing that the $L^\infty$ norm of the solution can be controlled by the initial data and the contributions of the linear and nonlinear terms.
		\begin{theorem}\label{Linfty}
			Let $h_i=h_i(t,x,v)$ be a solution to 
			\begin{align*}
				\p_t h_i + v\cdot \nabla_x h_i + \mathcal{R}_i(\pmb{f}) h_i = {K}_{w,i} (\pmb{h}) + w\Gamma^+_{i}(\pmb{f}). 
			\end{align*}
			Let $T>0$ and $t\in [0,T]$. Under the a priori assumption 
			\begin{align*}
			\sup_{0\leq t \leq T} \Vert \pmb{h}(t) \Vert_{L^\infty} \leq \bar{M}< \infty, 
			\end{align*}
			 there exists a constant $C_{*,2}=C_{*,2}(q,\gamma) > 0$ such that 
			\begin{align} \label{h_final_bound}
				\|\pmb{h}(t)\|_{L^\infty} \leq C_{*,2} e^{2\lambda_0 \tilde{t}}e^{-\frac{\lambda_0}{2} t} \| \pmb{h}_{0} \|_{L^\infty}\left(1+\int_{0}^{t}\|\pmb{h}(s)\|_{L^\infty}ds\right)+D, 
			\end{align}
			where $\lambda_0$ is defined in \eqref{Gv}, and $D$ denotes a remainder term to be specified in the proof.		\end{theorem}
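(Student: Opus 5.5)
We start from the mild formulation \eqref{mild_form} and the bound \eqref{Gv} on the solution operator $G_{v,i}(t,s)\le e^{\lambda_0\tilde t}e^{-\lambda_0(t-s)}$. Fix $i$ and $(t,x,v)$. The initial-data term is bounded by $e^{\lambda_0\tilde t}e^{-\lambda_0 t}\|\pmb h_0\|_{L^\infty}$. The remaining terms are the $K_{w,i}$ integral and the $w\Gamma^+_i$ integral along $X(s)=x-(t-s)v$.

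\textbf{The $K$-term.} We substitute the mild formulation \eqref{mild_form} once more for $h_j(s,X(s),v_*)$. This is the double Duhamel iteration, carried out as in Lemma \ref{small L infty} but with $G_{v,i}$ in place of $e^{-\nu_i(t-s)}$.
\begin{itemize}
\item The initial datum reappears, giving a contribution $\lesssim e^{2\lambda_0\tilde t}t e^{-\lambda_0 t}\|\pmb h_0\|_{L^\infty}\lesssim e^{2\lambda_0\tilde t}e^{-\frac{\lambda_0}{2}t}\|\pmb h_0\|_{L^\infty}$.
\item The inner $K_{w,j}$ contribution is split with $M,\delta$ exactly as in Lemma \ref{small L infty}. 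The large-velocity regions $\{|v_*|\ge M\}$, $\{|\eta|\ge 2M\}$ and the short-time piece $s'\in[s-\delta,s]$ give $C(\frac1M+\delta)e^{2\lambda_0\tilde t}\sup_s\|\pmb h(s)\|_{L^\infty}$.
\item The compact part with $s-s'\ge\delta$ is handled by the change of variables $y=\hat X(s')$. As in \eqref{222}, this gives $C_{M,\delta}\sqrt{\mathcal E(\pmb F_0)}(1+\bar M)^{1/2}$.
\item The inner $w\Gamma^+_j$ contribution inside the $K$-iteration is treated by Lemma \ref{gain est_new}, again exactly as in \eqref{R1}--\eqref{R2}.
\end{itemize}

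\textbf{The $\Gamma^+$-term.} We apply Lemma \ref{gain est_new} pointwise. The weights $(1+|v|)^{-1}$ and $(1+|v|)^{-(6/5-\gamma)}$ do not by themselves make the time integral of $\|\pmb h\|^2_{L^\infty}$ small. So we split the $\eta$-integrals into $|\eta|\ge 2L$ and $|\eta|\le 2L$, and use $\|h\|^5\le\|h\|_{L^\infty}^3|h|^2$ as in \eqref{333}.
\begin{itemize}
\item The tail $|\eta|\ge 2L$ gives $L^{-1/5}$-small multiples of $\|\pmb h(s)\|_{L^\infty}^2$. These are integrated against $G_{v,i}$ and absorbed into $D$ via the a priori bound $\bar M$.
\item The bulk $|\eta|\le 2L$ is where the main obstacle lies. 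Here we need $\int|f_k(s,X(s),\eta)|^2d\eta$ at a fixed spatial point, while Lemma \ref{RE} controls only $x$-integrated norms.
\end{itemize}

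\textbf{Main obstacle and remedy.} The plan for the bulk term is to expand $h_k(s,X(s),\eta)$ once more by Duhamel, backward from time $s$ along velocity $\eta$.
\begin{itemize}
\item The initial-data part is bounded by $e^{\lambda_0\tilde t}e^{-\lambda_0 s}\|\pmb h_0\|_{L^\infty}$. It multiplies $\|\pmb h(s)\|_{L^\infty}$, and after the $s$-integral this produces the factor $e^{2\lambda_0\tilde t}e^{-\frac{\lambda_0}{2}t}\|\pmb h_0\|_{L^\infty}\int_0^t\|\pmb h(s)\|_{L^\infty}ds$ in \eqref{h_final_bound}.
\item The remaining parts are $K$ and $\Gamma^+$ integrals. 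For them, the time gap $s-s'\ge\delta$ allows the change of variables $\eta\mapsto X(s)-(s-s')\eta$, so Lemma \ref{RE} applies. This yields contributions of size $C_{L,\delta}\,\mathcal E(\pmb F_0)^{1/5}\,\mathcal P(\bar M)$ and $C_{L,\delta}\,\mathcal E(\pmb F_0)^{1/2}\,\mathcal P(\bar M)$, where $\mathcal P$ is a polynomial.
\end{itemize}
Collecting terms gives \eqref{h_final_bound}, with
\[
D:=C\Big(\delta+L^{-1/5}+M^{-1}\Big)e^{2\lambda_0\tilde t}\mathcal P(\bar M)+C_{L,M,\delta}\,e^{2\lambda_0\tilde t}\mathcal P(\bar M)\,\mathcal E(\pmb F_0)^{1/5}.
\]
The delicate point is bookkeeping the exponents so that only one factor $e^{2\lambda_0\tilde t}$ and the decay $e^{-\lambda_0 t/2}$ survive. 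We obtain this by using $e^{-\lambda_0(t-s)}e^{-\lambda_0 s}\le e^{-\frac{\lambda_0}{2}t}$ together with the estimate $\int_0^t e^{-\lambda_0(t-s)}ds\le\lambda_0^{-1}$.
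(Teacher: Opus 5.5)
Your proposal follows essentially the same route as the paper's proof. It uses a double Duhamel iteration for the $K_{w,i}$ term. For $w\Gamma^+_i$ it applies the pointwise bound of Lemma \ref{gain est_new}, then expands $h_j(s,X(s),\cdot)$ once more by Duhamel, and the initial-data piece of that expansion produces the factor $e^{-\lambda_0 t}\|\pmb h_0\|_{L^\infty}\int_0^t\|\pmb h(s)\|_{L^\infty}ds$. The long-time part is handled by the change of variables along characteristics together with Lemma \ref{RE}.

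Two small bookkeeping points:
\begin{itemize}
\item You must carry out that last Duhamel expansion inside the $L^5_\eta$ (resp.\ $L^2_\eta$) norm, via Minkowski, before using $|h|^5\le\|h\|_{L^\infty}^3|h|^2$. Otherwise the initial-data piece enters only through $\|\pmb h_0\|_{L^\infty}^{2/5}$, and the bound no longer has the form \eqref{h_final_bound}. The paper does it in this order.
\item The nested gain term gives $\mathcal E(\pmb F_0)^{2/25}$, as in the paper, not $\mathcal E(\pmb F_0)^{1/5}$. This is harmless, since any positive power suffices.
\end{itemize}
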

		\begin{proof}
		Fix $(t,x,v) \in [0,T]\times\T^3\times\R^3$. Using the mild formulation \eqref{mild_form} of $h_i$, we have 
		\begin{align} \label{split_h}
			|h_i(t,x,v)| &\leq G_{v,i}(t,0) |h_{0,i}(x-vt,v)| \cr
			&\quad + \int_0^t G_{v,i}(t,s) |{K}_{w,i}(\pmb{h})(s,x-(t-s)v,v)|ds \cr 
			&\quad + \int_0^t G_{v,i}(t,s) |w\Gamma^+_{i}(\pmb{f})(s,x-(t-s)v,v)| ds \cr 
			&:= I + II + III. 
		\end{align}
		where $G_{v,i}$ is defined in \eqref{G}. For $I$ in \eqref{split_h}, we use \eqref{Gv} to obtain
		\begin{align} \label{I_final} 
			I \leq e^{\lambda_0 \tilde{t}}e^{-\lambda_0 t} \Vert \pmb{h}_{0} \Vert_{L^\infty} .
		\end{align} 
		For the term $II$ in \eqref{split_h}, we apply Duhamel's principle once more to obtain
		\begin{align}\label{II}
			II &\leq \sum_{j=1}^N \int_0^t G_{v,i}(t,s) \int_{\R^3} |k^w_{ij}(v,v_*) h_j(s,x-(t-s)v,v_*)|dv_* ds \nonumber \\
			&\leq  \sum_{j=1}^N \int_0^t G_{v,i}(t,s) \int_{\R^3} |k^w_{ij}(v,v_*)| G_{v_*,j}(s,0) dv_* ds \Vert h_{0,j}\Vert_{L^\infty} \nonumber \\
			&\quad +  \sum_{j=1}^N \int_0^t G_{v,i}(t,s) \int_{\R^3}  |k^w_{ij}(v,v_*)|  \int_0^s G_{v_*,j} (s,s') |K_{w,j}(\pmb{h}) (s',\hat X(s'),v_*)| ds' dv_* ds \nonumber \\
			&\quad + \sum_{j=1}^N \int_0^t G_{v,i}(t,s) \int_{\R^3}  |k^w_{ij}(v,v_*)|  \int_0^s G_{v_*,j} (s,s') |w\Gamma^+_{j}(\pmb{f}) (s',\hat X(s'),v_*)| ds' dv_* ds \nonumber \\
			&\leq e^{2\lambda_0 \tilde{t}} e^{-\frac{\lambda_0}{2}t} \Vert \pmb{h}_{0}\Vert_{L^\infty} \nonumber \\
			&\quad + e^{2\lambda_0 \tilde{t}}\sum_{j,\ell=1}^N \int_0^t e^{-\lambda_0 (t-s)} \int_{\R^3}  |k^w_{ij}(v,v_*)| \int_0^s e^{-\lambda_0 (s-s')} \int_{\R^3} |k^w_{j\ell} (v_*,\eta)h_{\ell}(s',\hat X(s'),\eta)| d\eta ds'dv_*ds  \nonumber \\
			&\quad + C_{*,1}e^{2\lambda_0 \tilde{t}}\sum_{j,\ell=1}^N \int_0^t e^{-\lambda_0 (t-s)} \int_{\R^3}  |k^w_{ij}(v,v_*)| \int_0^s e^{-\lambda_0 (s-s')} \frac{\Vert h_\ell(s') \Vert_{L^\infty}}{1+|v_*|} \nonumber \\
			&\quad \quad \times \left(\int_{\R^3} (1+|\eta|)^{-2q+4} |h_{j}(s',\hat X(s'),\eta)|^2d\eta\right)^{1/2} ds'dv_* ds \nonumber \\
			&\quad + C_{*,1}e^{2\lambda_0 \tilde{t}}\sum_{j,\ell=1}^N \int_0^t e^{-\lambda_0 (t-s)} \int_{\R^3}  |k^w_{ij}(v,v_*)| \int_0^s e^{-\lambda_0 (s-s')} \frac{\Vert h_j(s')\Vert_{L^\infty}}{(1+|v_*|)^{\frac{6}{5}-\gamma}}  \nonumber\\
			&\quad \quad \times \left( \int_{\R^3} (1+|\eta|)^{-5q+10} |h_{\ell}(s',\hat X(s'),\eta)|^5 d\eta \right)^{1/5}ds' dv_* ds\nonumber\\
			&:= II_1 + II_2 + II_3 + II_4,
		\end{align}
		where we have used Lemma \ref{K est}, Lemma \ref{gain est_new}, and \eqref{Gv}. Here, we denote 
		\begin{align*}
			\hat X(s') := x-(t-s) v- (s-s')v_*.
		\end{align*}
		Let us now estimate the term $II_2$. Let $M\geq 1$ be a large constant to be chosen later. We split the velocity domain into $\{|v_*| \geq M \}$ and $\{|v_*| \leq M \}$, and apply Lemma \ref{K est} to obtain
		\begin{align*}
			&II_2 \\
			&=  e^{2\lambda_0 \tilde{t}}\sum_{j,\ell=1}^N \int_0^t e^{-\lambda_0 (t-s)} \int_{|v_*| \geq M }  |k^w_{ij}(v,v_*)| \int_0^s e^{-\lambda_0 (s-s')} \int_{\R^3} |k^w_{j\ell} (v_*,\eta)h_{\ell}(s',\hat X(s'),\eta)| d\eta ds'dv_*ds\\
			&\quad +  e^{2\lambda_0 \tilde{t}}\sum_{j,\ell=1}^N \int_0^t e^{-\lambda_0 (t-s)} \int_{|v_*| \leq M }  |k^w_{ij}(v,v_*)| \int_0^s e^{-\lambda_0 (s-s')} \int_{\R^3} |k^w_{j\ell} (v_*,\eta)h_{\ell}(s',\hat X(s'),\eta)| d\eta ds'dv_*ds\\
			& \leq C e^{2\lambda_0 \tilde{t}} \sup_{0\leq s \leq t} \Vert \pmb{h}(s) \Vert_{L^\infty} \sum_{j=1}^N \int_0^t e^{-\lambda_0 (t-s)} \int_{|v_*|\geq M} \frac{ |k^w_{ij}(v,v_*)| }{1+|v_*|}dv_* ds \\
			&\quad + e^{2\lambda_0 \tilde{t}}\sum_{j,\ell=1}^N \int_0^t e^{-\lambda_0 (t-s)} \int_{|v_*| \leq M }  |k^w_{ij}(v,v_*)| \int_0^s e^{-\lambda_0 (s-s')} \int_{\R^3} |k^w_{j\ell} (v_*,\eta)h_{\ell}(s',\hat X(s'),\eta)| d\eta ds'dv_*ds\\
			&\leq \frac{C}{M} e^{2\lambda_0 \tilde{t}} \sup_{0\leq s \leq t} \Vert \pmb{h}(s) \Vert_{L^\infty} \\
			& \quad + e^{2\lambda_0 \tilde{t}}\sum_{j,\ell=1}^N \int_0^t e^{-\lambda_0 (t-s)} \int_{|v_*| \leq M }  |k^w_{ij}(v,v_*)| \int_0^s e^{-\lambda_0 (s-s')} \int_{\R^3} |k^w_{j\ell} (v_*,\eta)h_{\ell}(s',\hat X(s'),\eta)| d\eta ds'dv_*ds.
		\end{align*}
		To further estimate $II_2$, we split the $\eta$-domain into $\{|\eta|\ge 2M\}$ and $\{|\eta|\le 2M\}$:
		\begin{align*}
			&\int_0^t e^{-\lambda_0 (t-s)} \int_{|v_*| \leq M }  |k^w_{ij}(v,v_*)| \int_0^s e^{-\lambda_0 (s-s')} \int_{\R^3} |k^w_{j\ell} (v_*,\eta)h_{\ell}(s',\hat X(s'),\eta)| d\eta ds'dv_*ds \\
			=&\int_0^t e^{-\lambda_0 (t-s)} \int_{|v_*| \leq M }  |k^w_{ij}(v,v_*)| \int_0^s e^{-\lambda_0 (s-s')} \int_{|\eta| \geq 2M} |k^w_{j\ell} (v_*,\eta)h_{\ell}(s',\hat X(s'),\eta)| d\eta ds'dv_*ds\\
			&\quad + \int_0^t e^{-\lambda_0 (t-s)} \int_{|v_*| \leq M }  |k^w_{ij}(v,v_*)| \int_0^s e^{-\lambda_0 (s-s')} \int_{|\eta| \leq 2M} |k^w_{j\ell} (v_*,\eta)h_{\ell}(s',\hat X(s'),\eta)| d\eta ds'dv_*ds\\
			\leq& \sup_{0\leq s \leq t} \Vert h_{\ell} (s) \Vert_{L^\infty}\\
			&\times\bigg( \int_0^t e^{-\lambda_0 (t-s)} \int_{|v_*| \leq M }  |k^w_{ij}(v,v_*)| \int_0^s e^{-\lambda_0 (s-s')} \int_{|\eta| \geq 2M} |k^w_{j\ell} (v_*,\eta)| e^{\frac{|v_*-\eta|^2}{32}} e^{-\frac{M^2}{32}}d\eta ds' dv_* ds\bigg)\\
			&\quad +\int_0^t e^{-\lambda_0 (t-s)} \int_{|v_*| \leq M }  |k^w_{ij}(v,v_*)| \int_{s-\delta}^{s} e^{-\lambda_0 (s-s')} \int_{|\eta| \leq 2M} |k^w_{j\ell} (v_*,\eta)h_{\ell}(s',\hat X(s'),\eta)| d\eta ds'dv_*ds\\
			&\quad +\int_0^t e^{-\lambda_0 (t-s)} \int_{|v_*| \leq M }  |k^w_{ij}(v,v_*)| \int_{0}^{s-\delta} e^{-\lambda_0 (s-s')} \int_{|\eta| \leq 2M} |k^w_{j\ell} (v_*,\eta)h_{\ell}(s',\hat X(s'),\eta)| d\eta ds'dv_*ds\\
			&\leq C\left(\frac{1}{M}+\delta\right) \sup_{0\leq s \leq t} \Vert \pmb{h} (s) \Vert_{L^\infty}\\ 
			&\quad + \int_0^t e^{-\lambda_0(t-s)} \int_0^{s-\delta} e^{-\lambda_0(s-s')} \left(\int_{|v_*|\leq M, |\eta| \leq 2M}|h_{\ell}(s',\hat X(s'),\eta)|^2 d\eta dv_*\right)^{1/2}ds'ds \\
			&\leq C\left(\frac{1}{M}+\delta\right) \sup_{0\leq s \leq t} \Vert \pmb{h} (s) \Vert_{L^\infty}  + C_{M,\delta}\sqrt{\mathcal{E}(\pmb{F}_0)}\left(1+\sup_{0\leq s\leq t}\|\pmb{h}(s)\|_{L^\infty}\right)^{1/2},
		\end{align*} 
		where we use Lemma \ref{K est} and the last inequality comes from \eqref{111} and \eqref{222} in the proof of Proposition \ref{Rf est}. Therefore, we derive the estimate for the term $II_2$ in \eqref{II} as follows: 
		\begin{align} \label{II2}
			II_2 &\leq C_{\tilde{t}} \left(\frac{1}{M}+\delta\right) \sup_{0\leq s \leq t} \Vert \pmb{h} (s) \Vert_{L^\infty}  + C_{M,\delta,\tilde{t}} \sqrt{\mathcal{E}(\pmb{F}_0)}\left(1+\sup_{0\leq s\leq t}\|\pmb{h}(s)\|_{L^\infty}\right)^{1/2}.
		\end{align}
		Similarly, for the term $II_3$ in \eqref{II}, we split the velocity domain into $\{|v_*|\le M\}$ and $\{|v_*|\ge M\}$.
		\begin{align*}
			&\int_0^t e^{-\lambda_0 (t-s)} \int_{\R^3}  |k^w_{ij}(v,v_*)| \\
			&\quad \times \int_0^s e^{-\lambda_0 (s-s')} \frac{\Vert h_\ell(s') \Vert_{L^\infty}}{1+|v_*|}\left(\int_{\R^3} (1+|\eta|)^{-2q+4} |h_{j}(s',\hat X(s'),\eta)|^2d\eta\right)^{1/2} ds'dv_* ds\\
			 &= \int_0^t e^{-\lambda_0 (t-s)} \int_{|v_*| \geq M}  |k^w_{ij}(v,v_*)| \\
			 &\qquad \times \int_0^s e^{-\lambda_0 (s-s')} \frac{\Vert h_\ell(s') \Vert_{L^\infty}}{1+|v_*|}\left(\int_{\R^3} (1+|\eta|)^{-2q+4} |h_{j}(s',\hat X(s'),\eta)|^2d\eta\right)^{1/2} ds'dv_* ds\\
			&\quad +\int_0^t e^{-\lambda_0 (t-s)} \int_{|v_*|\leq M}  |k^w_{ij}(v,v_*)| \\
			&\qquad \times \int_0^s e^{-\lambda_0 (s-s')} \frac{\Vert h_\ell(s') \Vert_{L^\infty}}{1+|v_*|}\left(\int_{\R^3} (1+|\eta|)^{-2q+4} |h_{j}(s',\hat X(s'),\eta)|^2d\eta\right)^{1/2} ds'dv_* ds\\
			&\leq \frac{C}{M} \sup_{0\leq s \leq t} \Vert \pmb{h}(s) \Vert_{L^\infty}^2  \\
			&\quad +\sup_{0\leq s \leq t} \Vert \pmb{h}(s) \Vert_{L^\infty}\int_0^t e^{-\lambda_0 (t-s)} \int_{|v_*|\leq M}  |k^w_{ij}(v,v_*)|\\
			&\qquad \times \int_0^s e^{-\lambda_0 (s-s')}
			 \left(\int_{|\eta| \geq 2M} (1+|\eta|)^{-2q+4} |h_{j}(s',\hat X(s'),\eta)|^2d\eta\right)^{1/2} ds'dv_* ds\\
			 &\quad + \sup_{0\leq s \leq t} \Vert \pmb{h}(s) \Vert_{L^\infty}\int_0^t e^{-\lambda_0 (t-s)} \int_{|v_*|\leq M}  |k^w_{ij}(v,v_*)|\\
			 &\qquad \times\int_0^s e^{-\lambda_0 (s-s')}  \left(\int_{|\eta| \leq 2M} (1+|\eta|)^{-2q+4} |h_{j}(s',\hat X(s'),\eta)|^2d\eta\right)^{1/2} ds'dv_* ds \\ 
			&\leq  \frac{C}{\sqrt{M}} \sup_{0\leq s \leq t} \Vert \pmb{h}(s) \Vert_{L^\infty}^2 + \sup_{0\leq s \leq t} \Vert \pmb{h}(s) \Vert_{L^\infty}\int_0^t e^{-\lambda_0 (t-s)} \int_{|v_*|\leq M}  |k^w_{ij}(v,v_*)|\\
			&\quad\times  \Bigg[\int_{s-\delta}^{s}e^{-\lambda_0 (s-s')}\left(\int_{|\eta| \leq 2M} (1+|\eta|)^{-2q+4}|h_j(s',\hat X(s'),\eta)|^2 d\eta\right)^{1/2}ds'dv_*ds  \\
			&\qquad + \int_{0}^{s-\delta}e^{-\lambda_0 (s-s')}\left(\int_{|\eta| \leq 2M} (1+|\eta|)^{-2q+4}|h_j(s',\hat X(s'),\eta)|^2 d\eta\right)^{1/2}ds'dv_*ds\bigg]\\
			&\leq C\left(\frac{1}{\sqrt{M}} +\delta \right)\sup_{0\leq s \leq t} \Vert \pmb{h}(s) \Vert_{L^\infty}^2 \\
			&\quad +\sup_{0\leq s \leq t} \Vert \pmb{h}(s) \Vert_{L^\infty}\int_0^t e^{-\lambda_0 (t-s)} \int_{0}^{s-\delta}e^{-\lambda_0 (s-s')} \left(\int_{|v_*|\leq M, |\eta| \leq 2M} |h_j(s',\hat X(s'),\eta)|^2 d\eta dv_* \right)^{1/2}ds'ds\\
			&\leq C\left(\frac{1}{\sqrt{M}} +\delta \right)\sup_{0\leq s \leq t} \Vert \pmb{h}(s) \Vert_{L^\infty}^2  + C_{M,\delta}\sqrt{\mathcal{E}(\pmb{F}_0)}\sup_{0\leq s \leq t} \Vert \pmb{h}(s) \Vert_{L^\infty} \left(1+\sup_{0\leq s\leq t}\|\pmb{h}(s)\|_{L^\infty}\right)^{1/2},
		\end{align*}
		where we use the condition $q>4$ in \eqref{v_weight}, and apply H\"{o}lder's inequality together with \eqref{k_square}. Moreover, the last inequality follows from \eqref{111} and \eqref{222} in the proof of Proposition \ref{Rf est}. Hence, the term $II_3$ in \eqref{II} can be further bounded by 
		\begin{align} \label{II3} 
			II_3 &\leq C_{q,\gamma,\tilde{t}}\left(\frac{1}{\sqrt{M}} +\delta \right)\sup_{0\leq s \leq t} \Vert \pmb{h}(s) \Vert_{L^\infty}^2 + C_{q,\gamma,M,\delta,\tilde{t}}\sqrt{\mathcal{E}(\pmb{F}_0)}\sup_{0\leq s \leq t} \Vert \pmb{h}(s) \Vert_{L^\infty} \left(1+\sup_{0\leq s\leq t}\|\pmb{h}(s)\|_{L^\infty}\right)^{1/2}. 
		\end{align}
		Since $II_4$ can be treated similarly to $II_3$, we omit the details. Following the same argument, we obtain the following bound for $II_4$:
		\begin{align}\label{II4} 
			II_4 &\leq C_{q,\gamma,\tilde{t}} \left(\frac{1}{M^{\frac{1}{5}}} +\delta \right)\sup_{0\leq s \leq t} \Vert \pmb{h}(s) \Vert_{L^\infty}^2 + C_{q,\gamma,M,\delta,\tilde{t}}\mathcal{E}(\pmb{F}_0)^{\frac{1}{5}} \sup_{0\leq s \leq t}\Vert \pmb{h}(s) \Vert_{L^\infty}^{\frac{8}{5}}\left(1+\sup_{0\leq s\leq t}\|\pmb{h}(s)\|_{L^\infty}\right)^{\frac{1}{5}}.
		\end{align}
		Combining \eqref{II}, \eqref{II2}, \eqref{II3}, and \eqref{II4} yields the bound for $II$ in \eqref{split_h}:
		\begin{align} \label{II_final}
			II &\leq e^{2\lambda_0 \tilde{t}} e^{-\frac{\lambda_0}{2}t} \Vert \pmb{h}_{0}\Vert_{L^\infty}   \nonumber \\
			&\quad +C_{q,\gamma,\tilde{t}} \left(\frac{1}{M^{\frac{1}{5}}} + \delta \right)\left[\sup_{0\leq s \leq t} \Vert \pmb{h}(s) \Vert_{L^\infty}+\sup_{0\leq s \leq t} \Vert \pmb{h}(s) \Vert_{L^\infty}^2 \right]\nonumber \\
			&\quad + C_{q,\gamma,M,\delta,\tilde{t}}\left(\sqrt{\mathcal{E}(\pmb{F}_0)} +\mathcal{E}(\pmb{F}_0)^{\frac{1}{5}}\right) \left[1+ \sup_{0\leq s \leq t} \Vert \pmb{h}(s) \Vert_{L^\infty}^{\frac{9}{5}} \right].
		\end{align}
		From now on, we consider the remaining part $III$ in \eqref{split_h}. We apply Lemma \ref{gain est_new} to estimate $III$
\begin{align} \label{III_split}
    III &= \int_0^t G_{v,i}(t,s) |w\Gamma^+_{i}(\pmb{f})(s, x-(t-s)v, v)| \, ds \cr
    &\leq C_{*,1}\sum_{j=1}^{N} \int_0^t G_{v,i}(t,s) \frac{\|h_j(s)\|_{L^\infty}}{1+|v|} \left( \int_{\mathbb{R}^3} (1+|v_*|)^{-2q+4} |h_i(s, X(s), v_*)|^2 \, dv_* \right)^{1/2} ds \cr
    &\quad +C_{*,1} \sum_{j=1}^{N}  \int_0^t G_{v,i}(t,s) \frac{\|h_i(s)\|_{L^\infty}}{(1+|v|)^{\frac{6}{5}-\gamma}} \left( \int_{\mathbb{R}^3} (1+|v_*|)^{-5q+10} |h_j(s, X(s), v_*)|^5 \, dv_* \right)^{1/5} ds \cr
    &:= III_1 + III_2,
\end{align}
where $X(s) = x-(t-s)v$. We focus on the estimate of $III_2$, since $III_1$ can be treated similarly. We decompose the velocity domain of $v_*$ into $\{|v_*| \geq M\}$ and $\{|v_*| \leq M\}$.
	 For the first case $\{|v_*| \geq M\}$, we get the bound by direct computation
	\begin{align}\label{III2_small1}
		&C_{*,1} \sum_{j=1}^{N}  \int_0^t G_{v,i}(t,s) \frac{\|h_i(s)\|_{L^\infty}}{(1+|v|)^{\frac{6}{5}-\gamma}} \left( \int_{\mathbb{R}^3} (1+|v_*|)^{-5q+10} |h_j(s, X(s), v_*)|^5 \, dv_* \right)^{1/5} ds\cr
		&\leq C_{*,1}e^{\lambda_0 \tilde{t}} \sum_{j=1}^{N}\sup_{0\leq s \leq t }\| h_i(s)\|_{L^\infty} \sup_{0\leq s \leq t }\| h_j(s)\|_{L^\infty} \int_0^te^{-\lambda_0(t-s) } \left( \int_{|v_*|\geq M} (1+|v_*|)^{-5q + 10}  dv_*\right)^\frac{1}{5}ds\cr
		& \leq \frac{C_{q,\gamma,\tilde{t}}}{M^\frac{1}{5}}\sup_{0\leq s \leq t }\| \pmb{h}(s)\|_{L^\infty}^2.
	\end{align}
	For the second case $\{|v_*| \leq M\}$, we apply Duhamel's principle once more to $h_j$. Recall that $\hat{X}(\tau) = X(s) - (s-\tau)v_*$. Using the triangle inequality and \eqref{Gv}, we obtain the bound	
	\begin{align} \label{III2_split}
	III_2 \leq III_{2,1} + III_{2,2} + III_{2,3},
	\end{align}
	where the terms are defined by
	\begin{align*}
		III_{2,1} &:= C_{q,\gamma}e^{2\lambda_0 \tilde{t}} \sum_{j=1}^N \int_0^t e^{-\lambda_0 (t-s)} \Vert \pmb{h}(s) \Vert _{L^\infty} \left( \int_{|v_*|\leq M} (1+|v_*|)^{-5q + 10 }  e^{-5\lambda_0 s}|h_{0,j}(\hat X(0),v_*)|^5 dv_*\right)^\frac{1}{5}ds,\cr
		III_{2,2} &:= C_{q,\gamma} e^{2\lambda_0 \tilde{t}} \int_0^t e^{-\lambda_0(t-s)} \Vert \pmb{h}(s) \Vert_{L^\infty} \\
		&\quad \times \left( \int_{|v_*|\leq M} (1+|v_*|)^{-5q + 10 }  \left(\int_{s-\delta}^{s}e^{-\lambda_0 (s-\tau)}\left|[K_{w,j}(\pmb{h})+w\Gamma^+_{j}(\pmb{f})](\tau,\hat X(\tau),v_*)\right|d\tau\right)^5 dv_*\right)^\frac{1}{5}ds,\cr
		III_{2,3} &:=C_{q,\gamma} e^{2\lambda_0 \tilde{t}} \int_0^t e^{-\lambda_0(t-s)} \Vert \pmb{h}(s) \Vert_{L^\infty} \\
		&\quad \times \left( \int_{|v_*|\leq M} (1+|v_*|)^{-5q + 10 }  \left(\int_{0}^{s-\delta}e^{-\lambda_0 (s-\tau)}\left|[K_{w,j}(\pmb{h})+w\Gamma^+_{j}(\pmb{f})](\tau,\hat X(\tau),v_*)\right|d\tau\right)^5 dv_*\right)^\frac{1}{5}ds.
	\end{align*}
	The terms $III_{2,1}$ and $III_{2,2}$ in \eqref{III2_split} can be estimated in the same way as \eqref{rf1} and \eqref{rf2} in the proof of Proposition \ref{Rf est}. In particular, we obtain
		\begin{align} \label{III2,1}
		III_{2,1}\leq C_{q,\gamma} e^{2\lambda_0 \tilde{t}}e^{-\lambda_0 t}\|\pmb{h}_{0}\|_{L^\infty} \int_0^t \Vert  \pmb{h}(s)\Vert_{L^\infty} ds
	\end{align}
	and 
	\begin{align} \label{III2,2}
		III_{2,2}&\leq C_{q,\gamma,\tilde{t}}\delta\sup_{0\leq s\leq t} \|\pmb{h}(s)\|_{L^\infty}^2\left(1+ \sup_{0\leq s \leq t}\|\pmb{h}(s)\|_{L^\infty}\right).
	\end{align}
	It remains to estimate the term $III_{2,3}$.  To apply Lemma \ref{RE}, we split the fifth power into cubic and quadratic parts by using Lemma \ref{K est} and Corollary \ref{nonlinear est}. Specifically,
		\begin{align} \label{III2,3_split}
		III_{2,3}&\leq C_{q,\gamma,\tilde{t}}\sup_{0\leq s \leq t} \Vert \pmb{h}(s)\Vert_{L^\infty} \left[\sup_{0\leq s\leq t}\|\pmb{h}(s)\|_{L^\infty}  + \sup_{0\leq s \leq t} \|\pmb{h}(s)\|_{L^\infty}^2 \right]^{3/5} \int_0^t e^{-\lambda_0(t-s)}ds \cr 
		&\quad \times 
		\left( \int_{|v_*|\leq M} (1+|v_*|)^{-5q + 10 }  \left(\int_{0}^{s-\delta}e^{-\lambda_0 (s-\tau)}\left|[K_{w,j}(\pmb{h})+w\Gamma^+_{j}(\pmb{f})](\tau,\hat X(\tau),v_*)\right|d\tau\right)^2 dv_*\right)^\frac{1}{5}\cr
		&\leq C_{q,\gamma,\tilde{t}}\sup_{0\leq s \leq t} \Vert \pmb{h}(s)\Vert_{L^\infty} \left[\sup_{0\leq s\leq t}\|\pmb{h}(s)\|_{L^\infty}  + \sup_{0\leq s \leq t} \|\pmb{h}(s)\|_{L^\infty}^2 \right]^{3/5} \int_0^t e^{-\lambda_0(t-s)}ds\cr
		&\quad  \times \left(III_{2,3,1} + III_{2,3,2}\right), 
	\end{align}
	where 
	\begin{align*}
		III_{2,3,1}&=\left( \int_{|v_*|\leq M} (1+|v_*|)^{-5q + 10 }  \left(\int_{0}^{s-\delta}e^{-\lambda_0 (s-\tau)}\left|K_{w,j}(\pmb{h})(\tau,\hat X(\tau),v_*)\right|d\tau\right)^2 dv_*\right)^\frac{1}{5},\cr
		III_{2,3,2}&=\left( \int_{|v_*|\leq M} (1+|v_*|)^{-5q + 10 }  \left(\int_{0}^{s-\delta}e^{-\lambda_0 (s-\tau)}\left|w\Gamma^+_{j}(\pmb{f})(\tau,\hat X(\tau),v_*)\right|d\tau\right)^2 dv_*\right)^\frac{1}{5}.			
	\end{align*}
	For the term $III_{2,3,1}$, we proceed with the following estimate 
	\begin{align*}
		II&I_{2,3,1}\\
		\leq& \sum_{k=1}^{N}\left( \int_{|v_*|\leq M} (1+|v_*|)^{-5q + 10 }  \left(\int_{0}^{s-\delta}\int_{\mathbb{R}^3}e^{-\lambda_0 (s-\tau)}\left|k^w_{jk}(v_*,\eta)h_k(\tau,\hat X(\tau),\eta)\right|d\eta d\tau\right)^2 dv_*\right)^\frac{1}{5}\cr
		=&\sum_{k=1}^{N}\bigg[ \int_{|v_*|\leq M} (1+|v_*|)^{-5q + 10 }\\
		&\qquad\times  \left(\int_{0}^{s-\delta}\left(\int_{|\eta|\geq 2M}+\int_{|\eta|\leq 2M}\right)e^{-\lambda_0 (s-\tau)}\left|k^w_{jk}(v_*,\eta)h_k(\tau,\hat X(\tau),\eta)\right|d\eta d\tau\right)^2 dv_*\bigg]^\frac{1}{5}\cr
		\leq& \frac{C_{q}}{M^\frac{2}{5}}\sup_{0\leq s \leq t }\|\pmb{h}(s)\|^{\frac{2}{5}}_{L^\infty}\cr
		&\quad +\sum_{k=1}^{N}\left( \int_{|v_*|\leq M} (1+|v_*|)^{-5q + 10 }  \left(\int_{0}^{s-\delta}\int_{|\eta|\leq 2M}e^{-\lambda_0 (s-\tau)}\left|k^w_{jk}(v_*,\eta)h_k(\tau,\hat X(\tau),\eta)\right|d\eta d\tau\right)^2 dv_*\right)^\frac{1}{5}.
	\end{align*}
	Here, the last inequality follows from the decay estimate of the weighted kernel $k^w_{jk}$ in Lemma \ref{K est}, which yields a decay factor of order $M^{-2/5}$ for $|v_*|\leq M$ and $|\eta|\geq 2M$. For the remaining part of $III_{2,3,1}$, we apply Hölder’s inequality combined with \eqref{k_square} to obtain 
	\begin{align*}
		& \left(\int_{0}^{s-\delta}\int_{|\eta|\leq 2M}e^{-\lambda_0 (s-\tau)}\left|k^w_{jk}(v_*,\eta)h_k(\tau,\hat X(\tau),\eta)\right|d\eta d\tau\right)^2 \cr
		&\leq \left( \int_{0}^{s-\delta}e^{-\lambda_0 (s-\tau)}   \int_{|\eta|\leq 2M}\left|k^w_{jk}(v_*,\eta*)\right|^2d\eta d\tau \right) \left(\int_0^{s-\delta} e^{-\lambda_0 (s-\tau)} \int_{|\eta|\leq 2M}\left|h_k(\tau,\hat X(\tau),\eta)\right|^2d\eta d\tau \right)\\
		& \leq C\int_0^{s-\delta} e^{-\lambda_0 (s-\tau)} \int_{|\eta|\leq 2M}\left|h_k(\tau,\hat X(\tau),\eta)\right|^2d\eta d\tau. 
	\end{align*}
	By a similar argument as in \eqref{torus f}, we obtain	
	\begin{align*}
		&\left( \int_{|v_*|\leq M} (1+|v_*|)^{-5q + 10 }  \left(\int_{0}^{s-\delta}\int_{|\eta|\leq 2M}e^{-\lambda_0 (s-\tau)}\left|k^w_{jk}(v_*,\eta)h_k(\tau,\hat X(\tau),\eta)\right|d\eta d\tau\right)^2 dv_*\right)^\frac{1}{5} \\ 
		&\leq C_{q,M}\left( \int_{0}^{s-\delta}e^{-\lambda_0 (s-\tau)}\int_{|v_*|\leq M, |\eta|\leq 2M}\left|f_k(\tau,\hat X(\tau),\eta)\right|^2d\eta dv_* d\tau \right)^\frac{1}{5}\\
		&\leq C_{q,M,\delta} \mathcal{E}(\pmb{F}_0)^{\frac{1}{5}} \left(1+ \sup_{0\leq s \leq t} \Vert \pmb{h} (s) \Vert_{L^\infty}\right)^{\frac{1}{5}}, 
	\end{align*}
	which implies the following bound for $III_{2,3,1}$
	\begin{align}\label{III2,3,1}
		III_{2,3,1} \leq  \frac{C_{q}}{M^\frac{2}{5}}\sup_{0\leq s \leq t }\|\pmb{h}(s)\|^{\frac{2}{5}}_{L^\infty} + C_{q,M,\delta} \mathcal{E}(\pmb{F}_0)^{\frac{1}{5}} \left(1+ \sup_{0\leq s \leq t} \Vert \pmb{h} (s) \Vert_{L^\infty}\right)^{\frac{1}{5}}. 
	\end{align}
	Next, let us consider $III_{2,3,2}$. It follows from Lemma \ref{gain est_new} that
	\begin{align} \label{split_III2,3,2}
		&III_{2,3,2}\cr
		&=\left( \int_{|v_*|\leq M} (1+|v_*|)^{-5q + 10 }  \left(\int_{0}^{s-\delta}e^{-\lambda_0 (s-\tau)}\left|w\Gamma^+_{j}(\pmb{f})(\tau,\hat X(\tau),v_*)\right|d\tau\right)^2 dv_*\right)^\frac{1}{5}\cr
		&\leq C_{*,1}\left( \int_{|v_*|\leq M} (1+|v_*|)^{-5q + 10 }  \left(\int_{0}^{s-\delta}e^{-\lambda_0 (s-\tau)}\sum_{k=1}^{N}\frac{\|h_k(\tau)\|_{L^\infty}}{1+|v_*|}\left(\int_{\R^3} \frac{|h_j(\tau,\hat X(\tau),\eta)|^2}{(1+|\eta|)^{2q-4}}d\eta\right)^\frac{1}{2}d\tau\right)^2 dv_*\right)^\frac{1}{5}\cr
		&+ C_{*,1}\left( \int_{|v_*|\leq M} (1+|v_*|)^{-5q + 10 }  \left(\int_{0}^{s-\delta}e^{-\lambda_0 (s-\tau)}\sum_{k=1}^{N}\frac{\Vert h_j(\tau)\Vert_{L^\infty}}{(1+|v_*|)^{\frac{6}{5}-\gamma}}\left( \int_{\R^3} \frac{|h_k(\tau,\hat X(\tau),\eta)|^5}{(1+|\eta|)^{5q - 10 } } d\eta\right)^\frac{1}{5}d\tau\right)^2 dv_*\right)^\frac{1}{5}\cr
		&:= III_{2,3,2,1}+III_{2,3,2,2}.
	\end{align}
	Since both terms can be treated similarly, we only provide the estimate for $III_{2,3,2,2}$ and omit the details for $III_{2,3,2,1}$. To estimate $III_{2,3,2,2}$, we split the $\eta$-domain into $|\eta| \geq 2M$ and $|\eta| \leq 2M$. For the region $|\eta| \geq 2M$, we obtain
	\begin{align} \label{III2,3,2,2,1}
    &\int_{|v_*|\leq M} (1+|v_*|)^{-5q+10} \left( \int_{0}^{s-\delta} e^{-\lambda_0 (s-\tau)}  \sum_{k=1}^{N} \frac{\|h_j(\tau)\|_{L^\infty}}{(1+|v_*|)^{\frac{6}{5}-\gamma}} \left( \int_{|\eta|\geq 2M} \frac{|{h}_k(\tau, \hat{X}(\tau), \eta)|^5}{(1+|\eta|)^{5q - 10}} d\eta \right)^{1/5} d\tau \right)^2 dv_* \cr
    &\leq C\int_{|v_*|\leq M} (1+|v_*|)^{-5q+10} \left( \int_{0}^{s-\delta} e^{-\lambda_0 (s-\tau)}  \sum_{k=1}^{N} \frac{\|\pmb{h}(\tau)\|_{L^\infty}}{(1+|v_*|)^{\frac{6}{5}-\gamma}} \left( \int_{|\eta|\geq 2M} \frac{|{h}_k(\tau, \hat{X}(\tau), \eta)|^5}{(1+|\eta|)^{5q - 11}M} d\eta \right)^{1/5}  d\tau \right)^2 dv_* \cr
    &\leq  \frac{C_{q}}{M^{2/5}} \sup_{0\leq s \leq t} \Vert \pmb{h}(s) \Vert_{L^\infty}^{4} \int_{|v_*|\leq M} (1+|v_*|)^{-5q+10} \left( \int_{0}^{s-\delta} e^{-\lambda_0 (s-\tau)}  d\tau \right)^2 dv_* \cr
    &\leq \frac{C_{q}}{M^{2/5}} \sup_{0\leq s \leq t} \Vert \pmb{h}(s) \Vert_{L^\infty}^{4}. 
\end{align}
	For the region $|\eta| \leq 2M$, it follows from H\"{o}lder's inequality and \eqref{torus f} that 
	\begin{align}\label{III2,3,2,2,2}
       &\int_{|v_*|\leq M} (1+|v_*|)^{-5q+10} \left( \int_{0}^{s-\delta} e^{-\lambda_0 (s-\tau)}  \sum_{k=1}^{N}  \frac{\|h_j(\tau)\|_{L^\infty}}{(1+|v_*|)^{\frac{6}{5}-\gamma}} \left( \int_{|\eta|\leq 2M} \frac{|{h}_k(\tau, \hat{X}(\tau), \eta)|^5}{(1+|\eta|)^{5q - 10}} d\eta \right)^{1/5} d\tau \right)^2 dv_* \cr
               &\leq C \sup_{0\leq s \leq t} \|\pmb{h}(s)\|^2_{L^\infty} \int_{0}^{s-\delta} e^{-\lambda_0(s-\tau)} \int_{|v_*|\leq M} \left( \int_{|\eta|\leq 2M} |{h}_k(\tau, \hat{X}(\tau), \eta)|^5 d\eta \right)^{2/5} dv_* d\tau \cr
        &\leq C_M \sup_{0\leq s \leq t} \|\pmb{h}(s)\|^{\frac{16}{5}}_{L^\infty} \int_{0}^{s-\delta} e^{-\lambda_0(s-\tau)} \left( \int_{|v_*|\leq M, |\eta|\leq 2M} |{f}_k(\tau, \hat{X}(\tau), \eta)|^2 d\eta dv_* \right)^{2/5} d\tau \cr
        &\leq C_{M,\delta} \mathcal{E}(\pmb{F}_0)^{\frac{2}{5}} \sup_{0\leq s \leq t } \Vert \pmb{h}(s)\Vert_{L^\infty}^{\frac{16}{5}}\left(1+\sup_{0\leq s \leq t} \Vert \pmb{h}(s) \Vert_{L^\infty}\right)^{\frac{2}{5}}.
\end{align}
	Thus, combining \eqref{split_III2,3,2}, \eqref{III2,3,2,2,1}, and \eqref{III2,3,2,2,2} yields the estimate for $III_{2,3,2,2}$  
	\begin{align}\label{III2,3,2,2}
		III_{2,3,2,2} \leq \frac{C_{q,\gamma}}{M^{2/25}} \sup_{0\leq s \leq t} \Vert \pmb{h}(s) \Vert_{L^\infty}^{\frac{4}{5}} + C_{q,\gamma, M,\delta} \mathcal{E}(\pmb{F}_0)^{\frac{2}{25}} \sup_{0\leq s \leq t} \Vert \pmb{h}(s) \Vert_{L^\infty}^{\frac{16}{25}} \left(1+\sup_{0\leq s \leq t} \Vert \pmb{h}(s) \Vert_{L^\infty} \right)^{\frac{2}{25}}. 
	\end{align}
	For $III_{2,3,2,1}$, the estimate can be derived by a similar argument; hence, we only state the result:
	\begin{align}  \label{III2,3,2,1}
		III_{2,3,2,1} \leq  \frac{C_{q,\gamma}}{M^\frac{1}{5}} 	\sup_{0\leq s \leq t}\Vert \pmb{h}(s)\Vert_{L^\infty}^{\frac{4}{5}} 	+ C_{q,\gamma,M,\delta} \mathcal{E}(\pmb{F}_0)^{\frac{1}{5}}\sup_{0\leq s \leq t} \Vert \pmb{h}(s) \Vert_{L^\infty}^{\frac{2}{5}} \left(1+\sup_{0\leq s \leq t} \Vert \pmb{h}(s) \Vert_{L^\infty}\right)^{\frac{1}{5}}. 
	\end{align}
	From \eqref{split_III2,3,2}, \eqref{III2,3,2,2} and \eqref{III2,3,2,1},  we obtain the following bound for $III_{2,3,2}$ 
	\begin{align} \label{III2,3,2}
		III_{2,3,2} &\leq \frac{C_{q,\gamma}}{M^\frac{1}{5}} 	\sup_{0\leq s \leq t}\Vert \pmb{h}(s)\Vert_{L^\infty}^{\frac{4}{5}}  + C_{q,\gamma,M,\delta} \left[\mathcal{E}(\pmb{F}_0)^{\frac{2}{25}}+\mathcal{E}(\pmb{F}_0)^{\frac{1}{5}}\right]\left(1+\sup_{0\leq s \leq t} \Vert \pmb{h}(s)\Vert_{L^\infty}\right)^{\frac{18}{25}}. 
			\end{align}
	Combining \eqref{Gv},\eqref{III2_small1},\eqref{III2_split}\eqref{III2,1},\eqref{III2,2},\eqref{III2,3_split},\eqref{III2,3,1}, and \eqref{III2,3,2}, we have completed the estimate for $III_2$ in \eqref{III_split} as follows:
	\begin{align} \label{III2}
		\begin{split}
    III_2 &\leq C_{q,\gamma} e^{2\lambda_0 \tilde{t}} e^{-\lambda_0 t} \Vert \pmb{h}_0\Vert_{L^\infty} \int_0^t\Vert \pmb{h}(s) \Vert_{L^\infty} ds\cr
    &\quad + C_{q,\gamma,\tilde{t}}\left(\frac{1}{M^{\frac{1}{5}}} +\delta\right)\left(1+ \sup_{0\leq s \leq t} \Vert \pmb{h}(s) \Vert_{L^\infty} \right)^3 \\
    &\quad + C_{q,\gamma,M,\delta,\tilde{t}} \left[\mathcal{E}(\pmb{F}_0)^{\frac{2}{25}}+\mathcal{E}(\pmb{F}_0)^{\frac{1}{5}}\right]\left(1+\sup_{0\leq s \leq t} \Vert \pmb{h}(s)\Vert_{L^\infty}\right)^3.
    \end{split}
    \end{align}	
The term $III_1$ can be handled similarly, and we obtain	
\begin{align} \label{III1}
		\begin{split}
		III_1 &\leq C_{q,\gamma}e^{2\lambda_0 \tilde{t}}e^{-\lambda_0 t}\|\pmb{h}_{0}\|_{L^\infty}\int_0^t \Vert \pmb{h}(s) \Vert_{L^\infty} ds \cr
		&\quad + C_{q,\gamma,\tilde{t}} \left( \frac{1}{\sqrt{M}} +\delta\right) \left(1+ \sup_{0 \leq s \leq t} \Vert \pmb{h}(s) \Vert_{L^\infty} \right)^3 \\
		&\quad + C_{q,\gamma,M,\delta,\tilde{t}} \left[ \sqrt{\mathcal{E}(\pmb{F}_0)} + \mathcal{E}(\pmb{F}_0)^{\frac{1}{5}} \right] \left(1+ \sup_{0\leq s \leq t} \Vert \pmb{h}(s) \Vert_{L^\infty} \right)^3. 
		\end{split}
	\end{align}
	Using the estimates \eqref{III_split},\eqref{III2}, and \eqref{III1}, we obtain the following estimate for $III$:
	\begin{align}\label{III_final}
		\begin{split}
		III &\leq C_{q,\gamma}e^{2\lambda_0 \tilde{t}}e^{-\lambda_0 t}\|\pmb{h}_{0}\|_{L^\infty}\int_0^t \Vert \pmb{h}(s) \Vert_{L^{\infty}} ds \cr
		&\quad + C_{q,\gamma,\tilde{t}} \left(\frac{1}{M^{\frac{1}{5}}} +\delta\right)\left(1+ \sup_{0\leq s \leq t} \Vert \pmb{h}(s) \Vert_{L^\infty} \right)^3 \\
		&\quad + C_{q,\gamma,M,\delta,\tilde{t}} \left[\mathcal{E}(\pmb{F}_0)^{\frac{2}{25}} +\sqrt{\mathcal{E}(\pmb{F}_0)}\right] \left(1+ \sup_{0\leq s \leq t} \Vert \pmb{h}(s) \Vert_{L^\infty} \right)^3.
		\end{split}			
		\end{align}
	 For each $i \in \{1,\cdots, N\}$, combining \eqref{I_final}, \eqref{II_final}, and \eqref{III_final}, we have
	\begin{align*}
		|h_i(t,x,v)|&\leq C_{q,\gamma} e^{2\lambda_0 \tilde{t}}e^{-\frac{\lambda_0}{2} t} \Vert \pmb{h}_{0} \Vert_{L^\infty} \left(1+ \int_0^t \Vert \pmb{h}(s) \Vert_{L^\infty} ds \right) \\
					&\quad + C_{q,\gamma,\tilde{t}} \left(\frac{1}{M^{\frac{1}{5}}} +\delta\right)\left(1+ \sup_{0\leq s \leq t} \Vert \pmb{h}(s) \Vert_{L^\infty} \right)^3\\
					&\quad + C_{q,\gamma,M,\delta,\tilde{t}} \left[\mathcal{E}(\pmb{F}_0)^{\frac{2}{25}} +\sqrt{\mathcal{E}(\pmb{F}_0)}\right] \left(1+ \sup_{0\leq s \leq t} \Vert \pmb{h}(s) \Vert_{L^\infty} \right)^3.
	\end{align*}
	By taking the summation over $i \in \{1,\cdots, N\}$, we derive the estimate
	\begin{align*}
		\|\pmb{h}(t)\|_{L^\infty}&\leq C_{q,\gamma}e^{2\lambda_0 \tilde{t}}e^{-\frac{\lambda_0}{2} t} \| \pmb{h}_{0} \|_{L^\infty}\left(1+\int_{0}^{t}\|\pmb{h}(s)\|_{L^\infty}ds\right)+D,
	\end{align*}
	where 
	\begin{align} \label{D}
		D:=D(M,\delta,\mathcal{E}(\pmb{F}_0),\Vert \pmb{h}_0 \Vert_{L^\infty}, \bar{M}) &=  C_{q,\gamma,\tilde{t}} \left(\frac{1}{M^{\frac{1}{5}}} +\delta\right)\left(1+ \sup_{0\leq s \leq t} \Vert \pmb{h}(s) \Vert_{L^\infty} \right)^3 \cr
		&\quad + C_{q,\gamma,M,\delta,\tilde{t}} \left[\mathcal{E}(\pmb{F}_0)^{\frac{2}{25}} +\sqrt{\mathcal{E}(\pmb{F}_0)}\right] \left(1+ \sup_{0\leq s \leq t} \Vert \pmb{h}(s) \Vert_{L^\infty} \right)^3.
	\end{align}
		\end{proof}

		\section{Proof of theorem \ref{thm:main}}\label{sec:large_regime}
		In this section, we prove Theorem~\ref{thm:main} based on the estimates obtained in the previous sections.
		\subsection{Local in time existence in $L^\infty$}
		We now state the local-in-time existence theorem for the multi-species Boltzmann equation \eqref{mixed bol}.
		\begin{lemma}\label{local in time}
			Let $q>4$ be fixed as in the velocity weight function \eqref{v_weight}. If $F_{0,i}(x,v)=\mu_i(v)+\sqrt{\mu_i(v)}f_{0,i}(x,v)\geq 0$ and $ \|w\pmb{f}_{0}\|_{L^\infty}\leq M_0$ for some constant $M_0>0$, then there exists a time $\hat t= \hat{t}(M_0) > 0$ such that the initial value problem \eqref{mixed bol} with initial data $F_{0,i}(x,v)$ has a unique non-negative solution $F_i(t,x,v)=\mu_i(v)+\sqrt{\mu_i(v)}f_i(t,x,v)$ for all $1\leq i\leq N$ and $t\in[0,\hat t]$, satisfying
			\begin{align*}
				\sup_{0\leq t\leq \hat t}\|w\pmb{f}(t)\|_{L^\infty}\leq 2\|w\pmb{f}_0\|_{L^\infty}.
			\end{align*}
		\end{lemma}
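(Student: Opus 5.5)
We construct the local solution by a monotone/contractive iteration in the weighted space $\{\pmb h=w\pmb f:\ \sup_{0\le t\le\hat t}\|\pmb h(t)\|_{L^\infty}\le 2M_0\}$, using the positivity-preserving scheme from the proof of Theorem~\ref{thm:main2}. Starting from $F_i^0\equiv\mu_i$, define $F_i^{m+1}$ by
\begin{align*}
\p_t F_i^{m+1}+v\cdot\nabla_x F_i^{m+1}+\nu_i(\pmb f^m)F_i^{m+1}=\sum_{j=1}^N Q_{ij}^+(F_i^m,F_j^m),\qquad F_i^{m+1}(0)=F_{0,i},
\end{align*}
where $\nu_i(\pmb f^m)=\sum_j\int B_{ij}F_j^m(v_*)\,d\sigma dv_*$. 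Along characteristics the Duhamel formula expresses $F_i^{m+1}$ as an exponential factor times $F_{0,i}$ plus a time integral of the gain term. By induction, $F_i^m\ge0$ gives $\nu_i(\pmb f^m)\ge0$ and $Q^+_{ij}\ge0$, so $F_i^{m+1}\ge0$. This settles nonnegativity, and it also means the exponential factor is at most $1$, so we never need a lower bound on the damping (contrast Proposition~\ref{Rf est}).

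\textbf{Uniform bound.} Write $F_i^{m}=\mu_i+\sqrt{\mu_i}f_i^m$ and set $h_i^m=wf_i^m$. Then $h_i^{m+1}$ satisfies
\begin{align*}
\p_t h_i^{m+1}+v\cdot\nabla_x h_i^{m+1}+\mathcal R_i(\pmb f^m)h_i^{m+1}=K_{w,i}(\pmb h^m)+w\Gamma_i^+(\pmb f^m)+(\text{linear terms in }\pmb h^m).
\end{align*}
Here $\mathcal R_i(\pmb f^m)=\nu_i(\pmb f^m)\ge0$ because $F^m\ge0$, so its exponential factor is again at most $1$. We bound $|K_{w,i}(\pmb h^m)|\le C\|\pmb h^m\|_{L^\infty}$ by Lemma~\ref{K est}, and $|w\Gamma^+_i|\le C\|\pmb h^m\|_{L^\infty}^2$ by Corollary~\ref{nonlinear est}. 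This yields
\[
\sup_{0\le s\le t}\|\pmb h^{m+1}(s)\|_{L^\infty}\le \|\pmb h_0\|_{L^\infty}+C t\bigl(1+\sup_{s\le t}\|\pmb h^m(s)\|_{L^\infty}\bigr)^2 .
\]
Choose $\hat t$ so that $C\hat t(1+2M_0)^2\le M_0$. Then the bound $2M_0$ propagates from $\pmb h^m$ to $\pmb h^{m+1}$; note that $\hat t$ depends only on $M_0$.

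\textbf{Contraction.} The difference $\pmb h^{m+1}-\pmb h^m$ has zero initial data. Its source involves $K_w(\pmb h^m-\pmb h^{m-1})$, gain differences that are bilinear as in the proof of Theorem~\ref{thm:main2}, and the term $(\mathcal R_i(\pmb f^m)-\mathcal R_i(\pmb f^{m-1}))h_i^m$. For the last term, Corollary~\ref{nonlinear est} gives
\[
\bigl|\mathcal R_i(\pmb f^m)-\mathcal R_i(\pmb f^{m-1})\bigr|\le C\langle v\rangle^\gamma\|\pmb h^m-\pmb h^{m-1}\|_{L^\infty}.
\]

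\textbf{Main obstacle.} The factor $\langle v\rangle^\gamma$ in this difference term is unbounded, and an $O(t)$ bound is not available from $\mathcal R\ge0$ alone. To absorb it we use the linear collision frequency: we write the damping as $\nu_i(v)+(\text{loss perturbation})$, or split off the loss part and keep only $e^{-\int\nu_i}$ in front. Then
\[
\int_0^t e^{-\nu_i(v)(t-s)}\langle v\rangle^\gamma\,ds\lesssim 1-e^{-\nu_0 t}\lesssim t^{\theta}
\]
for some small power $\theta>0$, since for large $|v|$ the integral is $O(1)$ and for bounded $|v|$ it is $O(t)$. Alternatively, we can measure the differences in the slightly weaker weight $\langle v\rangle^{q-\gamma}$, so that the $\langle v\rangle^\gamma$ growth is absorbed by the weight gap. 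We expect this to be the one delicate step.

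\textbf{Conclusion.} Shrinking $\hat t$ if needed gives
\[
\sup_{t\le\hat t}\|\pmb h^{m+1}-\pmb h^m\|\le\tfrac12\sup_{t\le\hat t}\|\pmb h^m-\pmb h^{m-1}\|.
\]
Hence the iterates form a Cauchy sequence, and the limit is a mild solution that inherits nonnegativity and the bound $2M_0$. Uniqueness in this class follows from the same difference estimate, applied to two solutions with the same initial data.
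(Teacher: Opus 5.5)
Your iteration and nonnegativity argument are the paper's (same scheme starting from $F_i^0=\mu_i$), but you close the estimates differently, and more carefully. The paper keeps only $\nu_i$ in the exponent and moves $-\sum_j w\Gamma^-(f_i^{n+1},f_j^n)$ to the right-hand side. It then bounds every source term by $C\hat t$ in the $w$-norm, both for the uniform bound and, ``by the same argument'', for the contraction. Corollary~\ref{nonlinear est} only gives $|w\Gamma^-|\lesssim\langle v\rangle^\gamma\|wf_i\|_{L^\infty}\|w\pmb f\|_{L^\infty}$, so this tacitly drops a factor $\langle v\rangle^\gamma$ when $\gamma>0$. Your formulation $(\p_t+v\cdot\nabla_x+\mathcal R_i(\pmb f^m))h_i^{m+1}=K_{w,i}(\pmb h^m)+w\Gamma_i^+(\pmb f^m)$, with $\mathcal R_i(\pmb f^m)\ge0$, avoids this and gives the uniform bound cleanly; there are in fact no further linear terms. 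One minor fix: the source is bounded by $C(\|\pmb h^m\|_{L^\infty}+\|\pmb h^m\|_{L^\infty}^2)$ with no constant term. So $\hat t\le[2C(1+2M_0)]^{-1}$ yields the stated bound $2\|w\pmb f_0\|_{L^\infty}$, whereas your choice $C\hat t(1+2M_0)^2\le M_0$ only gives $2M_0$.

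In the contraction step, your first remedy is false and must be dropped. The display $\int_0^t e^{-\nu_i(v)(t-s)}\langle v\rangle^\gamma ds\lesssim 1-e^{-\nu_0 t}$ goes the wrong way. By Lemma~\ref{CFE} the left side is comparable to $1-e^{-\nu_i(v)t}\ge 1-e^{-\nu_0 t}$, and its supremum over $v$ is of order one for every $t>0$; your own remark ``$O(1)$ for large $|v|$'' says exactly this. Multiplied by $\|h_i^m\|_{L^\infty}\sim M_0$, this gives a Lipschitz constant $\sim CM_0$, which is not small for large data however small $\hat t$ is. Any damping of size $\lesssim\nu_i(v)$ only converts $\langle v\rangle^\gamma$ into an $O(1)$ factor, never a small one.

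Your second remedy is the one that works. With $\tilde w=\langle v\rangle^{q-\gamma}$,
\[
\bigl|\tilde w\,(\mathcal R_i(\pmb f^m)-\mathcal R_i(\pmb f^{m-1}))f_i^m\bigr|\le C\langle v\rangle^{\gamma}\|\tilde w(\pmb f^m-\pmb f^{m-1})\|_{L^\infty}\,\langle v\rangle^{-\gamma}\|h_i^m\|_{L^\infty},
\]
which is bounded. So this term is $O(\hat t M_0)$, while the damping $\mathcal R_i(\pmb f^m)\ge0$ stays on the left. To complete the argument you need the following:
\begin{itemize}
\item The mixed-weight bounds $|\tilde w\Gamma^+(g,h)|\le C\|\tilde w g\|_{L^\infty}\|wh\|_{L^\infty}$ and $|\tilde w\Gamma^+(g,h)|\le C\|w g\|_{L^\infty}\|\tilde wh\|_{L^\infty}$. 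These follow from the splitting $\tilde w(v)\lesssim\tilde w(v')+\tilde w(v'_*)$ as in \eqref{gain_split}, together with Lemma~\ref{carleman 3} and the $I_2$ computation, since $q-\gamma>3$.
\item Lemma~\ref{K est} with $\beta=q-\gamma$, to control $K$ in the $\tilde w$-norm.
\item The fact that the uniform bound $\|w\pmb f^m\|_{L^\infty}\le 2\|w\pmb f_0\|_{L^\infty}$ survives the $\tilde w$-limit, because that convergence is a.e. Hence the limit solves the mild equation with the stated bound.
\item Uniqueness then holds among nonnegative solutions with $\sup_t\|w\pmb f\|_{L^\infty}<\infty$. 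The difference estimate uses $\mathcal R_i\ge0$ for one solution and the $w$-bounds of both.
\end{itemize}
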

		\begin{proof}
			We construct a sequence $\{F_i^n\}_{n\ge 0}$ via the following iteration scheme:
			\begin{align}\label{itera}
				\left\{
				\begin{aligned}
					&\partial_t F^{n+1}_i + v\cdot \nabla_x F^{n+1}_i
					+ F^{n+1}_i\sum_{j=1}^N\int_{\mathbb{R}^3\times\mathbb{S}^2} B_{ij}(v-v_*,\sigma)\,F^{n}_j(v_*)\,d\sigma\,dv_*
					=\sum_{j=1}^NQ_{ij}^+(F^{n}_i,F^{n}_j),\\
					&F_i^{n+1}(0,x,v)=F_{0,i}(x,v)\ge 0,\qquad F^{0}_i(t,x,v)=\mu_i(v).
				\end{aligned}
				\right.
			\end{align}
			First, we prove the non-negativity of $\{F_i^n\}_{n\ge 0}$.
			Recall the definition of the collision frequency $\nu_i(v)$
			\begin{align*}
				\nu_i(v)
				= \sum_{j=1}^N \int_{\mathbb{R}^3 \times \mathbb{S}^2}
				B_{ij}(v-v_*,\sigma)\,\mu_j(v_*)\,d\sigma\,dv_* .
			\end{align*}
			Then the first iterate $F_i^{1}$ admits the representation
			\begin{align*}
				F_i^{1}(t,x,v)
				&= e^{-\nu_i(v)t}\,F_{0,i}(x-tv,v) \\
				&\quad + \sum_{j=1}^N \int_{0}^{t} e^{-\nu_i(v)(t-s)}
				\int_{\mathbb{R}^3\times \mathbb{S}^2}
				B_{ij}(v-v_*,\sigma)\,\mu_j(v_*)\,d\sigma\,dv_* \,\mu_i(v)\,ds
				\;\ge 0 .
			\end{align*}
			Since $F_{0,i}\ge 0$ and $\mu_i\ge 0$, it follows that $F_i^{1}\ge 0$. Assume that $F_i^{n}\ge 0$ for all $i=1,\dots,N$.
			By Duhamel's principle, we obtain
			\begin{align*}
				F_i^{n+1}(t,x,v)
				&= e^{-\int_{0}^{t} I_i^n(\tau,X(\tau),v)\,d\tau}\,F_{0,i}(x-tv,v) \\
				&\quad + \sum_{j=1}^N \int_{0}^{t}
				e^{-\int_{s}^{t} I_i^n(\tau,X(\tau),v)\,d\tau}\,
				Q_{ij}^+(F_i^{n},F_j^{n})(s,X(s),v)\,ds.
			\end{align*}
			Here $I_i^n(t,x,v)$ is defined by 
			\begin{align*}
				I_i^n(t,x,v)
				:= \sum_{j=1}^N \int_{\mathbb{R}^3\times \mathbb{S}^2}
				B_{ij}(v-v_*,\sigma)\,F_j^{n}(t,x,v_*)\,d\sigma\,dv_* .
			\end{align*}
			Since $F_{0,i}\ge 0$ and $Q_{+,ij}(F_i^{n},F_j^{n})\ge 0$, it follows that $F_i^{n+1}\ge 0$.
			With the notation $h_i^n=h_i^n(t,x,v) = w(v)f_i^n(t,x,v)$, we can rewrite \eqref{itera} as follows:
			\begin{align}\label{iter2}
				\begin{split}
					\left\{
					\begin{aligned}
						\partial_th^{n+1}_i+ v\cdot \nabla_xh^{n+1}_i+ \nu_ih^{n+1}_i\,
						&=K_{w,i} (\pmb{h}^n) + \sum_{j=1}^N w(v)\Gamma^+_{ij}(f_i^{n},f_j^{n})
- \sum_{j=1}^N w(v)\Gamma^-_{ij}(f_i^{n+1},f_j^{n}), \\ 
						h^{n+1}_i(0,x,v) &= h_{0,i}(x,v), \qquad h^{0}\equiv0 .
					\end{aligned}
					\right.
				\end{split}
			\end{align}
			We establish the existence of a positive time $\hat{t}$ such that the solution to
			\eqref{iter2} exists on $[0,\hat{t}]$ and obeys the uniform estimate
			\begin{align}\label{local h}
				\sup_{0\leq t\leq \hat t}\|\pmb{h}^{n}(t)\|_{L^\infty}=\sup_{0\le t\le \hat{t}}\sum_{i=1}^N\|h^n_i(t)\|_{L^\infty}
				\le 2\|\pmb{h}_{0}\|_{L^\infty}.
			\end{align}
			For the step $n=1$, the representation $h^1(t,x,v)=e^{-\nu_i(v)t} h_{0,i}(x-tv,v)$
			immediately yields \eqref{local h}.
			Assume that \eqref{local h} holds for $n$.
			Applying Duhamel’s formula and using the estimates in Lemma \ref{K est} and Corollary \ref{nonlinear est} for $K_w$ and $w\Gamma$, we obtain  
			\begin{align*}
				\|h^{n+1}_i(t)\|_{L^\infty}
				&\le \|h_{0,i}\|_{L^\infty} \\
				&\quad + \int_{0}^{t}
				\Big(
				\|K_{w,i} (\pmb{h}^n)(s)\|_{L^\infty}
				+ \sum_{j=1}^N \|w\Gamma^+(f_i^{n},f_j^{n})(s)\|_{L^\infty}
				+\sum_{j=1}^N \|w\Gamma^-(f_i^{n+1},f_j^{n})(s)\|_{L^\infty}
				\Big)\,ds \\[1mm]
				&\le \|\pmb{h}_{0}\|_{L^\infty}
				+ C\hat{t} \Big(
				\sup_{0\le s\le \hat{t}}\|\pmb{h}^{n}(s)\|_{L^\infty} +\sup_{0\le s\le \hat{t}}\|\pmb{h}^{n}(s)\|_{L^\infty}^2 
				+ \sup_{0\le s\le \hat{t}}\|\pmb{h}^{n}(s)\|_{L^\infty}
				\sup_{0\le s\le \hat{t}}\|\pmb{h}^{n+1}(s)\|_{L^\infty}
				\Big).
			\end{align*}
			Summing over $i$ and taking the supremum in time, we obtain
			\begin{align*}
				\sup_{0\le t\le \hat{t}}\|\pmb{h}^{n+1}(t)\|_{L^\infty}\leq& \|\pmb{h}_{0}\|_{L^\infty}\\
				&+C\hat{t}\times\bigg(
				\sup_{0\le s\le \hat{t}}\|\pmb{h}^{n}(s)\|_{L^\infty} +\sup_{0\le s\le \hat{t}}\|\pmb{h}^{n}(s)\|_{L^\infty}^2 
				+ \sup_{0\le s\le \hat{t}}\|\pmb{h}^{n}(s)\|_{L^\infty}
				\sup_{0\le s\le \hat{t}}\|\pmb{h}^{n+1}(s)\|_{L^\infty} \bigg) \cr
				\leq& \|\pmb{h}_{0}\|_{L^\infty}+C\hat{t}\left(	2\|\pmb{h}_{0}\|_{L^\infty}+4\|\pmb{h}_{0}\|^2_{L^\infty}+2\|\pmb{h}_{0}\|_{L^\infty}\sup_{0\leq t\leq \hat t}\|\pmb{h}^{n+1}(t)\|_{L^\infty}\right)
			\end{align*}
			Choosing $\hat{t}$ sufficiently small such that 
			\[
			\hat t \le \min\left\{\frac{2}{3C\bigl(2\|\pmb{h}_0\|_{L^\infty}+1\bigr)},\ 
			\frac{1}{6C\|\pmb{h}_0\|_{L^\infty}}\right\},
			\]
			we can absorb the last term to obtain 
			\begin{align*}
				\frac{2}{3}\sup_{0\le s\le \hat{t}}\|\pmb{h}^{n+1}(s)\|_{L^\infty}
				\le \frac{4}{3}\|\pmb{h}_0\|_{L^\infty}.
			\end{align*}
			Hence,
			\begin{align}\label{local hk}
				\sup_{0\le s\le \hat{t}}\|\pmb{h}^{n+1}(s)\|_{L^\infty}
				\le 2\|\pmb{h}_0\|_{L^\infty}.
			\end{align}
			By applying the same argument and using the following property for the nonlinear term $\Gamma$
			\[
			\Gamma(g-h,f)=\Gamma(g,f)-\Gamma(h,f),
			\]
			we obtain
			\begin{align*}
				\sup_{0\le s\le \hat{t}}\|\pmb{h}^{n+1}-\pmb{h}^{n}\|_{L^\infty}\leq \frac{1}{2}\sup_{0\le s\le{\hat{t}}}\|\pmb{h}^{n}-\pmb{h}^{n-1}\|_{L^\infty}.
			\end{align*}
This implies that $\{\pmb{h}^n\}_{n\ge1}$ is a Cauchy sequence.
			Thus, it converges to a limit $\pmb{h}$ in $L^\infty$.	Moreover, $F^n_i \to F_i$ as $n \to \infty$ for each $i\in \{1,\cdots N\}$. 
			Since each $F^n_i$ is non-negative, the limit function $F_i$ is also non-negative on 
			$t \in [0,\hat{t}]$. 
			Therefore, passing to the limit in \eqref{local hk} as $n \to \infty$, we obtain 
			\begin{align*}
				\sup_{0\leq t\leq \hat{t}}\|\pmb{h}(t)\|_{L^\infty}\leq 2\|\pmb{h}_0\|_{L^\infty}.
			\end{align*}
			
					\end{proof}
		\subsection{Global-in-Time Solution in $L^\infty$}
		Having established the local-in-time existence, we extend the solution globally in time via a continuation argument based on uniform $L^\infty$ estimates.
\begin{proof}[Proof of Theorem \ref{thm:main}] We first recall the estimate established in Theorem \ref{Linfty}.
By \eqref{h_final_bound}, we have
		\begin{align}\label{main revised}
			\Vert \pmb{h}(t) \Vert_{L^{\infty}}
			\leq
			C_{q,\gamma} e^{2\lambda_0 \tilde{t}} e^{-\frac{\lambda_0}{2} t}
			\Vert \pmb{h}_0 \Vert_{L^\infty}
			\left(1 + \int_{0}^{t} \Vert \pmb{h}(s) \Vert_{L^\infty} \, ds\right)
			+ D,
		\end{align}
		where $D$ is given in \eqref{D}. Let 
		\begin{align*}
		Z(t) := 1 + \int_0^t \Vert \pmb{h}(s) \Vert_{L^{\infty}} \, ds.
		\end{align*}
		By differentiating $Z(t)$ with respect to time and using Theorem \ref{Linfty}, together with the assumption $\|\pmb{h}_0\|_{L^\infty} \le M_0$, we obtain
		\begin{align*}
			Z'(t) \leq C_{q,\gamma} M_0 e^{2\lambda_0 \tilde{t}} e^{-\frac{\lambda_0}{2} t} Z(t) + D.
		\end{align*}
		Multiplying both sides by the integrating factor
		 \begin{align*}
		 \exp \left\{ \frac{2 C_{q,\gamma} M_0}{\lambda_0} e^{2\lambda_0 \tilde{t}} (e^{-\frac{\lambda_0}{2} t} - 1) \right\},
		 \end{align*}
		  we deduce
		\begin{align*}
			\left( Z(t) \exp \left\{ \frac{2 C_{q,\gamma} M_0}{\lambda_0} e^{2\lambda_0 \tilde{t}} (e^{-\frac{\lambda_0}{2} t} - 1) \right\} \right)' 
			\leq D \exp \left\{ \frac{2 C_{q,\gamma} M_0}{\lambda_0} e^{2\lambda_0 \tilde{t}} (e^{-\frac{\lambda_0}{2} t} - 1) \right\} \leq D.
		\end{align*}
		Integrating over $[0, t]$ yields
		\begin{align}\label{proof est 1}
			Z(t) \leq (1 + Dt) \exp \left\{ \frac{2 C_{q,\gamma} M_0}{\lambda_0} e^{2\lambda_0 \tilde{t}} (1 - e^{-\frac{\lambda_0}{2} t}) \right\} 
			\leq (1 + Dt) \exp \left\{ \frac{2 C_{q,\gamma}M_0}{\lambda_0} e^{2\lambda_0 \tilde{t}} \right\}.
		\end{align}
		Substituting estimate \eqref{proof est 1} into \eqref{main revised} gives  
		\begin{equation}\label{proof est 2}
			\Vert \pmb{h}(t) \Vert_{L^\infty}
			\le C_{q,\gamma} M_0 e^{2\lambda_0 \tilde{t} + \frac{2 C_{q,\gamma} M_0}{\lambda_0} e^{2\lambda_0 \tilde{t}}} (1 + Dt) e^{-\frac{\lambda_0}{2} t} + D.
		\end{equation}
	We now define 
	\begin{equation}\label{Mbar}
		\bar{M} := 4 C_{q,\gamma} M_0 e^{2\lambda_0 \tilde{t} + \frac{2 C_{q,\gamma} M_0}{\lambda_0} e^{2\lambda_0 \tilde{t}}}.
	\end{equation}
	We also set
	\begin{equation}\label{T0}
		T_0 := \frac{4}{\lambda_0} \ln(\bar{M}/\kappa),
	\end{equation}
		where $\kappa>0$ is the constant given in Theorem \ref{thm:main2}, representing the smallness condition on the initial data. For $0 \le t \le T_0$, the estimate \eqref{proof est 2} together with \eqref{Mbar} yields
		\begin{align*}
			\Vert \pmb{h}(t) \Vert_{L^\infty} 
			\le \frac{1}{4} \bar{M} (1 + Dt) e^{-\frac{\lambda_0}{2} t} + D 
			\le \frac{1}{4} \bar{M} (1 + 2D) e^{-\frac{\lambda_0}{4} t} + D,
		\end{align*}
		where we used the fact that $t e^{-\frac{\lambda_0}{4} t}$ is uniformly bounded for $t \ge 0$. Recall from \eqref{D} that
\begin{align*}
D= C_{q,\gamma,\tilde{t}}
\left(\frac{1}{M^{\frac{1}{5}}}+\delta\right)
\left(1+\sup_{0\le s\le t}\|\pmb{h}(s)\|_{L^\infty}\right)^3 + C_{q,\gamma,M,\delta,\tilde{t}}\left[\mathcal{E}(\pmb{F}_0)^{\frac{2}{25}}
+\mathcal{E}(\pmb{F}_0)^{\frac{1}{5}}\right]\left(1+\sup_{0\le s\le t}\|\pmb{h}(s)\|_{L^\infty}\right)^3.
\end{align*}
Using the a priori bound 
\[
\sup_{0\le s\le t}\|\pmb{h}(s)\|_{L^\infty}\le \bar M,
\]
we first choose $M\geq 1$ sufficiently large and $\delta>0$ sufficiently small, depending on $q,\gamma,\tilde t$ and $\bar M$, so that
\[
C_{q,\gamma,\tilde{t}}
\left(\frac{1}{M^{\frac{1}{5}}}+\delta\right)(1+\bar M)^3
\le \frac12 \min\left\{\frac{\bar M}{8},\frac{\kappa}{4},\frac14\right\}.
\]
Then, for these fixed $M$ and $\delta$, we choose
\[
\e_2=\e_2(M,\delta,\bar M,\tilde t,\kappa)>0
\]
sufficiently small so that, whenever
\[
\mathcal{E}(\pmb{F}_0)\le \e_2,
\]
we have
\[
C_{q,\gamma,M,\delta,\tilde{t}}
\left[
\mathcal{E}(\pmb{F}_0)^{\frac{2}{25}}
+\mathcal{E}(\pmb{F}_0)^{\frac{1}{5}}
\right](1+\bar M)^3
\le \frac12 \min\left\{\frac{\bar M}{8},\frac{\kappa}{4},\frac14\right\}.
\]
Consequently,
\[
D \le \min\left\{\frac{\bar M}{8},\frac{\kappa}{4},\frac14\right\}.
\]
Therefore, from the previous estimate, we obtain the closed a priori bound
		\begin{equation}\label{proof est 3}
			\Vert \pmb{h}(t) \Vert_{L^\infty} \le \frac{3}{8} \bar{M} e^{-\frac{\lambda_0}{4} t} + \frac{\kappa}{4}  \le \frac{1}{2} \bar{M}, \qquad 0 \le t \le T_0.
		\end{equation}
		\indent To extend the solution up to time $T_0$, we use Lemma \ref{local in time}. 
		Since $\|\pmb{h}_0\|_{L^\infty} \le  M_0 \leq \frac{\bar{M}}{4}$, there exists a local existence time $\hat{t}_0= \hat{t}(M_0) > 0$ such that the solution exists on $[0, \hat{t}_0]$
		satisfying 
		\[
		\sup_{0 \le t \le \hat{t}_0} \|\pmb{h}(t)\|_{L^\infty} \le 2\|\pmb{h}_0\|_{L^\infty} \le \frac{\bar{M}}{2}.
		\]
		Assume that $\|\pmb{h}(t)\|_{L^\infty} \le \bar{M}$ holds for $t \in [0,\hat{t}_0]$. 
Then, by \eqref{proof est 3}, we actually have
\[
\|\pmb{h}(t)\|_{L^\infty} \le \frac{\bar{M}}{2}.
\]
This uniform bound allows us to restart the local existence argument at $t = \hat{t}_0$ with the same local time increment $\hat{t}_1:=\hat{t}(\frac{\bar{M}}{2})$. Specifically, since $\|\pmb{h}(\hat{t}_0)\|_{L^\infty} \le \bar{M}/2$, the solution can be extended to the interval $[\hat{t}_0, \hat{t}_0+\hat{t}_1]$ satisfying 
\begin{align*}
\|\pmb{h}(t)\|_{L^\infty} \le \bar{M}, \quad \forall t\in [\hat{t}_0, \hat{t}_0+\hat{t}_1].
\end{align*}		
By repeating this procedure finitely many times, we extend the solution to the entire interval $[0,T_0]$.  Throughout this process, the a priori estimate \eqref{proof est 3} ensures that the $L^\infty$ norm never exceeds $\bar{M}/2$, thereby preventing the solution from blowing up and guaranteeing that the local existence time does not shrink to zero. Hence, this establishes the existence, uniqueness, and the uniform bound \eqref{proof est 3} on the entire interval $[0, T_0]$.\\
		\indent By \eqref{proof est 2} with $t=T_0$ and \eqref{T0}, we obtain
		\[
		\Vert \pmb{h}(T_0)\Vert_{L^\infty}
		\le 
		\frac{3}{8}\kappa + \frac14 \kappa
		< \kappa,
		\]
		where we have used $D \le \kappa/4$.  
		Therefore, we can apply Theorem \ref{thm:main2} to obtain global existence and exponential decay:
		\begin{align} \label{proof est 4} 
		\Vert \pmb{h}(t)\Vert_{L^\infty}
		\le 
		C e^{-\lambda_1 (t-T_0)} 
		\Vert \pmb{h}(T_0)\Vert_{L^\infty} \leq C \kappa e^{-\lambda_1(t-T_0)} ,
		\qquad t \ge T_0,
		\end{align}
		where $\lambda_1$ is given in Theorem \ref{thm:main2}. Hence, by choosing $\vartheta= \min  \left \{ \lambda_1, \frac{\lambda_0}{4} \right \}$, we deduce from \eqref{tilde_t},\eqref{Mbar},\eqref{proof est 3}, and \eqref{proof est 4} that 
		\begin{align*}
			\Vert \pmb{h}(t) \Vert_{L^\infty} &\leq C\bar{M} e^{-\vartheta t} \\
			&\leq C_{q,\gamma} M_0 e^{2\lambda_0 \tilde{t} + \frac{2 C_{q,\gamma} M_0}{\lambda_0} e^{2\lambda_0 \tilde{t}}}e^{-\vartheta t}\\
			&\leq C_{q,\gamma} M_0^2 \exp \left\{ \frac{C_{q,\gamma,\nu}M_0^2}{\lambda_1}\right\} e^{-\vartheta t} , \quad t\geq 0,
		\end{align*}
		where $C_{q,\gamma,\nu}:= C_{q,\gamma} C_{\nu,1}>0$, and we used $\lambda_1 \leq \lambda_0$. 
		\end{proof}
		\noindent{\bf Data availability:} No data was used for the research described in the article.
\newline

\noindent{\bf Conflict of interest:} The authors declare that they have no conflict of interest.\newline

\noindent{\bf Acknowledgement}
G.Ko is supported by the National Natural Science Foundation of China (No. 12288201). M.-S. Lee was supported by Basic Science Research Programs through the National Research Foundation of Korea (NRF) funded by the Ministry of Education (RS-2023-00244475 and RS-2024-00462755). S.-J. Son is
supported by the National Research Foundation of Korea(NRF) grant funded by the Korea government(MSIT
and MOE)(No.RS-2023-00219980 and No.RS-2025-25419038).

	\end{document}